\documentclass[12pt,twoside]{amsart}
\usepackage{amssymb,amsmath,amsthm, amscd, enumerate, mathrsfs, upgreek}
\usepackage{graphicx, hhline, tikz}
\usepackage[colorlinks=true,pagebackref,hyperindex]{hyperref}
\usepackage[all]{xy}
\usepackage{color}
\usepackage[backrefs, alphabetic, initials]{amsrefs}
\usepackage{color} 
\usepackage{tikz-cd}
\usepackage{latexsym}
\usepackage[T1]{fontenc} 
\usepackage{fancyhdr}
\usepackage{enumerate}
\usepackage{array, longtable}
\usepackage{caption}
\usepackage{mathtools}
\usepackage{cancel}

\newcolumntype{C}{>{$}c<{$}}
\newcolumntype{L}{>{$}l<{$}}

\title[Optimal upper bound for degrees of canonical $\mathbb{Q}$-Fano $3$-folds]
{Optimal upper bound for degrees of canonical Fano threefolds of Picard number one}

\date{\today, version 0.02}
\subjclass[2020]{Primary 14J45; Secondary 14J30, 14J10, 14E30}
\keywords{degree, Fano index, canonical Fano threefold, Kawamata--Miyaoka type inequality}

\author{Chen Jiang}
\address{Chen Jiang, Shanghai Center for Mathematical Sciences \& School of Mathematical Sciences, Fudan University, Shanghai 200438, China}
\email{\href{chenjiang@fudan.edu.cn}{chenjiang@fudan.edu.cn}}
\urladdr{\href{https://chenjiangfudan.github.io/home/index.html}{https://chenjiangfudan.github.io/home/index.html}}

\author{Haidong Liu}
\address{Haidong Liu, Sun Yat-Sen University, School of Mathematics, Guangzhou 510275, China}
\email{\href{liuhd35@mail.sysu.edu.cn}{liuhd35@mail.sysu.edu.cn},\href{jiuguiaqi@gamil.com}{jiuguiaqi@gmail.com}}
\urladdr{\href{https://sites.google.com/view/liuhaidong}{https://sites.google.com/view/liuhaidong}}

\author{Jie Liu}
\address{Jie Liu, Institute of Mathematics, Academy of Mathematics and Systems Science, Chinese Academy of Sciences, Beijing 100190, China}
\email{\href{jliu@amss.ac.cn}{jliu@amss.ac.cn}}
\urladdr{\href{http://www.jliumath.com}{http://www.jliumath.com}}

\DeclareMathOperator{\Supp}{Supp}

\DeclareMathOperator{\rank}{rank}

\DeclareMathOperator{\mult}{mult}

\DeclareMathOperator{\reg}{reg}
\DeclareMathOperator{\Sing}{Sing}

\DeclareMathOperator{\Cl}{Cl}

\newcommand{\qW}{\text{\rm q}_{\text{\rm W}}}
\newcommand{\qQ}{\text{\rm q}_{\mathbb{Q}}}

\newcommand\lcm{{\text{l.c.m.}}}


\newtheorem{thm}{Theorem}[section]
\newtheorem{lem}[thm]{Lemma}
\newtheorem{prop}[thm]{Proposition}

\newtheorem{cor}[thm]{Corollary}

\theoremstyle{definition}
\newtheorem{ex}[thm]{Example}
\newtheorem{defn}[thm]{Definition}
\newtheorem{rem}[thm]{Remark}

\newtheorem{case}{Case}


\makeatletter
 
 \@addtoreset{equation}{section}
\makeatother

\setlength{\topmargin}{-1cm}
\setlength{\oddsidemargin }{-1pt}
\setlength{\evensidemargin }{-1pt}
\setlength{\textwidth}{460pt}
\setlength{\textheight}{24cm}
\setcounter{tocdepth}{1}
\begin{document}

\begin{abstract}
We show that for a $\mathbb Q$-factorial canonical Fano $3$-fold $X$ of Picard number $1$, $(-K_X)^3\leq 72$. The main tool is a Kawamata--Miyaoka type inequality which relates $(-K_X)^3$ with $\hat{c}_2(X)\cdot c_1(X)$, where $\hat{c}_2(X)$ is the generalized second Chern class.
 \end{abstract}

\maketitle 
\tableofcontents

\section{Introduction}\label{sec1}

Throughout this paper, we work over the complex number field $\mathbb C$. We will freely use the basic notation in \cites{KollarMori1998}.

A normal projective variety $X$ is called a {\it Fano variety} (resp. {\it weak Fano variety}) if the anti-canonical divisor $-K_X$ is ample (resp. nef and big). According to the minimal model program, Fano varieties with mild singularities form a fundamental class among research objects of birational geometry. Motivated by the classification theory of $3$-dimensional algebraic varieties, we aim to study the explicit geometry of Fano $3$-folds. 

Given a Fano $3$-fold $X$, we are interested in the {\it (anti-canonical) degree} $(-K_X)^3$ of $X$. This is an important invariant of Fano $3$-folds and it plays a key role in the classification of smooth Fano $3$-folds (see \cite{IskovskikhProkhorov1999}). On the other hand, the classification of canonical Fano $3$-folds is a wildly open problem and very few results are known. So it is quite crucial to understand the behavior of degrees of canonical Fano $3$-folds.

In this direction, Prokhorov \cite{Prokhorov2005} showed that for a Gorenstein canonical Fano $3$-fold $X$, $(-K_X)^3\leq 72$ and this bound is optimal. The first author and Yu Zou \cite{JiangZou2023} showed that for a canonical Fano $3$-fold $X$, $(-K_X)^3\leq 324$ (conjecturally this upper bound should be $72$). The main goal of this paper is to give an optimal upper bound for degrees of $\mathbb Q$-factorial canonical Fano $3$-folds of Picard number $1$.

\begin{thm}[{=Theorem~\ref{thm.sharpboundofdegree}}]
 Let $X$ be a $\mathbb Q$-factorial canonical Fano $3$-fold of Picard number $1$. Then $(-K_X)^3\leq 72$ and the equality holds if and only if $X\cong \mathbb P(1,1,1,3)$ or $\mathbb P(1,1,4,6)$.
\end{thm}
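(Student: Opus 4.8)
The plan is to deduce both the bound and the classification from two inputs: the Kawamata--Miyaoka type inequality
\[
 (-K_X)^3 \leq 3\,\hat{c}_2(X)\cdot c_1(X),
\]
which is the main tool established earlier, and an upper bound $\hat{c}_2(X)\cdot c_1(X)\leq 24$ coming from Riemann--Roch. Granting both, one gets $(-K_X)^3\leq 3\cdot 24=72$ at once, and in the equality case both inequalities become equalities; in particular $\hat{c}_2(X)\cdot c_1(X)=24$. The strategy for the equality case is then to show that this numerical equality forces $X$ to be Gorenstein, and to invoke Prokhorov's classification \cite{Prokhorov2005} of Gorenstein canonical Fano $3$-folds of maximal degree.

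For the Riemann--Roch bound I would first record that $X$ has klt (indeed canonical) singularities and that $-K_X$ is ample, so Kawamata--Viehweg vanishing gives $H^i(X,\mathcal{O}_X)=0$ for $i>0$ and hence $\chi(\mathcal{O}_X)=1$. I would then express $24\,\chi(\mathcal{O}_X)$ through a singular (orbifold/Reid type) Riemann--Roch formula as $\hat{c}_2(X)\cdot c_1(X)$ corrected by local contributions concentrated at the non-Gorenstein points of $X$. The crucial feature is that these contributions form a sum of \emph{strictly positive} terms, of the shape $\sum_{Q}\bigl(r_Q-\tfrac{1}{r_Q}\bigr)$ over the singular basket; consequently
\[
 \hat{c}_2(X)\cdot c_1(X)=24-\text{(positive correction)}\leq 24,
\]
with equality if and only if the correction vanishes, i.e.\ if and only if $X$ is Gorenstein. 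This both yields the inequality and pins down its equality locus.

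Assembling the two inputs gives $(-K_X)^3\leq 72$. If equality holds, then $\hat{c}_2(X)\cdot c_1(X)=24$, so by the previous step $X$ is a Gorenstein canonical Fano $3$-fold with $(-K_X)^3=72$. By Prokhorov's theorem such an $X$ is isomorphic to $\mathbb{P}(1,1,1,3)$ or $\mathbb{P}(1,1,4,6)$. Conversely I would check directly that both are $\mathbb{Q}$-factorial canonical Fano $3$-folds of Picard number $1$: each is a well-formed weighted projective space, Gorenstein because every weight divides the sum of the weights, and of degree $(-K)^3=(\sum_i a_i)^3/\prod_i a_i$, which equals $6^3/3=72$ and $12^3/24=72$ respectively.

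I expect the genuine difficulty to lie in the second paragraph rather than in the assembly. First, one must make the Riemann--Roch bound rigorous for \emph{arbitrary} canonical (not necessarily terminal or quotient) singularities, which requires the generalized second Chern class $\hat{c}_2(X)$ to be matched correctly with the class appearing in the singular Riemann--Roch formula, and the sign of each local correction to be controlled; this is where the comparison between $\hat{c}_2(X)$ and the Chern data of a terminalization must be handled with care. Second, one must ensure that the generalized class $\hat{c}_2(X)$ used in the Kawamata--Miyaoka inequality is literally the same as the one appearing in the Riemann--Roch bound, so that the two inputs can be combined verbatim. The deep analytic content, namely the Kawamata--Miyaoka inequality itself, is isolated in the main tool, so that the proof of the present statement reduces to these compatibility and positivity checks together with the appeal to Prokhorov's classification for the equality case.
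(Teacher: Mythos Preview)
There is a genuine gap: the inequality $(-K_X)^3\leq 3\,\hat c_2(X)\cdot c_1(X)$ is \emph{not} the Kawamata--Miyaoka type inequality established in the paper, and in fact it is false in the generality you need. The paper only proves $c_1(X)^3<4\,\hat c_2(X)\cdot c_1(X)$ from Theorem~\ref{thm.KMinqEpsilonLC} (taking $\epsilon=1$), and the refined Theorem~\ref{thm.kmineqfor3fold} gives constants $16/5$ or $4q^2/(q^2+2q-4)$, all strictly larger than $3$. The constant $3$ would follow from Bogomolov--Gieseker \emph{if} $\mathcal T_X$ were semistable, but this is not assumed and is generally false. Concretely, $X=\mathbb P(1,1,4,6)$ has a curve of $A_1$ singularities (along $x_0=x_1=0$), so by Theorem~\ref{thm.diffcandgc} one has $\hat c_2(X)\cdot c_1(X)<c_2(X)\cdot c_1(X)=24$, while $(-K_X)^3=72$; thus your proposed inequality with constant $3$ is violated by one of the very varieties that must appear in the equality case. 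Relatedly, your Riemann--Roch step conflates $c_2$ and $\hat c_2$: the basket formula \eqref{eq.range} concerns $c_2(X)\cdot c_1(X)$, and the passage to $\hat c_2$ introduces a \emph{second} nonnegative correction coming from singular curves, so $\hat c_2(X)\cdot c_1(X)=24$ forces not merely Gorenstein but also isolated singularities.

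Because the available constant is only about $16/5$, the paper cannot conclude directly and instead runs a substantially more delicate argument: it splits according to the $\mathbb Q$-Fano index $q=\qQ(X)$, uses Theorem~\ref{thm.kmineqfor3fold} together with Reid's formula \eqref{eq.range} to bound $\sum(r_i-1/r_i)$ and hence pin down the basket $\mathcal R_X$ to a short finite list, invokes the integrality constraints of Theorem~\ref{thm.key} relating $r_X(-K_X)^3$ to $q$ and to the codimension-$2$ Cartier index $J_A$, and then eliminates all remaining non-Gorenstein candidates case by case (Tables~\ref{tab1}--\ref{tab4}) using the sharper inequality \eqref{eq upper J2}. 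Only after this exhaustion does the Gorenstein case remain, where Prokhorov's theorem applies. Your overall architecture (KM inequality plus Riemann--Roch plus Prokhorov) is the right shape, but the numerical slack between $3$ and $16/5$ is precisely what forces all of this additional work.
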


To prove this result, the main idea is to establish a Kawamata--Miyaoka type inequality which relates $(-K_X)^3$ with $\hat{c}_2(X)\cdot c_1(X)$, where $\hat{c}_2(X)$ is the generalized second Chern class (or orbifold second Chern class), see Theorem~\ref{thm.kmineqfor3fold} for details. We prove a more general Kawamata--Miyaoka type inequality for $\mathbb Q$-factorial $\epsilon$-lc Fano varieties of Picard number $1$, generalizing previous results \cites{Liu2019a,LiuLiu2023, LiuLiu2024}.

\begin{thm}\label{thm.KMinqEpsilonLC}
Let $0< \epsilon\leq 1$ be a real number. Let $X$ be a $\mathbb Q$-factorial $\epsilon$-lc Fano variety of dimension $n\geq 2$ and of Picard number $1$. Then we have
 \[
 c_1(X)^{n}<\frac{2(1+\epsilon)}{\epsilon} \hat{c}_2(X)\cdot c_1(X)^{n-2}.
 \]
\end{thm}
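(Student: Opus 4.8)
The plan is to translate the inequality into a statement about the discriminant of the tangent sheaf and then into a bound on the slopes of its Harder--Narasimhan factors. Write $H=c_1(X)=-K_X$ and $d=c_1(X)^n$, and set $\Delta(T_X)=2n\,\hat c_2(X)-(n-1)c_1(X)^2$. Since $c_1(T_X)=c_1(X)$, one has $\Delta(T_X)\cdot c_1(X)^{n-2}=2n\,\hat c_2(X)\cdot c_1(X)^{n-2}-(n-1)d$, and a direct rearrangement shows that the asserted inequality is equivalent to
\[
\Delta(T_X)\cdot c_1(X)^{n-2}>\frac{\epsilon-(n-1)}{1+\epsilon}\,d,
\]
whose right-hand side is non-positive for $0<\epsilon\le 1$ and $n\ge 2$; so there is room to spare, and the real content is to keep $\Delta(T_X)\cdot c_1(X)^{n-2}$ from being too negative.

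Next I would take the Harder--Narasimhan filtration of $T_X$ with respect to $H$, with semistable quotients $Q_i$ of rank $r_i$ and slope $\mu_i=(c_1(Q_i)\cdot H^{n-1})/r_i$, so that $\sum_i r_i=n$ and $\sum_i r_i\mu_i=c_1(X)\cdot H^{n-1}=d$. The crucial simplification is that $\rho(X)=1$ forces every $c_1(Q_i)$ to be numerically proportional to $H$; hence the additivity of the discriminant along the filtration collapses to
\[
\Delta(T_X)\cdot H^{n-2}=n\sum_i\frac{\Delta(Q_i)\cdot H^{n-2}}{r_i}-\frac{1}{d}\Big(n\sum_i r_i\mu_i^2-d^2\Big).
\]
Each $Q_i$ is semistable, so the Bogomolov--Gieseker inequality (in the generalized Chern class setting) gives $\Delta(Q_i)\cdot H^{n-2}\ge 0$. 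Combining this with the reformulation above, the theorem reduces to the single clean slope estimate
\[
\sum_i r_i\mu_i^2<\frac{d^2}{1+\epsilon}.
\]

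To prove this estimate I would exploit the constraint $\sum_i r_i\mu_i=d$ together with the chord bound for $x\mapsto x^2$ on the interval $[\mu_{\min},\mu_{\max}]$, which yields $\sum_i r_i\mu_i^2\le(\mu_{\max}+\mu_{\min})d-n\,\mu_{\max}\mu_{\min}$. It therefore suffices to bound $\mu_{\max}=\mu_{\max}(T_X)$ from above and $\mu_{\min}=\mu_{\min}(T_X)$ from below. The lower bound $\mu_{\min}\ge 0$ follows from the generic nefness (positivity) of the tangent sheaf of a Fano variety. The upper bound on $\mu_{\max}$ is where the hypothesis $\epsilon$-lc is indispensable: the maximal destabilizing subsheaf $\mathcal F\subseteq T_X$ has positive slope $\mu(\mathcal F)=\mu_{\max}>0$, hence is a foliation (saturated and closed under the Lie bracket) which is algebraically integrable with rationally connected general leaves; applying foliation adjunction to the family of leaves and controlling the discrepancies through the $\epsilon$-lc condition produces an explicit upper bound on $\mu_{\max}$ in terms of $\epsilon$ and $d$. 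Feeding both bounds into the chord estimate gives $\sum_i r_i\mu_i^2<d^2/(1+\epsilon)$.

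I expect the upper bound on $\mu_{\max}$ to be the main obstacle, since it requires converting a purely numerical slope inequality into the geometry of the destabilizing foliation and then back into a numerical bound that is uniform in the $\epsilon$-lc discrepancies. A secondary but genuine point is strictness: the chain above is equality-preserving, so one must argue that the final inequality is strict even in the borderline (near-)semistable situations; here strictness should be recovered from the strict form of Bogomolov--Gieseker, equality in which would force a projectively flat structure incompatible with $X$ being Fano.
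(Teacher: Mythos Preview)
Your outline is essentially the paper's proof. The chord bound combined with HN/Bogomolov--Gieseker is precisely Langer's inequality (Theorem~\ref{thm.LangerIneqII}), and the paper then proceeds exactly as you describe: $\mu_{\min}(\mathcal T_X)>0$ from \cite[Proposition~3.6]{LiuLiu2023}, an upper bound $\mu_{\max}(\mathcal T_X)\le d/(1+\epsilon)$, and strictness in the semistable case via \cite{GrebKebekusPeternell2021}. Two small fixes: your displayed ``additivity'' for $\hat\Delta(T_X)$ is only the inequality $\ge$, since $\hat c_2$ is merely super-additive in short exact sequences on klt varieties (but that direction is the one you need); and you need $\mu_{\min}>0$, not $\ge 0$, to get strictness from the chord bound in the unstable case.

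On the $\mu_{\max}$ bound you correctly flag as the crux: the paper splits according to $r=\operatorname{rank}\mathcal F$. When $r\ge 2$, the ampleness of $c_1(\mathcal T_X)-c_1(\mathcal F)$ (again \cite[Proposition~3.6]{LiuLiu2023}) already gives $\mu_{\max}<d/r\le d/2\le d/(1+\epsilon)$ with no foliation input and no use of $\epsilon$-lc. The $\epsilon$-lc hypothesis enters only when $r=1$: here the general leaf is $\mathbb P^1$, and the paper shows (Propositions~\ref{prop.ExcDivFamLeaves} and \ref{prop.SlopeRkOneSubsheaf}) that every $p$-horizontal $e$-exceptional divisor in the family of leaves appears in the foliation discrepancy divisor with coefficient $\ge 1$, forcing there to be exactly one such divisor $E$ with $E\cdot F=1$; comparing $-e^*K_X\cdot F\ge 1+\epsilon$ against $-e^*K_{\mathcal F}\cdot F\le 1$ then yields $c_1(X)-(1+\epsilon)c_1(\mathcal F)$ nef. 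This rank-$1$ computation is the real content behind your ``foliation adjunction'' sentence.
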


\section{Preliminaries}
Let $X$ be a normal variety of dimension $n$ such that its canonical divisor $K_X$ is $\mathbb Q$-Cartier. Then the \emph{Gorenstein index} $r_X$ of $X$ is defined as the smallest positive integer $m$ such that $mK_X$ is Cartier.
 
\subsection{Singularities}
Let $X$ be a normal variety such that $K_X$ is $\mathbb Q$-Cartier and let $f\colon Y\to X$ be a proper birational morphism. A prime divisor $E$ on $Y$ is called \emph{a divisor over} $X$. 
Write
\[
 K_Y=f^*K_X+\sum_Ea(E,X)E,
\]
where $a(E,X)\in \mathbb Q$ is called the \emph{discrepancy} of $E$. We say that $X$ has \emph{terminal} (resp. \emph{canonical}, \emph{klt}, \emph{$\epsilon$-lc} for some fixed $0\leq \epsilon\leq 1$) singularities if $a(E,X)>0$ (resp. $a(E,X)\geq 0$, $a(E,X)>-1$, $a(E,X)\geq \epsilon-1$) for any exceptional divisor $E$ over $X$.
Often we just simply say that $X$ is 
{\it terminal, canonical, klt, or $\epsilon$-lc}, respectively.



\subsection{Fano indices}\label{subsec.fi}

 Let $X$ be a klt weak Fano variety. 
 We can define 
 \begin{align*} 
 \qW(X) {}&:=\max\{q \mid -K_X \sim qB, \quad B\in \Cl (X) \};\\
 \qQ(X) {}&:=\max\{q \mid -K_X \sim_{\mathbb Q}qA, \quad A\in \Cl (X) \}. 
 \end{align*}
 We call $\qW(X)$ the \emph{Weil--Fano index} of $X$ and $\qQ(X)$ the \emph{$\mathbb Q$-Fano index} of $X$. 
 It is known that $\Cl (X)$ is a finitely generated Abelian group, so $\qW(X)$ and $\qQ(X)$ are positive integers and $\qW(X)\mid \qQ (X)$; moreover, $\qW(X)= \qQ (X)$ if $\Cl (X)$ is torsion-free. 
 For more details, see \cite[\S\,2]{IskovskikhProkhorov1999} or \cite{Prokhorov2010}.

Here we recall the following easy lemma.
\begin{lem}\label{lem ex torsion}
    Let $X$ be a klt weak Fano variety. Suppose that $p$ is a prime factor of $\qQ(X)$ such that $p\nmid \qW(X)$, then there is a torsion element in $\Cl (X)$ with order $p$.
\end{lem}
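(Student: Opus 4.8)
The plan is to translate the statement into a purely group-theoretic assertion about the finitely generated abelian group $G := \Cl(X)$ and the element $g := -K_X \in G$. Writing $\overline{G} := G/G_{\mathrm{tors}}$ for the free quotient and $\bar g$ for the image of $g$, the point is that $\qW(X)$ is exactly the largest $q$ with $g \in qG$, while $\qQ(X)$ is the largest $q$ with $\bar g \in q\overline{G}$. Here I use that $-K_X \sim qB$ means precisely $g \in qG$, that $-K_X \sim_{\mathbb Q} qA$ means $\bar g = q\bar A$ in $\overline{G}$, and that $-K_X$ is nef and big, so $\bar g \neq 0$ and both indices are finite and attained.

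First I would record that $p \mid \qQ(X)$ forces $\bar g \in p\overline{G}$: if $\bar g = \qQ(X)\,\bar y$, then $\bar g = p\bigl((\qQ(X)/p)\bar y\bigr)$. Lifting to $G$ produces $z \in G$ with $g - pz \in G_{\mathrm{tors}}$; call this torsion class $\tau$. The heart of the argument is then a proof by contradiction. Suppose $G_{\mathrm{tors}}$ has no element of order $p$. Since $G_{\mathrm{tors}}$ is finite, this means $p \nmid |G_{\mathrm{tors}}|$, so multiplication by $p$ is injective, hence bijective, on $G_{\mathrm{tors}}$; thus $\tau = p\tau'$ for some $\tau' \in G_{\mathrm{tors}}$, and therefore $g = p(z+\tau') \in pG$. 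On the other hand $g \in \qW(X)\,G$ by definition, and $p \nmid \qW(X)$ gives $\gcd(p,\qW(X)) = 1$. I would then invoke the elementary identity $mG \cap nG = mnG$ for coprime $m,n$ (which follows from a Bézout relation $sm+tn=1$), applied with $m = p$ and $n = \qW(X)$, to conclude $g \in p\,\qW(X)\,G$. But then $-K_X \sim p\,\qW(X)\,B'$ for some $B'$, contradicting the maximality of $\qW(X)$ since $p \ge 2$. Hence $G_{\mathrm{tors}}$ must contain an element of order $p$.

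The main obstacle --- really the only subtle point --- is keeping straight the two notions of equivalence: $\qW$ sees the full group $\Cl(X)$, so torsion matters, whereas $\qQ$ only sees the free quotient $\overline{G}$. Once this bookkeeping is set up, the combinatorial core is just the coprime-divisibility identity $pG \cap \qW(X)\,G = p\,\qW(X)\,G$, and everything else is formal. I would also verify at the outset that $-K_X$ is non-torsion, which is what guarantees that the maxima defining $\qW(X)$ and $\qQ(X)$ are genuinely finite and attained, so that the concluding inequality $\qW(X) \ge p\,\qW(X)$ is a genuine contradiction.
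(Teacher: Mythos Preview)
Your proof is correct and follows essentially the same approach as the paper's: both show that if $\Cl(X)$ had no $p$-torsion then one could write $-K_X \sim pB'$ in $\Cl(X)$, and then contradict the maximality of $\qW(X)$ via the coprimality of $p$ and $\qW(X)$. The paper phrases this concretely---exhibiting the torsion element $D=-K_X-\qQ(X)A$ and showing $p$ divides its order---while you package the same idea abstractly via multiplication-by-$p$ on $G_{\mathrm{tors}}$ and the identity $pG\cap \qW(X)\,G=p\,\qW(X)\,G$; your version is in fact self-contained where the paper invokes \cite{Prokhorov2010} for the final divisibility step.
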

\begin{proof}
    Denote $q\coloneq \qQ(X)$. By assumption, we have $-K_X\sim_\mathbb{Q}qA$ for some Weil divisor $A$. Then $D\coloneq -K_X-qA$ is a torsion element in $\Cl(X)$. Denote by $s$ the torsion order of $D$. It suffices to show that $p\mid s$. Suppose that $p\nmid s$, then there exists an integer $k$ such that $p\mid ks+1$. This implies that $-K_X=qA+D\sim qA+(ks+1)D=p(\frac{q}{p}A+\frac{ks+1}{p}D)$ where $\frac{q}{p}A+\frac{ks+1}{p}D\in \Cl(X)$. This implies that $p\mid \qW(X)$ (see \cite[Proof of Lemma~3.2(1)]{Prokhorov2010}), a contradiction. 
 \end{proof}

\subsection{Reid's basket and Reid's Riemann--Roch formula}
Let $X$ be a canonical projective $3$-fold. According to Reid \cite[(10.2)]{Reid1987}, there is a collection of pairs of integers (permitting weights)
\[
 B_{X}=\{(r_{i}, b_{i}) \mid i=1, \cdots, s ; 0<b_{i}\leq\frac{r_{i}}{2} ; b_{i} \text{ is coprime to } r_{i}\}
\]
associated to $X$, called {\it Reid's basket}, where a pair $\left(r_{i}, b_{i}\right)$ corresponds to an orbifold point $Q_{i}$ of type $\frac{1}{r_{i}}\left(1, -1, b_{i}\right)$ which comes from deforming singularities of a terminalization of $X$ locally. 
Denote by $\mathcal{R}_X$ the collection of $r_i$ (permitting weights) appearing in $B_X$. 
Note that the Gorenstein index $r_X$ of $X$ is just $\lcm\{r_i\mid r_i\in \mathcal{R}_X\}$.

Recall that for a $\mathbb{Q}$-Cartier $\mathbb{Q}$-divisor $H$ on $X$, $c_2(X)\cdot H$ is defined as $c_2(Y)\cdot f^*H$ where $f\colon Y\to X$ is a resolution of singularities.

\begin{thm}[{\cite[(10.3)]{Reid1987}}]\label{thm.euler}
Let $X$ be a canonical Fano $3$-fold. Then
\begin{align}\label{eq.range}
 c_2(X)\cdot c_1(X) + \sum_{r_i\in \mathcal{R}_X} \left(r_i-\frac{1}{r_i}\right)=24\chi(X, \mathcal O_X)=24,
\end{align}
and 
\begin{align}\label{eq.RR-Fano}
 \frac{1}{2}c_1(X)^3+3- \sum_{(r_i,b_i)\in B_X}\frac{b_i(r_i-b_i)}{2r_i}=-\chi(X, \mathcal{O}_X(2K_X))= h^0(X,-K_X)\in \mathbb Z_{\geq 0}.
 \end{align}
\end{thm}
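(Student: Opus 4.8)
The plan is to obtain both identities as specializations of Reid's singular Riemann--Roch formula for a Weil divisor on a canonical projective $3$-fold, after the usual reduction to isolated cyclic quotient singularities.

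First I would record the reductions. Choosing a $\mathbb Q$-factorial terminalization $\phi\colon W\to X$, so that $W$ is terminal and $K_W=\phi^*K_X$, one has $\chi(W,\mathcal O_W)=\chi(X,\mathcal O_X)$ (canonical singularities are rational, so $\phi_*\mathcal O_W=\mathcal O_X$ and $R^i\phi_*\mathcal O_W=0$ for $i>0$), while crepancy and a common resolution give $c_1(W)^3=c_1(X)^3$ and $c_2(W)\cdot c_1(W)=c_2(X)\cdot c_1(X)$; since Reid's basket is read off from a terminalization, $B_W=B_X$. Hence every quantity in the statement is preserved and I may assume $X$ is terminal, so that its singularities are isolated. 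Kawamata--Viehweg vanishing for the ample divisor $-K_X$ gives $H^i(X,\mathcal O_X)=0$ for $i>0$, whence $\chi(X,\mathcal O_X)=1$; the same vanishing together with Serre duality on the Cohen--Macaulay variety $X$ gives $-\chi(X,\mathcal O_X(2K_X))=\chi(X,\mathcal O_X(-K_X))=h^0(X,-K_X)\ge 0$, identifying the right-hand side of \eqref{eq.RR-Fano}.

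The engine is the singular Riemann--Roch formula
\[
\chi(X,\mathcal O_X(D))=\chi(X,\mathcal O_X)+\tfrac{1}{12}D\cdot(D-K_X)\cdot(2D-K_X)+\tfrac{1}{12}D\cdot c_2(X)+\sum_{Q\in B_X}c_Q(D),
\]
valid for any Weil divisor $D$, in which $c_Q(D)$ depends only on the local type $\tfrac1{r}(1,-1,b)$ at $Q$ and on the class of $D$ in the local class group. I would prove it by applying Hirzebruch--Riemann--Roch on a resolution $\pi\colon\tilde X\to X$ and localizing the exceptional error terms at the singular points, using Reid's key principle that the contribution of a terminal point is a deformation invariant equal to the sum of the contributions of the cyclic quotient singularities in its basket; this replaces every local computation by one on a cyclic quotient, where toric methods apply.

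The main obstacle is exactly this local computation at a cyclic quotient singularity $\tfrac1r(1,-1,b)$. Via equivariant (toric / holomorphic Lefschetz fixed-point) Riemann--Roch one shows, first, that the exceptional defect in the smooth Noether identity $24\chi(\mathcal O_{\tilde X})=c_1(\tilde X)\cdot c_2(\tilde X)$ contributed by such a point equals $r-\tfrac1r$, and, second, that $c_Q(-K_X)$ equals the explicit cyclotomic quantity $\tfrac{1}{12}\!\left(r-\tfrac1r\right)-\tfrac{b(r-b)}{2r}$. These two evaluations are the technical heart. Granting them I assemble the formulas: comparing $24\chi(X,\mathcal O_X)=c_1(\tilde X)\cdot c_2(\tilde X)$ with the definition $c_2(X)\cdot c_1(X)=c_2(\tilde X)\cdot\pi^*c_1(X)$ and inserting the local defects $r_i-\tfrac1{r_i}$ yields \eqref{eq.range} with $\chi(X,\mathcal O_X)=1$. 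Finally I apply the singular Riemann--Roch to $D=-K_X$: since $c_1(X)=-K_X$, the cubic term is $\tfrac1{12}(-K_X)(-2K_X)(-3K_X)=\tfrac12 c_1(X)^3$, and substituting $\tfrac1{12}c_1(X)\cdot c_2(X)=2-\tfrac1{12}\sum_{r_i\in\mathcal{R}_X}(r_i-\tfrac1{r_i})$ from \eqref{eq.range} together with the local terms $c_Q(-K_X)$ collapses the basket sums to $-\sum_{(r_i,b_i)\in B_X}\tfrac{b_i(r_i-b_i)}{2r_i}$, giving $\chi(X,\mathcal O_X(-K_X))=\tfrac12 c_1(X)^3+3-\sum_{(r_i,b_i)\in B_X}\tfrac{b_i(r_i-b_i)}{2r_i}$; by the reduction above this left-hand side is $-\chi(X,\mathcal O_X(2K_X))=h^0(X,-K_X)$, which is \eqref{eq.RR-Fano}.
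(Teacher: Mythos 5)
Your proposal is correct and takes essentially the same approach as the paper: the paper's proof likewise rests on Reid's \cite[(10.3)]{Reid1987} combined with Kawamata--Viehweg vanishing (giving $\chi(X,\mathcal{O}_X)=1$ and $h^0(X,-K_X)=\chi(X,\mathcal{O}_X(-K_X))$) and Serre duality (giving $-\chi(X,\mathcal{O}_X(2K_X))=\chi(X,\mathcal{O}_X(-K_X))$). The only difference is that the paper invokes Reid's singular Riemann--Roch as a citation, whereas you additionally sketch its derivation (terminalization, deformation to cyclic quotient points, and the local evaluations $r-\tfrac{1}{r}$ and $\tfrac{1}{12}\bigl(r-\tfrac{1}{r}\bigr)-\tfrac{b(r-b)}{2r}$), all of which is accurate and consistent with Reid's own argument.
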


\begin{proof}
 As $-K_X$ is ample, by the Kawamata--Viehweg vanishing theorem, $h^0(X, \mathcal{O}_X)=\chi(X, \mathcal{O}_X)$ and $h^0(X,-K_X)=\chi(X, \mathcal{O}_X(-K_X) )$. Then the theorem follows from \cite[(10.3)]{Reid1987} and the Serre duality.
\end{proof}

\section{Kawamata--Miyaoka type inequality}\label{sec.upperbound}

\subsection{$\mathbb{Q}$-variant of Langer's inequality}

Let $X$ be a projective klt variety of dimension $n\geq 2$. Then there exists a closed subset $Z\subset X$ of codimension $3$ such that $X\setminus Z$ has only quotient singularities and admits a structure of a $\mathbb{Q}$-variety (see \cite[\S\,3.2 and \S\,3.6]{GrebKebekusPeternellTaji2019a}). Moreover, for any reflexive sheaf $\mathcal{E}$ on $X$, as explained in \cite[Construction~3.8 and \S\,3.7]{GrebKebekusPeternellTaji2019a}, we can define the \emph{generalized second Chern class} $\hat{c}_2(\mathcal{E})$, which is a symmetric $\mathbb{Q}$-multilinear map:
\[
 \hat{c}_2(\mathcal{E})\colon N^1(X)_{\mathbb{Q}}^{\times (n-2)} \longrightarrow \mathbb{Q},\quad (\alpha_1,\dots,\alpha_{n-2})\longmapsto \hat{c}_2(\mathcal{E})\cdot \alpha_1\cdots\alpha_{n-2}.
\]
On the other hand, since $X$ is $\mathbb{Q}$-factorial in codimension $2$, for any two $\mathbb{Q}$-divisor classes $\beta$ and $\gamma$ on $X$, the product $\beta\cdot \gamma$ is also a well-defined symmetric $\mathbb{Q}$-multilinear map:
\[
 \beta\cdot \gamma\colon N^1(X)_{\mathbb{Q}}^{\times (n-2)}\longrightarrow \mathbb{Q},\quad (\alpha_1,\dots,\alpha_{n-2})\longmapsto \beta\cdot \gamma\cdot \alpha_1\cdots\alpha_{n-2}.
\]
Now the \emph{$\mathbb{Q}$-Bogomolov discriminant} $\hat{\Delta}(\mathcal{E})$ of a reflexive sheaf $\mathcal{E}$ of rank $r$ is defined as follows:
	\[
	\hat{\Delta}(\mathcal{E})\coloneqq 2r \hat{c}_2(\mathcal{E}) - (r-1) c_1(\mathcal{E})^2,
	\]
which is viewed as a symmetric $\mathbb{Q}$-multilinear map $N^1(X)_{\mathbb{Q}}^{\times (n-2)}\rightarrow \mathbb{Q}$. According to \cite[Lemma~6.5]{KeelMatsukiMcKernan1994}, for any semistable reflexive sheaf $\mathcal{E}$ with respect to a collection of nef classes $(\alpha_1,\dots,\alpha_{n-1})$, the following \emph{$\mathbb{Q}$-Bogomolov--Gieseker inequality} holds:
\begin{equation}\label{eq.BGineq}
 \hat{\Delta}(\mathcal{E})\cdot \alpha_1\cdots\alpha_{n-2} \geq 0.
\end{equation}

\begin{lem}
	\label{lem.LangerIneqI}
	Let $\mathcal{E}$ be a reflexive sheaf of rank $r$ on a projective klt variety $X$ of dimension $n\geq 2$. Let $(\alpha_1,\dots,\alpha_{n-1})$ be a collection of nef classes such that $\alpha_1\cdots \alpha_{n-1}$ is not numerically trivial. Let $0=\mathcal{E}_0\subsetneq \mathcal{E}_1\subsetneq \dots\subsetneq \mathcal{E}_m=\mathcal{E}$ be the Harder--Narasimhan filtration of $\mathcal{E}$ with respect to $(\alpha_1,\dots,\alpha_{n-1})$. Denote by $\mathcal{F}_i$ the reflexive hull of the quotient $\mathcal{E}_i/\mathcal{E}_{i-1}$ and set $r_i=\rank(\mathcal{F}_i)$ for any $i$. Then we have
	\begin{equation}
		\label{eq.LangerIneqI}
	\hat{\Delta}(\mathcal{E})\cdot \alpha_1\cdots\alpha_{n-2} \geq - \sum_{1\leq i<j\leq m} r_i r_j \left(\frac{c_1(\mathcal{F}_i)}{r_i}-\frac{c_1(\mathcal{F}_j)}{r_j}\right)^2\cdot \alpha_1\cdots\alpha_{n-2}.
	\end{equation}
\end{lem}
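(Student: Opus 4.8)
The plan is to reduce the inequality to the $\mathbb{Q}$-Bogomolov--Gieseker inequality \eqref{eq.BGineq} applied to each graded piece of the Harder--Narasimhan filtration, combined with a purely formal identity that expresses $\hat{\Delta}(\mathcal{E})$ through the discriminants of the $\mathcal{F}_i$ and the pairwise slope differences. Throughout write $\mu_i\coloneqq c_1(\mathcal{F}_i)/r_i\in N^1(X)_{\mathbb{Q}}$ and abbreviate $\alpha\coloneqq \alpha_1\cdots\alpha_{n-2}$.

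First I would set up the Chern-class bookkeeping on the $\mathbb{Q}$-variety locus. Put $\mathcal{Q}_i\coloneqq \mathcal{E}_i/\mathcal{E}_{i-1}$; these are torsion-free with reflexive hulls $\mathcal{F}_i=\mathcal{Q}_i^{\vee\vee}$. Since the $\mathcal{E}_i$ are saturated and $c_1$ is unchanged by passing to reflexive hulls, one has $c_1(\mathcal{E})=\sum_i c_1(\mathcal{F}_i)$. Multiplicativity of the total Chern class along the filtration gives
\[
\hat{c}_2(\mathcal{E})=\sum_i \hat{c}_2(\mathcal{Q}_i)+\sum_{i<j}c_1(\mathcal{Q}_i)\cdot c_1(\mathcal{Q}_j).
\]
The comparison between a torsion-free sheaf and its reflexive hull reads $\hat{c}_2(\mathcal{Q}_i)=\hat{c}_2(\mathcal{F}_i)+[\mathcal{T}_i]$, where $\mathcal{T}_i$ is the cokernel of $\mathcal{Q}_i\hookrightarrow \mathcal{F}_i$, a torsion sheaf supported in codimension $\geq 2$; as $[\mathcal{T}_i]$ is an effective cycle and $\alpha$ is a product of nef classes, $[\mathcal{T}_i]\cdot\alpha\geq 0$. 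Hence, using $c_1(\mathcal{Q}_i)=c_1(\mathcal{F}_i)$,
\[
\hat{c}_2(\mathcal{E})\cdot\alpha\ \geq\ \sum_i \hat{c}_2(\mathcal{F}_i)\cdot\alpha+\sum_{i<j}c_1(\mathcal{F}_i)\cdot c_1(\mathcal{F}_j)\cdot\alpha.
\]

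Next I would feed this into the definition $\hat{\Delta}(\mathcal{E})=2r\hat{c}_2(\mathcal{E})-(r-1)c_1(\mathcal{E})^2$. Because the coefficient $2r$ is positive, the inequality survives, and an elementary manipulation—most transparently phrased via $\hat{\Delta}(\mathcal{G})=c_1(\mathcal{G})^2-2\rank(\mathcal{G})\,\ch_2(\mathcal{G})$—turns the right-hand side into a weighted sum of discriminants minus a quadratic form in the slopes. Concretely, substituting $c_1(\mathcal{F}_i)=r_i\mu_i$ and $r=\sum_i r_i$ and using the identity $\left(\sum_i r_i\mu_i\right)^2-r\sum_i r_i\mu_i^2=-\sum_{i<j}r_i r_j(\mu_i-\mu_j)^2$, the cross terms reorganize into
\[
\hat{\Delta}(\mathcal{E})\cdot\alpha\ \geq\ \sum_{i=1}^m \frac{r}{r_i}\,\hat{\Delta}(\mathcal{F}_i)\cdot\alpha\ -\ \sum_{1\leq i<j\leq m} r_i r_j (\mu_i-\mu_j)^2\cdot\alpha.
\]
Finally, each $\mathcal{F}_i$ is a semistable reflexive sheaf with respect to $(\alpha_1,\dots,\alpha_{n-1})$, being the reflexive hull of a graded piece of the Harder--Narasimhan filtration (reflexive hull preserves both $c_1$ and semistability). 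So \eqref{eq.BGineq} gives $\hat{\Delta}(\mathcal{F}_i)\cdot\alpha\geq 0$, and since $r/r_i>0$ the first sum is nonnegative; dropping it yields \eqref{eq.LangerIneqI}. I expect the main obstacle to be the first step: making the Chern-class additivity and the reflexive-hull comparison rigorous within the generalized Chern class formalism of \cite{GrebKebekusPeternellTaji2019a} on the $\mathbb{Q}$-variety locus, and in particular verifying that the reflexive-hull correction $[\mathcal{T}_i]$ enters with the favorable sign against the nef product $\alpha$. The algebra in the remaining steps is routine once this additivity is in place.
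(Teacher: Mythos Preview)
Your proposal is correct and follows essentially the same route as the paper: both reduce to the $\mathbb{Q}$-Bogomolov--Gieseker inequality for each semistable $\mathcal{F}_i$ after establishing the identity $\hat{\Delta}(\mathcal{E})\cdot\alpha \geq \sum_i \frac{r}{r_i}\hat{\Delta}(\mathcal{F}_i)\cdot\alpha - \sum_{i<j} r_i r_j(\mu_i-\mu_j)^2\cdot\alpha$ via the same algebraic rearrangement you describe. The one step you flag as the potential obstacle---the $\hat{c}_2$ subadditivity and reflexive-hull comparison in the $\mathbb{Q}$-variety formalism---is exactly what the paper dispatches by citing \cite[Lemma~2.3]{Kawamata1992a} and \cite[Theorem~3.13]{GrebKebekusPeternellTaji2019a}, thereby working directly with the reflexive sheaves $\mathcal{E}_{i-1}$, $\mathcal{E}_i$, $\mathcal{F}_i$ and bypassing any need to define $\hat{c}_2$ for the non-reflexive quotients $\mathcal{Q}_i$.
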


\begin{proof}
	Since $\mathcal{E}$ is reflexive and any $\mathcal{E}_i$ is saturated in $\mathcal{E}$, each $\mathcal{E}_i$ is reflexive and we have the following exact sequence of reflexive sheaves
	\[
	0\longrightarrow \mathcal{E}_{i-1} \longrightarrow \mathcal{E}_i \longrightarrow \mathcal{F}_i.
	\]
	Since $\mathcal{E}_i\rightarrow \mathcal{F}_i$ is surjective in codimension $1$, we have $c_1(\mathcal{E}_i)=c_1(\mathcal{E}_{i-1}) + c_1(\mathcal{F}_i)$. On the other hand, thanks to \cite[Lemma~2.3]{Kawamata1992a} and \cite[Theorem~3.13]{GrebKebekusPeternellTaji2019a}, we obtain
	\[
	\hat{c}_2(\mathcal{E}_i)\cdot \alpha_1 \cdots \alpha_{n-2} \geq \left(\hat{c}_2(\mathcal{E}_{i-1}) + \hat{c}_2(\mathcal{F}_i) + c_1(\mathcal{E}_{i-1})\cdot c_1(\mathcal{F}_i)\right)\cdot \alpha_1\cdots\alpha_{n-2}.
	\]
	Now a straightforward computation derives
	\[
	\frac{\hat{\Delta}(\mathcal{E})\cdot \alpha_1\cdots \alpha_{n-2}}{r} \geq \sum_{i=1}^{m} \frac{\hat{\Delta}(\mathcal{F}_i)\cdot \alpha_1\cdots \alpha_{n-2}}{r_i} - \frac{1}{r}\sum_{i<j} r_i r_j \left(\frac{c_1(\mathcal{F}_i)}{r_i}-\frac{c_1(\mathcal{F}_j)}{r_j}\right)^2\cdot \alpha_1\cdots\alpha_{n-2}
	\]
	and hence the $\mathbb{Q}$-Bogomolov--Gieseker inequality \eqref{eq.BGineq} yields the desired inequality as $\mathcal{F}_i$ is semistable with respect to $(\alpha_1,\dots,\alpha_{n-1})$ for each $i$.
\end{proof}

The following $\mathbb{Q}$-variant of \cite[Theorem~5.1]{Langer2004} is more or less well-known to experts. We provide a proof for the reader's convenience.

\begin{thm}
	\label{thm.LangerIneqII}
	Let $\mathcal{E}$ be a reflexive sheaf of rank $r$ on a projective klt variety $X$ of dimension $n\geq 2$ and let $(\alpha_1,\dots,\alpha_{n-1})$ be a collection of nef classes. Then we have
	\[
	(\alpha_1\cdots \alpha_{n-2} \cdot \alpha_{n-1}^2) \cdot(\hat{\Delta}(\mathcal{E})\cdot \alpha_1\cdots\alpha_{n-2}) + r^2 (\mu_{\max} - \mu) (\mu-\mu_{\min}) \geq 0,
	\]
	where $\mu$ (resp. $\mu_{\max}$ and $\mu_{\min}$) is the slope (resp. maximal slope and minimal slope) of $\mathcal{E}$ with respect to $(\alpha_1,\dots,\alpha_{n-1})$.
\end{thm}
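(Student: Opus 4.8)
The plan is to multiply the Harder--Narasimhan estimate of Lemma~\ref{lem.LangerIneqI} by $H^2\cdot L$, where I write $L\coloneqq\alpha_1\cdots\alpha_{n-2}$ and $H\coloneqq\alpha_{n-1}$, and then to control the resulting right-hand side by the Hodge index theorem together with an elementary inequality among the slopes. In this notation the assertion reads $(H^2\cdot L)(\hat{\Delta}(\mathcal{E})\cdot L)+r^2(\mu_{\max}-\mu)(\mu-\mu_{\min})\geq 0$. First I would dispose of the degenerate case $H^2\cdot L=0$: here the first summand vanishes, and since by the very definition of the maximal and minimal slopes one always has $\mu_{\min}\leq\mu\leq\mu_{\max}$, the product $(\mu_{\max}-\mu)(\mu-\mu_{\min})$ is nonnegative, so the inequality holds trivially. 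Thus I may assume $H^2\cdot L>0$; in particular $\alpha_1\cdots\alpha_{n-1}=L\cdot H$ is not numerically trivial, so the Harder--Narasimhan filtration and all slopes are well defined and Lemma~\ref{lem.LangerIneqI} is available.

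Let $0=\mathcal{E}_0\subsetneq\cdots\subsetneq\mathcal{E}_m=\mathcal{E}$ be the Harder--Narasimhan filtration with respect to $(\alpha_1,\dots,\alpha_{n-1})$, let $\mathcal{F}_i$ be the reflexive hull of $\mathcal{E}_i/\mathcal{E}_{i-1}$, and set $r_i=\rank(\mathcal{F}_i)$ and $\mu_i=\mu(\mathcal{F}_i)$, so that $\mu_1>\cdots>\mu_m$ with $\mu_{\max}=\mu_1$, $\mu_{\min}=\mu_m$, and (by the additivity $c_1(\mathcal{E})=\sum_i c_1(\mathcal{F}_i)$ used in Lemma~\ref{lem.LangerIneqI}) $\mu=\frac{1}{r}\sum_i r_i\mu_i$. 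Writing $D_{ij}\coloneqq\frac{c_1(\mathcal{F}_i)}{r_i}-\frac{c_1(\mathcal{F}_j)}{r_j}$ and multiplying \eqref{eq.LangerIneqI} by $H^2\cdot L\geq 0$ gives
\[
(H^2\cdot L)(\hat{\Delta}(\mathcal{E})\cdot L)\geq -\sum_{i<j}r_ir_j\,(H^2\cdot L)(D_{ij}^2\cdot L).
\]
The crucial input is the Hodge index theorem for nef classes, namely $(D_{ij}\cdot H\cdot L)^2\geq (D_{ij}^2\cdot L)(H^2\cdot L)$; I would obtain it by fixing an ample class $A$, replacing each $\alpha_k$ by the ample class $\alpha_k+\varepsilon A$, invoking the classical Hodge index theorem for the resulting product of ample classes, and letting $\varepsilon\to 0$. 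Since $D_{ij}\cdot H\cdot L=\mu_i-\mu_j$, this bounds each term by $(H^2\cdot L)(D_{ij}^2\cdot L)\leq(\mu_i-\mu_j)^2$, and hence
\[
(H^2\cdot L)(\hat{\Delta}(\mathcal{E})\cdot L)\geq -\sum_{i<j}r_ir_j(\mu_i-\mu_j)^2.
\]

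It then remains to prove the purely numerical inequality $\sum_{i<j}r_ir_j(\mu_i-\mu_j)^2\leq r^2(\mu_{\max}-\mu)(\mu-\mu_{\min})$. Using the identity $\sum_{i<j}r_ir_j(\mu_i-\mu_j)^2=r\sum_i r_i(\mu_i-\mu)^2$ and expanding the manifestly nonnegative sum $\sum_i r_i(\mu_1-\mu_i)(\mu_i-\mu_m)\geq 0$ (each factor being nonnegative because $\mu_m\leq\mu_i\leq\mu_1$), a short computation yields $\sum_i r_i(\mu_i-\mu)^2\leq r(\mu_1-\mu)(\mu-\mu_m)$, i.e.\ the weighted variance of the $\mu_i$ is at most $(\mu_{\max}-\mu)(\mu-\mu_{\min})$. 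Combining this with the last display proves the theorem.

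Beyond Lemma~\ref{lem.LangerIneqI}, which already carries all the sheaf-theoretic content, the single genuinely nontrivial ingredient is the Hodge index inequality for nef (as opposed to ample) classes; its degeneration from the ample case, together with the bookkeeping of the case $H^2\cdot L=0$, are the only points requiring care, whereas the slope inequality is a formal weighted-variance estimate that is what makes the bound come out exactly as $(\mu_{\max}-\mu)(\mu-\mu_{\min})$.
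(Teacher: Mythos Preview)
Your proof is correct and follows essentially the same route as the paper: both combine Lemma~\ref{lem.LangerIneqI} with the Hodge index theorem to reduce to the elementary inequality $\sum_{i<j}r_ir_j(\mu_i-\mu_j)^2\leq r^2(\mu_{\max}-\mu)(\mu-\mu_{\min})$. The only variation is in how this last inequality is verified---you use the weighted-variance identity $\sum_{i<j}r_ir_j(\mu_i-\mu_j)^2=r\sum_i r_i(\mu_i-\mu)^2$ together with a Popoviciu-type bound, while the paper expands $r^2(\mu_{\max}-\mu)(\mu-\mu_{\min})=\bigl(\sum_i r_i(\mu_{\max}-\mu_i)\bigr)\bigl(\sum_j r_j(\mu_j-\mu_{\min})\bigr)$ and bounds term by term---but this is a cosmetic difference.
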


\begin{proof}
	We will follow the notation in Lemma~\ref{lem.LangerIneqI}. Without loss of generality, we shall assume $d\coloneqq \alpha_1\cdots \alpha_{n-2}\cdot \alpha_{n-1}^2>0$. Then combining the Hodge index theorem and Lemma~\ref{lem.LangerIneqI} yields
	\[
		\hat{\Delta}(\mathcal{E})\cdot \alpha_1\cdots\alpha_{n-2} 
		 \geq - \frac{1}{d} \sum_{1\leq i<j\leq m} r_i r_j (\mu_i-\mu_j)^2,
	\]
	where $\mu_i$ is the slope of $\mathcal{F}_i$ with respect to $(\alpha_1,\dots,\alpha_{n-1})$. As $r\mu=\sum r_i\mu_i$ and $\mu_{\max}\geq \mu_i\geq \mu_{\min}$ for any $1\leq i\leq m$, we obtain
	\begin{align*}
		r^2(\mu_{\max}-\mu) (\mu-\mu_{\min})
		 & = \left(\sum_{i=1}^{m} r_i (\mu_{\max} - \mu_i)\right) \left(\sum_{j=1}^{m} r_j(\mu_j - \mu_{\min})\right) \\
		 & \geq \sum_{1\leq i < j\leq m} r_i r_j (\mu_{\max}-\mu_i) (\mu_j-\mu_{\min}) \\
		 & \geq \sum_{1\leq i<j\leq m} r_i r_j (\mu_j - \mu_i)^2,
	\end{align*}
	which implies immediately the desired inequality.
\end{proof}

\subsection{Foliations}

We gather some basic notions and facts regarding foliations on varieties. We refer the reader to \cite[\S\,3]{Druel2021} and the references therein for a more detailed explanation. 

\begin{defn}
	A {\it foliation} on a normal variety $X$ is a non-zero coherent subsheaf $\mathcal{F}$ of the tangent sheaf $\mathcal{T}_X$ such that 
	\begin{enumerate}
		\item $\mathcal{T}_X/\mathcal{F}$ is torsion-free, and
		
		\item $\mathcal{F}$ is closed under the Lie bracket.
	\end{enumerate} 
	The {\it canonical divisor} of a foliation $\mathcal{F}$ is any Weil divisor $K_{\mathcal{F}}$ on $X$ such that $\det(\mathcal{F})\cong \mathcal{O}_X(-K_{\mathcal{F}})$. The \emph{rank} of $\mathcal{F}$ is defined as the generic rank of $\mathcal{F}$.
\end{defn}

Let $X_{\circ}\subset X_{\reg}$ be the largest open subset over which $\mathcal{T}_X/\mathcal{F}$ is locally free. A \emph{leaf} of $\mathcal{F}$ is a maximal connected and immersed holomorphic submanifold $L\subset X_{\circ}$ such that $\mathcal{T}_L=\mathcal{F}|_L$. A leaf is called \emph{algebraic} if it is open in its Zariski closure and a foliation $\mathcal{F}$ is said to be \emph{algebraically integrable} if its leaves are algebraic. 

Let $\varphi\colon Y\dashrightarrow X$ be a rational dominant map between normal varieties and let $\mathcal{F}$ be a foliation on $X$. Let $Y_{\circ}\subset Y_{\reg}$ be the largest open subset such that $\varphi$ is well-defined and $\varphi(Y_{\circ})\subset X_{\reg}$. The \emph{pull-back} $\varphi^{-1}\mathcal{F}$ of $\mathcal{F}$ is the unique foliation on $Y$ such that 
\[
(\varphi^{-1}\mathcal{F})|_{Y_{\circ}}=\ker(d\varphi|_{Y_{\circ}}\colon \mathcal{T}_{Y_{\circ}} \rightarrow \mathcal{T}_{X}/\mathcal{F}).
\]

Let $\mathcal{F}$ be an algebraically integrable foliation on a normal projective variety $X$. Then there exists a 
diagram, called the \emph{family of leaves}, as follows (\cite[\S\,3.6]{Druel2021}):
\begin{equation}
	\label{eq.familyofleaves}
	\begin{tikzcd}[row sep=large, column sep=large]
		U \arrow[r,"e"] \arrow[d,"p"]
		& X \\
		T
	\end{tikzcd}
\end{equation}
where $U$ and $T$ are normal projective varieties, the \emph{evaluation morphism} $e$ is birational and $p$ is an equidimensional fibration such that the image $e(p^{-1}(t))$ is the closure of a leaf of $\mathcal{F}$ for general $t\in T$. Assume in addition that $K_{\mathcal{F}}$ is $\mathbb{Q}$-Cartier, then there exists a canonically defined effective $e$-exceptional $\mathbb{Q}$-divisor $\Delta$ such that 
\begin{equation}
	\label{eq.PullBackKF}
	K_{e^{-1}\mathcal{F}} + \Delta \sim_{\mathbb{Q}} e^*K_{\mathcal{F}}.
\end{equation}

\begin{lem}[\protect{cf. \cite[Proposition 4.17]{Druel2021}}]
	\label{lem.Exc-div-family-leaves}
	Let $g\colon Y\rightarrow B$ be a surjective morphism between normal varieties with irreducible general fibers. Assume that there exists a birational projective morphism $f\colon Y\rightarrow X$ to a quasi-projective normal variety $X$ such that
	\begin{enumerate}
		\item the canonical divisor $K_{\mathcal{F}}$ of the foliation $\mathcal{F}$ on $X$ induced by $g$ is $\mathbb{Q}$-Cartier, and
		
		\item the restriction $f|_F\colon F \rightarrow f(F)$ to general fibers $F$ of $g$ is birational and finite.
	\end{enumerate}
 Write $K_{f^{-1}\mathcal{F}} + \Delta \sim_{\mathbb{Q}} f^*K_{\mathcal{F}}$ for some $f$-exceptional $\mathbb{Q}$-divisor $\Delta$. Then for any $f$-exceptional $g$-horizontal prime divisor $E$, we have $E\subset \Supp(\Delta)$.
\end{lem}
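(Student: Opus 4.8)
The plan is to restrict the defining relation $K_{f^{-1}\mathcal{F}}+\Delta\sim_{\mathbb{Q}}f^{*}K_{\mathcal{F}}$ to a general fiber $F$ of $g$, and thereby reduce the statement to a discrepancy computation for the induced birational morphism $f|_{F}\colon F\to L:=f(F)$ onto the closure of the corresponding leaf of $\mathcal{F}$. First I would fix a general (hence smooth, by generic smoothness) fiber $F$. Since $F$ is a leaf of the foliation $f^{-1}\mathcal{F}$ induced by $g$, one has $(f^{-1}\mathcal{F})|_{F}=\mathcal{T}_{F}$ and therefore $K_{f^{-1}\mathcal{F}}|_{F}\sim_{\mathbb{Q}}K_{F}$; dually, for the general leaf $L$ the foliation adjunction gives $K_{\mathcal{F}}|_{L}\sim_{\mathbb{Q}}K_{L}$ as Weil divisor classes. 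Using that $F\hookrightarrow Y\xrightarrow{f}X$ factors as $F\xrightarrow{f|_{F}}L\hookrightarrow X$, restricting the defining relation to $F$ and choosing representatives compatibly yields, as an identity of $f|_{F}$-exceptional divisors,
\[
\Delta|_{F}\sim_{\mathbb{Q}}(f|_{F})^{*}K_{L}-K_{F}.
\]
Because $f|_{F}$ is birational (condition (2)), the coefficient of any $f|_{F}$-exceptional prime divisor $D$ in $\Delta|_{F}$ equals $-a(D,L)$, the negative of the discrepancy of $D$ over $L$.

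Next I would locate $E$ in this picture. Since $E$ is $g$-horizontal, $E\cap F$ is a prime divisor on $F$ for general $F$, and the coefficient of $E$ in $\Delta$ agrees with that of $E\cap F$ in $\Delta|_{F}$, as the $g$-vertical components of $\Delta$ do not meet a general $F$. The key geometric input is that $E\cap F$ is genuinely $f|_{F}$-exceptional: as $E$ is $f$-exceptional we have $\dim f(E)\le n-2$, so for a general leaf $L$ the fiber-dimension theorem gives $\dim\bigl(f(E)\cap L\bigr)\le r-2$ with $r=\rank\mathcal{F}$; since $f(E\cap F)\subseteq f(E)\cap L$ while $\dim(E\cap F)=r-1$, the morphism $f|_{F}$ must contract $E\cap F$. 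Thus $E\cap F$ is an honest exceptional divisor of $f|_{F}$, and the problem reduces to showing $a(E\cap F,L)\ne 0$.

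This nonvanishing is the step I expect to be the main obstacle. For a general birational morphism an exceptional divisor can be crepant, so the nonvanishing cannot come from birationality alone; it must use that $E$ is not invariant under the foliation, being $g$-horizontal and hence transverse to the general leaf. Concretely, I would first verify the nonvanishing on the local transverse model, where $\mathcal{F}$ is the relative tangent foliation of a smooth fibration and $E$ arises from blowing up a smooth center meeting the leaves transversally: a direct computation of $\det(f^{-1}\mathcal{F})$ there shows that the coefficient of $E$ in $\Delta$ equals $-(\codim-1)\ne 0$, matching $a(E\cap F,L)=\codim-1$. I would then reduce the general case to this model by comparing, along the non-invariant divisor $E$, the foliated discrepancy computed from \eqref{eq.PullBackKF} with the ordinary discrepancy. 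The delicate points are the validity of the adjunction $K_{\mathcal{F}}|_{L}\sim_{\mathbb{Q}}K_{L}$ for a possibly singular leaf $L$, and the precise comparison of foliated and ordinary discrepancies along the transverse divisor $E$; together these should force $a(E\cap F,L)\neq 0$ and hence $E\subseteq\Supp(\Delta)$.
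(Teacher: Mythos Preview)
Your plan has a genuine gap, and it is located exactly where you flag the ``delicate points.'' The adjunction $K_{\mathcal{F}}|_{L}\sim_{\mathbb{Q}}K_{L}$ is \emph{not} valid along the singular locus of $\mathcal{F}$ on $L$, and $f(E\cap F)$ lands precisely there; the correction term in the foliated adjunction is the entire content of the statement, not a technicality. Concretely, take $X=\mathbb{P}^2$ with the pencil of lines through a point $p$: then $Y=\mathrm{Bl}_p\mathbb{P}^2$, $E$ is the $(-1)$-curve, $f|_F$ is an isomorphism, $K_{\mathcal{F}}=-H$, and one computes $\Delta=E$ while $(f|_F)^*K_L-K_F=0$. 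So your displayed identity $\Delta|_F\sim_{\mathbb{Q}}(f|_F)^*K_L-K_F$ is false, and the coefficient of $E$ does not arise as an ordinary discrepancy on the leaf.

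Relatedly, your claim that $E\cap F$ is $f|_F$-exceptional cannot hold: hypothesis~(2) says $f|_F$ is \emph{finite}, so nothing is contracted. Your fiber-dimension argument breaks down because the leaves $L$ are not in general position with respect to $f(E)$; in the example above \emph{every} leaf passes through $p=f(E)$, so $\dim(f(E)\cap L)=0=r-1$, not $r-2$. For the same reason your ``local transverse model'' (blowing up a smooth center meeting leaves transversally) is inconsistent with the setup: such a blow-up is not finite along fibers.

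The paper proceeds quite differently. It argues by induction on the rank $r$, invoking \cite[Proposition~4.17]{Druel2021} directly for the base case $r=1$, and for $r\geq 2$ it compactifies, passes to the family of leaves, and cuts by a general hyperplane section $\bar H\subset\bar X$ to reduce the rank. Comparing $K_{\bar{\mathcal G}}$ with $K_{\bar{\mathcal F}}|_{\bar H}$ and tracking which correction terms are horizontal over the base, one obtains $\Supp(E|_G)\subset\Supp(\Delta|_G)$ from the induction hypothesis, whence $E\subset\Supp(\Delta)$. The argument never restricts to a single leaf.
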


\begin{proof}
	Let $r\coloneqq \dim (Y)-\dim(B)$. We will follow the argument of \cite[Proposition 4.17]{Druel2021} by induction on $r$ with some modifications in our situation. If $r=1$, then the result follows from the first part of the proof of \cite[Proposition 4.17]{Druel2021}. Here we remark that though $X$ is assumed to be projective therein, but the proof is local and hence it works also in our situation.
	
	For $r\geq 2$, choose a normal projective compactification $\bar{X}$ of $X$ and let $\bar{\mathcal{F}}$ be the foliation on $\bar{X}$ induced by $\mathcal{F}$. Then $\bar{\mathcal{F}}$ is algebraically integrable. Denote by $ {p}\colon U\rightarrow T$ the family of leaves of $\bar{\mathcal{F}}$. By the assumption, after shrinking $X$ and accordingly $Y$, we may assume that there exists an open subset $B^{\circ}$ of $B$ such that there exist embeddings $B^{\circ}\hookrightarrow T$ and $Y^{\circ}\coloneqq Y\times_B  B^{\circ}\hookrightarrow U$ such that the following diagram commutative:
	\[
	\begin{tikzcd}[row sep=large,column sep=large]
		B  
		& Y \arrow[l,"{g}" above] \arrow[r,"{f}"] 
		& X \\
		B^{\circ} \arrow[d,hookrightarrow] \arrow[u,hookrightarrow]
		 & Y^{\circ} \arrow[l,"{g^{\circ}\coloneqq g|_{Y^{\circ}}}" above] \arrow[r,"{f^{\circ}\coloneqq f|_{Y^{\circ}}}"] \arrow[u,hookrightarrow]
 \arrow[d,hookrightarrow]
		 & X \arrow[d,hookrightarrow] \arrow[u,equal] \\
		T 
		 & U \arrow[l,"{p}" above] \arrow[r,"e"]
		 & \bar{X}.
	\end{tikzcd}
	\]
	Without loss of generality, we may assume that $\bar{X}\subset \mathbb{P}^N$ for some $N$. Let $\bar{E}$ be the closure of $E^{\circ}\coloneqq E\cap Y^{\circ}$ in $U$. Let $\bar{H}$ be a general hyperplane section of $\bar{X}$ and let $H\coloneqq \bar{H}\cap X$. Set $\bar{\mathcal{G}}\coloneqq \bar{\mathcal{F}}|_{\bar{H}}\cap T_{\bar{H}}$ and $\mathcal{G}\coloneqq \bar{\mathcal{G}}|_H$. Then $\bar{\mathcal{G}}$ and $\mathcal{G}$ are foliations of rank $r-1$. Denote $e^{-1}(\bar{H})$ by $\bar{G}$. Let $G^{\circ}\coloneqq \bar{G}\cap Y^{\circ}$ and $G\coloneqq f^{-1}(H)$. Then we get the following commutative diagram:
	\[
	\begin{tikzcd}[row sep=large,column sep=large]
		B  
		& G \arrow[l,"{\psi\coloneqq g|_G}" above] \arrow[r,"{\gamma\coloneqq f|_G}"] 
		& H \\
		B^{\circ} \arrow[d,hookrightarrow] \arrow[u,hookrightarrow]
		& G^{\circ} \arrow[l,"{\psi^{\circ}\coloneqq g|_{G^{\circ}}}" above] \arrow[r,"{\gamma^{\circ}\coloneqq f|_{G^{\circ}}}"] \arrow[u,hookrightarrow]
		\arrow[d,hookrightarrow]
		& H \arrow[d,hookrightarrow] \arrow[u,equal] \\
		T 
		& \bar{G} \arrow[l,"{\bar{\psi}=p|_{\bar{G}}}" above] \arrow[r,"{\bar{\gamma}=e|_{\bar{G}}}"]
		& \bar{H}.
	\end{tikzcd}
	\]	
	
	Let $\bar{F}$ be a fiber of $\bar{\psi}$. Since $\bar{E}$ dominates $T$, then $\bar{E}\cap \bar{F}$ has dimension $\geq r-1$. In particular, since $e$ is finite over the fibers of $\bar{\psi}$ and $\bar{H}$ is very ample, the intersection $\bar{E}|_{\bar{G}}$ is a non-zero divisor and we may assume that the following holds:
	\begin{enumerate}
		\item $\bar{E}|_{\bar{G}}$ (resp. $E|_{G}$ and $\Delta|_G$) is $\bar{\gamma}$ (resp. $\gamma$)-exceptional;
		
		\item Every irreducible components of $\bar{E}|_{\bar{G}}$ (resp $E|_G$) dominates $T$ (resp. $B$);
		
		\item For a general fiber $\bar{F}$ of $\bar{\psi}$, the intersection $\bar{G}\cap \bar{F}_{\reg}$ is smooth and $\bar{G}\cap \bar{F}_{\textrm{sing}}$ has codimension at least $3$ in $\bar{F}$;
		
		\item  $E\cap G^{\circ}=\bar{E}\cap G^{\circ}$ is non-empty.
	\end{enumerate}
	Now apply \cite[Proposition 3.6(3) and (1)]{Druel2021} to $(\bar{X},\bar{H},\bar{\mathcal{F}})$ and $(U,\bar{G},e^{-1}\bar{\mathcal{F}})$, respectively, yielding
	\[
	K_{\bar{\mathcal{G}}} \sim K_{\bar{\mathcal{F}}}|_{\bar{H}} + \bar{H}|_{\bar{H}}, \quad \text{and} \quad
	K_{\bar{\gamma}^{-1}\bar{\mathcal{G}}} \sim K_{e^{-1}\bar{\mathcal{F}}}|_{\bar{G}} + \bar{G}|_{\bar{G}} - B_{\bar{G}}
	\]
	for some effective $\bar{\gamma}$-exceptional divisor $B_{\bar{G}}$ on $\bar{G}$.  Then combing the condition (3) with \cite[Proposition 3.6(1)]{Druel2021} shows that $\Supp(B_{\bar{G}})$ does not dominate $T$. On the other hand, as $K_{f^{-1}\mathcal{F}}+\Delta \sim_{\mathbb{Q}} f^*K_{\mathcal{F}}$, by restricting to $H$ and $G^{\circ}$, we obtain
	\[
	K_{(\gamma^{\circ})^{-1}\mathcal{G}} \sim_{\mathbb{Q}} (\gamma^{\circ})^* K_{\mathcal{G}} - \Delta|_{G^{\circ}} - B_{G^{\circ}},
	\]
	where $B_{G^{\circ}}=B_{\bar{G}}|_{G^{\circ}}$. So we can find a $\gamma$-exceptional divisor $D$ supported on $G\setminus G^{\circ}$ such that
	\[
	K_{\gamma^{-1}\mathcal{G}} \sim_{\mathbb{Q}} \gamma^* K_{\mathcal{G}} - \Delta|_G - B_{G} + D,
	\]
	where $B_G$ is the strict transform of $B_{G^{\circ}}$ on $G$. By the conditions (1), (2) and the induction assumption, we obtain 
	\[
	\Supp(E|_G) \subset \Supp(\Delta|_G + B_G - D).
	\]
	However, note that $\Supp(B_G)$ and $\Supp(D)$ do not dominate $B$, we thus must have
	\[
	\Supp(E|_G)\subset \Supp(\Delta|_G).
	\]
	Hence $\Supp(E)\subset \Supp(\Delta)$ by the condition (4) and the generality of $H$.
\end{proof}

	\begin{prop}
		\label{prop.ExcDivFamLeaves}
		Let $\mathcal{F}$ be an algebraically integrable foliation on a normal projective variety $X$ such that $K_{\mathcal{F}}$ is $\mathbb{Q}$-Cartier. Let $e\colon U\rightarrow X$ be the evaluation morphism given in Diagram \eqref{eq.familyofleaves} and write $K_{e^{-1}\mathcal{F}}+\Delta \sim_{\mathbb{Q}} e^*K_{\mathcal{F}}$ as in \eqref{eq.PullBackKF}. Then for any $p$-horizontal $e$-exceptional prime divisor $E$, we have $\mult_E(\Delta)\geq 1$.
	\end{prop}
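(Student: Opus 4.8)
The plan is to show that $\mult_E(\Delta)$ is exactly the \emph{dicritical multiplicity} of $e^{-1}\mathcal{F}$ along $E$, and to compute it by reduction to a foliated surface. Recall that $\Delta$ represents $e^*K_{\mathcal{F}}-K_{e^{-1}\mathcal{F}}$, which measures the failure of the pulled-back foliation to be saturated along the $e$-exceptional locus, and that a $p$-horizontal divisor $E$ is, by construction of the family of leaves, transverse to the general leaf; equivalently, $E$ is non-invariant (dicritical) for $e^{-1}\mathcal{F}$. Since $\Delta$ is only a $\mathbb{Q}$-divisor, Lemma~\ref{lem.Exc-div-family-leaves} (applied with $g=p$ and $f=e$) yields merely $\mult_E(\Delta)>0$, so the real content is the sharp lower bound $\geq 1$.

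First I would reduce to the case $\rank\mathcal{F}=1$. Following verbatim the hyperplane-section construction in the proof of Lemma~\ref{lem.Exc-div-family-leaves}, I cut $X$ by a general hyperplane $\bar{H}$, replace $\mathcal{F}$ by $\bar{\mathcal{G}}=\bar{\mathcal{F}}|_{\bar{H}}\cap \mathcal{T}_{\bar{H}}$ (of rank $\rank\mathcal{F}-1$), and cut $U$ by $\bar{G}=e^{-1}(\bar{H})$. The comparison $K_{\gamma^{-1}\mathcal{G}}\sim_{\mathbb{Q}}\gamma^*K_{\mathcal{G}}-\Delta|_G-B_G+D$ established there shows that the new boundary divisor is $\Delta|_G+B_G-D$; since $B_G$ and $D$ do not dominate the base while $E\cap G$ does, and since restricting to a general $\bar{H}$ preserves the coefficient of a horizontal divisor along its generic point, I obtain $\mult_{E\cap G}(\Delta_G)=\mult_E(\Delta)$, with $E\cap G$ still $p$-horizontal and $e$-exceptional for the induced family of leaves. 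Iterating this brings me to a rank-$1$ foliation whose general leaves are curves.

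In the rank-$1$ case I would restrict to the surface $\Sigma:=p^{-1}(C)$ for a general curve $C\subset T$. Then $\Sigma\to C$ is a family of leaf-curves, $e|_{\Sigma}\colon \Sigma\to S:=e(\Sigma)$ is birational onto a surface swept out by leaves (hence $\mathcal{F}$ induces a rank-$1$ foliation $\mathcal{F}_S$ on $S$), the induced foliation $\mathcal{G}_{\Sigma}=(e|_{\Sigma})^{-1}\mathcal{F}_S$ satisfies $K_{\mathcal{G}_{\Sigma}}+\Delta_{\Sigma}\sim_{\mathbb{Q}}(e|_{\Sigma})^*K_{\mathcal{F}_S}$, and $E_{\Sigma}:=E\cap\Sigma$ is an $(e|_{\Sigma})$-exceptional curve which is non-invariant precisely because $E$ is $p$-horizontal. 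For general $C$ the coefficient is again preserved, $\mult_E(\Delta)=\mult_{E_{\Sigma}}(\Delta_{\Sigma})$, reducing everything to a foliated surface. There I invoke the classical local computation for foliations by curves under a birational morphism (cf.\ \cite{Druel2021}): along a dicritical exceptional divisor the coefficient of $\Delta_{\Sigma}$ equals the algebraic multiplicity $\nu$ of $\mathcal{F}_S$ at the contracted center, and $\nu\geq 1$ because that center must be a singular point of the foliation — at a regular point one would have a foliation chart in which $e|_{\Sigma}$ is an isomorphism, contradicting that $E_{\Sigma}$ is contracted. Hence $\mult_E(\Delta)=\nu\geq 1$.

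The hard part will be the bookkeeping that the coefficient $\mult_E(\Delta)$ together with the three properties ``$p$-horizontal, $e$-exceptional, non-invariant'' all survive each restriction: one must verify that the vertical and $\gamma$-exceptional correction terms in the Druel-type comparison formulas never meet the horizontal divisor $E$. The second delicate point, in the surface step, is establishing the \emph{integral} lower bound $\nu\geq 1$ rather than mere positivity; this genuinely uses the interpretation of $\nu$ as the vanishing order of a vector field together with the fact that $E$ is contracted, and it is exactly where the gap between the $\mathbb{Q}$-divisor bound $\mult_E(\Delta)>0$ of Lemma~\ref{lem.Exc-div-family-leaves} and the desired $\mult_E(\Delta)\geq 1$ is closed. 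By contrast, the rank reduction itself should be routine given the machinery already assembled for Lemma~\ref{lem.Exc-div-family-leaves}.
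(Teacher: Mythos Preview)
Your approach is genuinely different from the paper's, and the surface step does not close the gap between $\mult_E(\Delta)>0$ and $\mult_E(\Delta)\geq 1$.

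The paper never reduces the rank. Instead, working locally near the generic point of $e(E)$, it takes the quasi-\'etale cyclic cover $f^{\circ}\colon X_1^{\circ}\to X^{\circ}$ associated to $mK_{\mathcal{F}}|_{X^{\circ}}\sim 0$. On the cover, $K_{\mathcal{F}_1^{\circ}}\sim 0$ is Cartier, so the exceptional divisor $\Delta_1^{\circ}$ in $K_{\mathcal{G}_1^{\circ}}+\Delta_1^{\circ}\sim (e_1^{\circ})^*K_{\mathcal{F}_1^{\circ}}$ is an \emph{integral} effective Weil divisor. Lemma~\ref{lem.Exc-div-family-leaves} then gives $\mult_{E_1^{\circ}}(\Delta_1^{\circ})\geq 1$ by integrality, and the Riemann--Hurwitz type comparison $\mult_{E_1^{\circ}}(\Delta_1^{\circ})+r-1=r\,\mult_E(\Delta)$ (coming from \cite[Lemma~3.4(2)]{Druel2021}, using that $E$ is non-invariant) transports this back to $\mult_E(\Delta)\geq 1$. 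The whole point is to convert the $\mathbb{Q}$-Cartier divisor $K_{\mathcal{F}}$ into a genuinely Cartier one so that positivity becomes integrality.

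Your surface step tries to achieve integrality differently, but it breaks down because $S=e(\Sigma)$ has no reason to be smooth: $X$ is only normal, and even when $X$ is smooth, $S$ is merely the image of a surface under a birational morphism. On a singular surface the notion of ``algebraic multiplicity $\nu$ of a vector field at the contracted center'' is not defined, so the formula $\mult_{E_\Sigma}(\Delta_\Sigma)=\nu$ has no content there; and your argument that ``the center must be singular for the foliation since otherwise a foliation chart would force $e|_\Sigma$ to be a local isomorphism'' uses smoothness of $S$ at that center. Even in the smooth case, the identity $\mult_{E_\Sigma}(\Delta_\Sigma)=\nu$ is only the single-blow-up formula; for an arbitrary birational $e|_\Sigma$ the coefficient of a given dicritical component is not determined by one multiplicity at the image point. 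Finally, the base-direction restriction to $\Sigma=p^{-1}(C)$ requires an adjunction comparison between $K_{\mathcal{F}}|_S$ and $K_{\mathcal{F}_S}$ (and likewise upstairs) which you have not established, so $\mult_{E_\Sigma}(\Delta_\Sigma)=\mult_E(\Delta)$ is unjustified. The index-one cover is precisely what supplies the missing integrality without any smoothness hypothesis.
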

	
	\begin{proof}
 Since $K_{\mathcal{F}}$ is $\mathbb{Q}$-Cartier, we can find an open subset $X^{\circ}$ of $\eta_{e(E)}$ such that $mK_{\mathcal{F}}|_{X^{\circ}}\sim 0$ for some positive integer $m$, where $\eta_{e(E)}$ is the generic point of $e(E)$. Denote by $f^{\circ}\colon X_1^{\circ}\rightarrow X^{\circ}$ the quasi-\'etale cyclic cover associated to $mK_{\mathcal{F}}|_{X^{\circ}}$ (see \cite[\S\,3.6]{Druel2021}). Let $U_1^{\circ}$ be the normalization of the fiber product $U^{\circ}\times_{X^{\circ}} X_1^{\circ}$, where $U^{\circ}=e^{-1}(X^{\circ})$. Denote by $e^{\circ}_1\colon U_1^{\circ}\rightarrow X_1^{\circ}$ and $g^{\circ}\colon U_1^{\circ} \rightarrow U^{\circ}$ the natural morphisms, which satisfy the following commutative diagram:
		\[
		\begin{tikzcd}[row sep=large,column sep=large]
			U_1^{\circ} \arrow[d,"g^{\circ}" left] \arrow[r,"e_1^{\circ}"] 
			& X_1^{\circ} \arrow[d,"f^{\circ}"] \\
			U^{\circ} \arrow[r,"e^{\circ}\coloneqq e|_{U^{\circ}}"]
			& X^{\circ}
		\end{tikzcd}
		\]
		For simplicity, we set $\mathcal{F}^{\circ}\coloneqq \mathcal{F}|_{X^{\circ}}$, $\mathcal{F}_1^{\circ}=(f^{\circ})^{-1} \mathcal{F}^{\circ}$, $\mathcal{G}^{\circ}=(e^{\circ})^{-1}(\mathcal{F}^{\circ})$ and $\mathcal{G}_1^{\circ}=(e_1^{\circ})^{-1}(\mathcal{F}_1^{\circ})$. As explained in \cite[\S\,3.6]{Druel2021}, there exists an effective $e_1^{\circ}$-exceptional divisor $\Delta_1^{\circ}$ on $U_1^{\circ}$ such that
		\[
		K_{\mathcal{G}_1^{\circ}} + \Delta_1^{\circ} \sim (e_1^{\circ})^* K_{\mathcal{F}_1^{\circ}}.
		\]
		On the other hand, since $f^{\circ}$ is quasi-\'etale, we have $K_{\mathcal{F}_1^{\circ}} = (f^{\circ})^* K_{\mathcal{F}^{\circ}} \sim 0$.	So $\Delta_1^{\circ}$ is an integral effective Weil divisor. Let $E_1^{\circ}$ be a codimension $1$ irreducible component of $(g^{\circ})^{-1}(E^{\circ})$, where $E^{\circ}\coloneqq E\cap U^{\circ}$. Denote by $r$ the ramification index of $g^{\circ}$ along $E_1^{\circ}$. As $E$ is $p$-horizontal, we know that $E^{\circ}$ is not $\mathcal{G}^{\circ}$-invariant (see \cite[\S\,3.5]{Druel2021}). Then \cite[Lemma~3.4 (2)]{Druel2021} implies that
		\[
		\mult_{E_1^{\circ}} (K_{\mathcal{G}_1^{\circ}} - (g^{\circ})^* K_{\mathcal{G}^{\circ}}) = r-1.
		\]
		Here notice that $(g^{\circ})^* K_{\mathcal{G}^{\circ}}$ is a well-defined Weil divisor because $g^{\circ}$ is finite.
 As $K_{\mathcal{G}_1^{\circ}}+\Delta_1^{\circ}=(g^{\circ})^*(K_{\mathcal{G}^{\circ}}+\Delta^{\circ})$, where $\Delta^{\circ}\coloneqq \Delta|_{U^{\circ}}$, one gets
		\begin{equation}
			\label{eq.DiscreFiniteCover}
			\mult_{E_1^{\circ}} (\Delta_1^{\circ}) + r - 1 = r\mult_{E^{\circ}}(\Delta^{\circ}) = r\mult_E(\Delta).
		\end{equation}
		By Lemma \ref{lem.Exc-div-family-leaves}, we have $\mult_{E_1^{\circ}}(\Delta_1^{\circ})\geq 1$ as $\Delta_1^{\circ}$ is integral effective, so one derives $\mult_E(\Delta)\geq 1$ from \eqref{eq.DiscreFiniteCover}.
	\end{proof}

\subsection{Kawamata--Miyaoka type inequality for $\epsilon$-lc Fano varieties}

We start with the following observation, which is a consequence of Proposition~\ref{prop.ExcDivFamLeaves}.

\begin{prop}
	\label{prop.SlopeRkOneSubsheaf}
    Let $0< \epsilon\leq 1$ be a real number. Let $X$ be a $\mathbb Q$-factorial $\epsilon$-lc Fano variety of dimension $n\geq 2$ and of Picard number $1$. Then for any rank $1$ subsheaf $\mathcal{L}$ of $\mathcal{T}_X$, the divisor class $c_1(X)-(1+\epsilon)c_1(\mathcal{L})$ is nef.
\end{prop}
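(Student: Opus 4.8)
The plan is to pass to the saturation of $\mathcal L$ and to exploit that, since $\rho(X)=1$, a divisor class is nef if and only if its intersection with one ample curve class is nonnegative. First I would replace $\mathcal L$ by its saturation $\mathcal F$ inside $\mathcal T_X$. As $\mathcal F/\mathcal L$ is torsion, $c_1(\mathcal F)-c_1(\mathcal L)$ is represented by an effective divisor, hence a nonnegative multiple of $c_1(X)$ on a variety of Picard number $1$; so it suffices to treat $\mathcal F$. Being a saturated rank $1$ subsheaf, $\mathcal F$ is automatically a foliation, and $K_{\mathcal F}$ is $\mathbb Q$-Cartier since $X$ is $\mathbb Q$-factorial. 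Writing $c_1(\mathcal F)\equiv\lambda\,c_1(X)$, the case $\lambda\le 0$ is immediate since then $c_1(X)-(1+\epsilon)c_1(\mathcal F)\equiv(1-(1+\epsilon)\lambda)c_1(X)$ has nonnegative coefficient. Hence I may assume $-K_{\mathcal F}$ is ample.

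Next I would place $\mathcal F$ in the algebraically integrable setting. As $X$ is Fano it is rationally connected, and the restriction of $\mathcal F$ to a general free rational curve is a line bundle of positive degree on $\mathbb P^1$; by the Bogomolov--McQuillan positivity criterion $\mathcal F$ is algebraically integrable. Let $U\xrightarrow{e}X$ and $U\xrightarrow{p}T$ be the family of leaves \eqref{eq.familyofleaves}, let $F$ be a general fiber of $p$ and $\ell\coloneqq e(F)$ the corresponding leaf, so that $e|_F$ is birational onto $\ell$ and $\ell\subset X_{\reg}$. Write $K_U=e^*K_X+\sum_i a_iE_i$ with $a_i=a(E_i,X)\ge\epsilon-1$, and $K_{e^{-1}\mathcal F}+\Delta\sim_{\mathbb Q}e^*K_{\mathcal F}$ with $\Delta=\sum_i\delta_iE_i$ effective and $e$-exceptional. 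Since $N_{F/U}$ is trivial, $K_U\cdot F=\deg K_F=2g-2$, and since $e^{-1}\mathcal F$ restricts to $\mathcal T_F$ on $F$, $K_{e^{-1}\mathcal F}\cdot F=2g-2$; ampleness of $-K_{\mathcal F}$ then forces $g=0$ because $(-K_{\mathcal F})\cdot\ell=(2-2g)-\Delta\cdot F>0$. As $p$-vertical divisors are disjoint from $F$, projecting by $e$ yields
\[
(-K_X)\cdot\ell=2+\sum_{E_i\ p\text{-horiz.}}a_i\,(E_i\cdot F),\qquad (-K_{\mathcal F})\cdot\ell=2-\sum_{E_i\ p\text{-horiz.}}\delta_i\,(E_i\cdot F).
\]

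Now I would combine these. By Proposition~\ref{prop.ExcDivFamLeaves}, every $p$-horizontal $e$-exceptional prime divisor satisfies $\delta_i=\mult_{E_i}(\Delta)\ge 1$, while $a_i\ge\epsilon-1$, so $a_i+(1+\epsilon)\delta_i\ge 2\epsilon$ for each such $E_i$. Subtracting the two identities gives
\[
(-K_X)\cdot\ell-(1+\epsilon)(-K_{\mathcal F})\cdot\ell=\sum_{E_i\ p\text{-horiz.}}\big(a_i+(1+\epsilon)\delta_i\big)(E_i\cdot F)-2\epsilon\ \ge\ 2\epsilon\,(H-1),
\]
where $H\coloneqq\sum_{E_i\ p\text{-horiz.}}(E_i\cdot F)\ge 0$. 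Thus the desired inequality $(-K_X)\cdot\ell\ge(1+\epsilon)(-K_{\mathcal F})\cdot\ell$ follows as soon as $H\ge 1$, i.e. as soon as some $p$-horizontal $e$-exceptional divisor meets $F$.

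The main obstacle is to exclude $H=0$, which is the sharp boundary case. If $H=0$, the identities above force $(-K_X)\cdot\ell=(-K_{\mathcal F})\cdot\ell=2$, whence $c_1(\mathcal T_X/\mathcal F)\cdot\ell=0$ and, by $\rho(X)=1$, $c_1(\mathcal T_X/\mathcal F)\equiv 0$. Dualizing the nonzero torsion-free quotient $\mathcal T_X/\mathcal F$ of rank $n-1\ge 1$ produces a nonzero reflexive subsheaf of $\Omega^{[1]}_X$ with numerically trivial, hence pseudoeffective, determinant; this contradicts the theorem of Campana--Păun that on a rationally connected variety no subsheaf of $\Omega^{[1]}_X$ has pseudoeffective determinant. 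Therefore $H\ge 1$, and since $[\ell]$ is an ample curve class on the Picard-number-one variety $X$, the inequality $(-K_X)\cdot\ell\ge(1+\epsilon)(-K_{\mathcal F})\cdot\ell$ shows that $c_1(X)-(1+\epsilon)c_1(\mathcal F)$ is nef, completing the proof. The two places requiring genuine input are the algebraic integrability of $\mathcal F$ and this exclusion of the boundary case $H=0$.
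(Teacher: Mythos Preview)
Your proof is correct and follows the same route as the paper: pass to the saturation, reduce to the case where $-K_{\mathcal F}$ is ample, invoke the Bogomolov--McQuillan criterion for algebraic integrability, pass to the family of leaves, and compute on a general fiber $F\cong\mathbb P^1$ using Proposition~\ref{prop.ExcDivFamLeaves} together with the $\epsilon$-lc discrepancy bound. The only substantive difference is how you establish $H\ge 1$, i.e., the existence of a $p$-horizontal $e$-exceptional divisor. The paper deduces this directly from $\rho(X)=1$ and $\mathbb Q$-factoriality: for a general ample $H_T$ on $T$ the pushforward $e_*(p^*H_T)$ is a nonzero effective divisor on $X$, hence ample, so $e^*e_*(p^*H_T)\cdot F>0$; since $p^*H_T\cdot F=0$, the $e$-exceptional correction $e^*e_*(p^*H_T)-p^*H_T$ must have a component meeting $F$, which is then $p$-horizontal. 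Your alternative---deduce from $H=0$ that $c_1(\mathcal T_X/\mathcal F)\equiv 0$ and then appeal to Campana--P\u{a}un---also works, but note that the Campana--P\u{a}un theorem is usually stated for smooth varieties; in the klt setting the precise statement you need is $\mu_{\min}(\mathcal T_X)>0$, which is exactly \cite[Proposition~3.6]{LiuLiu2023} (already used in the proof of Theorem~\ref{thm.KMinqEpsilonLC}). The paper's argument at this step is thus slightly more self-contained, while yours imports a positivity result that the paper uses elsewhere anyway.
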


\begin{proof}
	Without loss of generality, we may assume that $c_1(\mathcal{L})$ is ample and $\mathcal{T}_X/\mathcal{L}$ is torsion-free, so $\mathcal{L}$ is a foliation on $X$. Moreover, by a criterion of Bogomolov and McQuillan (see \cite[Theorem~1]{KebekusSolaCondeToma2007}), the foliation $\mathcal{L}$ is algebraically integrable such that the closure of its general leaf is a rational curve. In other words, a general fiber of $p$ in the family of leaves \eqref{eq.familyofleaves} of $\mathcal{L}$ is isomorphic to $\mathbb{P}^1$. Moreover, since $\rho(X)=1$ and $X$ is $\mathbb{Q}$-factorial, there always exist $p$-horizontal $e$-exceptional prime divisors, namely $E_1,\dots,E_{m}$.
	
	Let $F\cong \mathbb{P}^1$ be a general fiber of $p$. Then $U$ is smooth along $F$. Write $K_{e^{-1}\mathcal{L}}+\Delta \sim_{\mathbb{Q}} e^*K_{\mathcal{L}}$. Then $E_i\cdot F$ is a positive integer as $E_i$ is Cartier along $F$. By Proposition~\ref{prop.ExcDivFamLeaves}, we have
	\[
	0 < -e^*K_{\mathcal{L}}\cdot F = -(K_{e^{-1}\mathcal{L}}+\Delta)\cdot F= 2 - \sum_{i=1}^m \mult_{E_i}(\Delta) E_i\cdot F \leq 2-m.
	\]
	It follows that $m=1$, $E_1\cdot F=1$ and $-e^*K_{\mathcal{L}}\cdot F\leq 1$. Now write $K_{U}=e^*K_X+D$ for some $e$-exceptional $\mathbb{Q}$-divisor $D$. As $X$ has $\epsilon$-lc singularities, the coefficients of $D$ are at least $\epsilon-1$. As a consequence, we obtain
	\[
	-e^*K_X\cdot F = (-K_U + D)\cdot F = 2 +\mult_{E_1}(D) E_1\cdot F \geq 1 + \epsilon \geq (1+\epsilon)(-e^*K_{\mathcal{L}}\cdot F). 
	\]
	Now the result follows as $\rho(X)=1$.
\end{proof}

Now we are in the position to finish the proof of Theorem \ref{thm.KMinqEpsilonLC}.
	
	\begin{proof}[Proof of Theorem \ref{thm.KMinqEpsilonLC}]
		By the $\mathbb{Q}$-Bogomolov--Gieseker inequality and \cite{GrebKebekusPeternell2021}*{Theorem 1.2}, we have
 \[
 c_1(X)^{n}<\frac{2n}{n-1} \hat{c}_2(X)\cdot c_1(X)^{n-2}\leq \frac{2(1+\epsilon)}{\epsilon} \hat{c}_2(X)\cdot c_1(X)^{n-2}
 \]
 if $\mathcal{T}_X$ is semistable with respect to $c_1(X)$.
 So we may assume that $\mathcal{T}_X$ is not semistable with respect to $c_1(X)$. Applying Theorem~\ref{thm.LangerIneqII} to $\mathcal{T}_X$ and $\alpha_i=c_1(X)$ yields
		\begin{align*}
			\hat{\Delta}(\mathcal{T}_X)\cdot c_1(X)^{n-2} + n\left(\mu_{\max}(\mathcal{T}_X)-\mu(\mathcal{T}_X)\right)
			& \geq \frac{n^2\left(\mu_{\max}(\mathcal{T}_X)-\mu(\mathcal{T}_X)\right) \cdot \mu_{\min}(\mathcal{T}_X)}{n\mu(\mathcal{T}_X)}.
		\end{align*}
		By \cite[Proposition~3.6]{LiuLiu2023}, we have $\mu_{\min}(\mathcal{T}_X)>0$. As $\mu_{\max}(\mathcal{T}_X)>\mu(\mathcal{T}_X)$, the above inequality yields that
		\begin{align*}
			\left(2\hat{c}_2(X) - c_1(X)^2\right)\cdot c_1(X)^{n-2} + \mu_{\max}(\mathcal{T}_X) > 0.
		\end{align*}
		Denote by $\mathcal{F}$ the maximal destabilizing subsheaf of $\mathcal{T}_X$. Then \cite[Proposition~3.6]{LiuLiu2023} implies that $c_1(\mathcal{T}_X)-c_1(\mathcal{F})$ is ample. Combining this fact with Proposition~\ref{prop.SlopeRkOneSubsheaf} yields
		\[
		\mu_{\max}(\mathcal{T}_X) = \mu(\mathcal{F})=\frac{c_1(\mathcal{F})\cdot c_1(X)^{n-1}}{\rank (\mathcal{F})} \leq \begin{dcases}\frac{1}{1+\epsilon} c_1(X)^n &\text{if } \rank (\mathcal{F})=1;\\
		   \frac{1}{2} c_1(X)^n &\text{if }  \rank (\mathcal{F})\geq 2,
		\end{dcases}
		\]
		and the desired inequality follows immediately.
	\end{proof}
	
	\begin{ex}
		Let $X=\mathbb{P}(1,1,d)$ be the cone over a rational normal curve of degree $d$. Then $X$ has $\frac{2}{d}$-lc singularities and $\mathcal{O}_X(-K_X)\simeq \mathcal{O}_X(d+2)$. Consider the foliation $\mathcal{L}$ defined by the projection to the first two coordinates $X\dashrightarrow \mathbb{P}^1$. Then $\mathcal{L}\simeq \mathcal{O}_X(d)$. Moreover, we have
		\[
		\hat{c}_2(X)=(2d+1)c_1(\mathcal{O}_X(1))^2\quad \text{and} \quad c_1(X)^2 = (d+2)^2c_1(\mathcal{O}_X(1))^2.
		\]
		This yields
		\[
		{\hat{c}_2(X)} = \frac{2d+1}{(d+2)^2}{c_1(X)^2}.
		\]
		So Proposition~\ref{prop.SlopeRkOneSubsheaf} is sharp and $\epsilon$-lc singularities can not be replaced by klt singularities in Theorem~\ref{thm.KMinqEpsilonLC}.
	\end{ex}

\subsection{Kawamata--Miyaoka type inequality for canonical Fano $3$-folds}

Throughout this subsection, we assume that $X$ is a $\mathbb{Q}$-factorial canonical Fano $3$-fold of Picard number $1$. As canonical singularities are $1$-lc singularities, Theorem~\ref{thm.KMinqEpsilonLC} implies that 
\[c_1(X)^3 < 4\hat{c}_2(X) \cdot c_1(X).\]
We slightly improve it by looking deeper into the Harder--Narasimhan filtration of $\mathcal{T}_X$.
	
\begin{thm}\label{thm.kmineqfor3fold}
 Let $X$ be a $\mathbb{Q}$-factorial canonical Fano $3$-fold of Picard number $1$. Let $q:=\qQ(X)$ be the $\mathbb{Q}$-Fano index of $X$. Then
 \[
 c_1(X)^3 \leq 
 \begin{dcases}
 \frac{16}{5} \hat{c}_2(X)\cdot c_1(X) & \text{if } q\leq 5;\\
 \frac{4q^2}{q^2 + 2q -4} \hat{c}_2(X)\cdot c_1(X)& \text{if } q\geq 6.
 \end{dcases}
 \]
\end{thm}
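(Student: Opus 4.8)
The plan is to refine the argument used in the proof of Theorem~\ref{thm.KMinqEpsilonLC}, specializing to $n=3$ and $\epsilon=1$, but extracting a sharper bound on $\mu_{\max}(\mathcal{T}_X)$ by tracking the structure of the Harder--Narasimhan filtration more carefully and by invoking the $\mathbb{Q}$-Fano index $q$. As before, if $\mathcal{T}_X$ is semistable with respect to $c_1(X)$ the $\mathbb{Q}$-Bogomolov--Gieseker inequality gives $c_1(X)^3<3\hat{c}_2(X)\cdot c_1(X)$, which is stronger than both claimed bounds, so I may assume $\mathcal{T}_X$ is unstable. Applying Theorem~\ref{thm.LangerIneqII} to $\mathcal{T}_X$ with all $\alpha_i=c_1(X)$ and using $\mu_{\min}(\mathcal{T}_X)>0$ (from \cite[Proposition~3.6]{LiuLiu2023}) yields, exactly as in the proof of Theorem~\ref{thm.KMinqEpsilonLC}, the inequality $\bigl(2\hat{c}_2(X)-c_1(X)^2\bigr)\cdot c_1(X)+\mu_{\max}(\mathcal{T}_X)>0$. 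Everything then reduces to producing a good upper bound for $\mu_{\max}(\mathcal{T}_X)=\mu(\mathcal{F})$, where $\mathcal{F}$ is the maximal destabilizing subsheaf.

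First I would record that $\rank(\mathcal{F})\in\{1,2\}$. In the rank-$1$ case Proposition~\ref{prop.SlopeRkOneSubsheaf} (with $\epsilon=1$) gives $\mu_{\max}(\mathcal{T}_X)\leq\tfrac12 c_1(X)^3$, and feeding this into the displayed inequality produces $c_1(X)^3<3\hat{c}_2(X)\cdot c_1(X)$, again comfortably inside both asserted bounds. So the essential case is $\rank(\mathcal{F})=2$, where $\mathcal{T}_X/\mathcal{F}$ has rank $1$; here I would bound $\mu(\mathcal{F})$ by bounding $c_1(\mathcal{F})\cdot c_1(X)^2$. Writing $c_1(X)=q A$ in $N^1(X)_{\mathbb{Q}}$ (recall $\rho(X)=1$, so $N^1$ is a line and $A$ is the $\mathbb{Q}$-Fano generator), every divisor class is a rational multiple of $A$, and one has integrality constraints: $c_1(\mathcal{T}_X)=c_1(X)=qA$ and $c_1(\mathcal{F})$ must be an \emph{integral} multiple of $A$ up to the torsion/Weil subtleties controlled by $\qW$ versus $\qQ$ (Lemma~\ref{lem ex torsion}). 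The point is that $\mu(\mathcal{F})=\tfrac12 c_1(\mathcal{F})\cdot c_1(X)^2<\tfrac12 c_1(X)^3$ strictly, and the gap is quantized in units of $\tfrac1q c_1(X)^3\cdot(\text{something})$; the largest admissible value of $c_1(\mathcal{F})$ is $c_1(X)-\tfrac{1}{q}c_1(X)=\tfrac{q-1}{q}c_1(X)$, because the quotient $\det(\mathcal{T}_X/\mathcal{F})$ is effective nonzero and hence at least $\tfrac1q c_1(X)$ as a class. This gives $\mu_{\max}(\mathcal{T}_X)=\tfrac12 c_1(\mathcal{F})\cdot c_1(X)^2\leq\tfrac{q-1}{2q}c_1(X)^3$, and substituting into $(2\hat{c}_2(X)-c_1(X)^2)\cdot c_1(X)+\mu_{\max}(\mathcal{T}_X)>0$ and solving for $c_1(X)^3$ yields
\[
c_1(X)^3\leq\frac{4q}{3q-(q-1)}\,\hat{c}_2(X)\cdot c_1(X)=\frac{4q}{2q+1}\,\hat{c}_2(X)\cdot c_1(X),
\]
which I would then compare against the two stated thresholds and optimize, using the case split $q\le 5$ versus $q\ge 6$ to absorb the constant $16/5$ in the low-index range.

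The main obstacle, and the step requiring the most care, is justifying the quantization bound $\det(\mathcal{T}_X/\mathcal{F})\geq\tfrac1q c_1(X)$ rigorously: this rests on showing that the cokernel line bundle's first Chern class is a \emph{positive integral} combination of the $\qW$-generator, which in turn forces me to reconcile $\qW(X)$ and $\qQ(X)=q$ via Lemma~\ref{lem ex torsion} and to handle possible torsion in $\Cl(X)$ carefully. A secondary subtlety is that $\mathcal{F}$ need not be locally free, so $c_1(\mathcal{F})$ must be interpreted as the first Chern class of its reflexive hull, and I must ensure the slope inequalities from Theorem~\ref{thm.LangerIneqII} and Proposition~\ref{prop.SlopeRkOneSubsheaf} remain valid in that reflexive setting (they do, since those results are stated for reflexive sheaves). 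Finally, I expect that obtaining the exact constants $\tfrac{16}{5}$ and $\tfrac{4q^2}{q^2+2q-4}$—rather than the cruder $\tfrac{4q}{2q+1}$ sketched above—will require using the full Harder--Narasimhan data (both $\mu_{\max}$ and $\mu_{\min}$, and the $\mathbb{Q}$-Bogomolov--Gieseker inequality applied to each graded piece $\mathcal{F}_i$) rather than the coarse estimate, so the delicate part is optimizing over the finitely many possible filtration types for each value of $q$.
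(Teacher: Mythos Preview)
Your overall strategy --- reduce to the unstable case and control the Harder--Narasimhan data using integrality and Proposition~\ref{prop.SlopeRkOneSubsheaf} --- is the right one, and your closing paragraph correctly identifies that the full HN filtration must be used. But the body of the argument has a genuine gap and an arithmetic slip that together mean the route via the inequality
\[
\bigl(2\hat{c}_2(X)-c_1(X)^2\bigr)\cdot c_1(X)+\mu_{\max}(\mathcal{T}_X)>0
\]
cannot reach the stated constants.

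First, the slip: in the rank-$1$ case, $\mu_{\max}\le \tfrac12 c_1(X)^3$ plugged into the displayed inequality yields $c_1(X)^3<4\,\hat c_2(X)\cdot c_1(X)$, not $3$. So this case is \emph{not} ``comfortably inside'' the bound $\tfrac{16}{5}$; it misses it. Second, in the rank-$2$ case your computation gives $c_1(X)^3<\tfrac{4q}{q+1}\hat c_2(X)\cdot c_1(X)$ (not $\tfrac{4q}{2q+1}$ as written), and for every $q\ge 5$ one has $\tfrac{4q}{q+1}>\tfrac{4q^2}{q^2+2q-4}$, so again the target is not reached. The underlying reason is structural: the inequality above was \emph{derived} in the proof of Theorem~\ref{thm.KMinqEpsilonLC} from Lemma~\ref{lem.LangerIneqI} via Theorem~\ref{thm.LangerIneqII} and the Hodge index theorem, and each step discards information. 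To get the sharp constants you must go back to Lemma~\ref{lem.LangerIneqI} itself.

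Concretely, the paper writes $c_1(\mathcal{F}_i)\equiv q_iA$ with $\sum q_i=q$ and splits into the three HN types $(l,r_1)\in\{(2,1),(2,2),(3,1)\}$. In each case Lemma~\ref{lem.LangerIneqI} gives
\[
6\hat c_2(X)\cdot c_1(X)-2c_1(X)^3\ \ge\ -\sum_{i<j}r_ir_j\Bigl(\frac{q_i}{r_i}-\frac{q_j}{r_j}\Bigr)^2 A^2\cdot c_1(X),
\]
and one then maximizes the right-hand quadratic in the $q_i$ under the integrality constraints $q_i\in\mathbb{Z}_{>0}$, the slope ordering, and (when $r_1=1$) the bound $2q_1\le q$ from Proposition~\ref{prop.SlopeRkOneSubsheaf}. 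For $(l,r_1)=(2,1)$ this maximum is $\tfrac18 c_1(X)^3$, giving exactly $\tfrac{16}{5}$; for $(2,2)$ one uses $q_1\le q-1$ to get $\tfrac{4q^2}{q^2+2q-3}$; and the case you omitted entirely, $(l,r_1)=(3,1)$ (which forces $q\ge 6$), is the one that produces $\tfrac{4q^2}{q^2+2q-4}$ and is the binding constraint for $q\ge 6$. Your worry about reconciling $\qW$ with $\qQ$ is unnecessary here: one works with the ample generator $A$ of $\Cl(X)/{\sim_{\mathbb{Q}}}$, so each $c_1(\mathcal{F}_i)$ is automatically an integer multiple of $A$ and $\sum q_i=q=\qQ(X)$ by definition.
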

	
	\begin{proof}
		By the $\mathbb{Q}$-Bogomolov--Gieseker inequality, we may assume that $\mathcal{T}_X$ is not semistable with respect to $c_1(X)$. Let $0=\mathcal{E}_0\subsetneq \mathcal{E}_1\subsetneq \dots \subsetneq \mathcal{E}_l=\mathcal{T}_X$ be its Harder--Narasimhan filtration. Then $2\leq l\leq 3$. Denote by $r_i$ the rank of $\mathcal{F}_i\coloneqq (\mathcal{E}_i/\mathcal{E}_{i-1})^{**}$ and by $q_i\geq 1$ the unique positive integer such that $c_1(\mathcal{F}_i)\equiv q_i A$, where $A$ is an ample generator of $\Cl (X)/\sim_{\mathbb{Q}}$ (see \cite[Proposition~3.6]{LiuLiu2023}). Then we have
		\[
		\sum_{i=1}^l r_i=3,\quad \sum_{i=1}^l q_i = q, \quad \text{and}\quad \frac{q_1}{r_1} > \dots > \frac{q_l}{r_l}>0.
		\]
		The proof will be divided into three different cases: $(l,r_1)=(2,1)$, $(2,2)$, or $(3,1)$.
		
		\begin{case}
			First we consider the case $(l,r_1)=(2,1)$. By Proposition~\ref{prop.SlopeRkOneSubsheaf}, we have $2q_1\leq q$. On the other hand, $q_1=\frac{q_1}{r_1}>\frac{q_2}{r_2}=\frac{q-q_1}{2}$ implies that $3q_1>q$. So Lemma~\ref{lem.LangerIneqI} implies that
			\[
			6\hat{c}_2(X)\cdot c_1(X) - 2 c_1(X)^3 
			\geq - \frac{(3q_1-q)^2}{2q^2} c_1(X)^3 \geq - \frac{1}{8} c_1(X)^3,
			\]
			which yields
			\[
			c_1(X)^3 \leq \frac{16}{5} \hat{c}_2(X)\cdot c_1(X).
			\] 
		\end{case}
		
		\begin{case}
			Next we consider the case $(l,r_1)=(2,2)$. Then \cite[Proposition~3.6]{LiuLiu2023} implies that $q_1<q$, and $\frac{q_1}{r_1}>\frac{q_2}{r_2}$ implies that $3q_1>2q$. By Lemma~\ref{lem.LangerIneqI} again, one gets
			\[
			6 \hat{c}_2(X)\cdot c_1(X) - 2c_1(X)^3
			\geq -\frac{(3q_1-2q)^2}{2q^2} c_1(X)^3 \geq -\frac{(q-3)^2}{2q^2} c_1(X)^3,
			\]
 where we used the fact that $q_1\leq q-1$. 
Then 
			\[
			c_1(X)^3 \leq \frac{4q^2}{q^2 + 2q- 3} \hat{c}_2(X)\cdot c_1(X).
			\]
		\end{case}
		
		\begin{case}
			Finally we consider the case $(l,r_1)=(3,1)$. As $q_1>q_2>q_3\geq 1$, we get $q\geq 6$ in this case. By Proposition~\ref{prop.SlopeRkOneSubsheaf}, we have $2q_1\leq q$. Lemma~\ref{lem.LangerIneqI} yields
			\begin{align*}
				6\hat{c}_2(X)\cdot c_1(X) - 2 c_1(X)^3 
				& \geq - \left((q_1-q_2)^2 + (2q_1+q_2-q)^2 + (q_1+2q_2-q)^2\right) A^2\cdot c_1(X) \\
				& \geq - \left(\left(\frac{q}{2} - q_2\right)^2 + q_2^2 + \left(2q_2-\frac{q}{2}\right)^2\right) A^2\cdot c_1(X) \\
				& = - \left(6 q_2^2 - 3q q_2 + \frac{q^2}{2}\right)\cdot \frac{1}{q^2} c_1(X)^3 \\
				& \geq - \frac{q^2-6q+12}{2q^2} c_1(X)^3,
			\end{align*}
			where the last inequality follows from $2\leq q_2\leq q_1-1\leq q/2-1$. So we obtain
			\[
			c_1(X)^3 \leq \frac{4q^2}{q^2 + 2q- 4} \hat{c}_2(X)\cdot c_1(X).
			\]
		\end{case}
		We finish the proof by comparing the inequalities in the three cases above. 
	\end{proof}

\section{Degrees, Fano indices and Chern classes}

\subsection{Connection between degrees and Fano indices}\label{sec.lowerbound}

In this subsection, we study the connection between degrees and Fano indices for canonical Fano varieties. 

\begin{lem}[{cf. \cite[Lemma~2.3]{Jiang2016}}]\label{lem DD is integer}
 Let $X$ be a normal projective variety of dimension $n\geq 2$.
 Let $D_1,\dots, D_{n-2}$ be Cartier divisors on $X$ and let $D, D'$ be Weil divisors on $X$ such that $D$ is Cartier in codimension $2$. Then
 \[
 (D_1\cdot D_2\cdots D_{n-2}\cdot D\cdot D')\in \mathbb{Z}. 
 \]
\end{lem}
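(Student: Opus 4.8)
The plan is to reduce the intersection number to an intersection of a \emph{Cartier} divisor with a Weil divisor on a normal surface, where integrality is immediate. First I would exploit multilinearity of the intersection product in the Cartier slots $D_1,\dots,D_{n-2}$: on a projective variety every Cartier divisor is a difference of two very ample ones, since for a fixed very ample $A$ and $m\gg 0$ one has $D_i=(D_i+mA)-mA$ with both $D_i+mA$ and $mA$ very ample. Expanding by multilinearity, it suffices to treat the case in which each $D_i$ is very ample.

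Next, let $Z\subset X$ be the closed subset of codimension $\geq 3$ outside of which $D$ is Cartier. Choosing general members $H_i\in|D_i|$, Bertini's theorem together with Seidenberg's theorem on normality of general hyperplane sections guarantees that $S\coloneqq H_1\cap\cdots\cap H_{n-2}$ is a normal projective surface meeting $\Supp(D)$ and $\Supp(D')$ properly (when $n=2$ the list of $D_i$ is empty and one simply takes $S=X$, which is already a surface on which $D$ is automatically Cartier). The crucial point is a dimension count: since $\dim Z\leq n-3$ and $S$ is cut out by $n-2$ general very ample divisors, one has $\dim(S\cap Z)\leq (n-3)-(n-2)<0$, so $S\cap Z=\emptyset$ for a general choice of the $H_i$. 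Hence $D|_S$ is a genuine Cartier divisor on $S$, and $D'|_S$ is a well-defined $1$-cycle on $S$.

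Finally, by compatibility of the intersection number with restriction to general members of very ample systems (the projection formula), I expect
\[
D_1\cdots D_{n-2}\cdot D\cdot D' = (D|_S)\cdot (D'|_S),
\]
the intersection being computed on $S$. Writing $D'|_S=\sum_j n_j C_j$ as a $\mathbb{Z}$-linear combination of integral curves, the right-hand side equals $\sum_j n_j\deg\!\left(\mathcal{O}_S(D|_S)|_{C_j}\right)$, which lies in $\mathbb{Z}$ because the degree of a line bundle on a proper curve is an integer.

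The step requiring the most care—and the main obstacle—is justifying the identity $D_1\cdots D_{n-2}\cdot D\cdot D'=(D|_S)\cdot(D'|_S)$, namely that successive cutting by general members of the very ample systems genuinely computes the intersection number, together with checking that the generic surface $S$ is normal and meets $\Supp(D')$ properly so that $D'|_S$ is the expected cycle. Once this reduction is in place, everything else is the elementary dimension count ensuring $S\cap Z=\emptyset$ and the standard integrality of degrees of line bundles on curves.
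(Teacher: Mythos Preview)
Your proposal is correct and follows essentially the same strategy as the paper: reduce to a surface by cutting with general very ample divisors, where $D$ becomes Cartier and the intersection with the Weil divisor $D'$ is then an integer. The only cosmetic difference is that the paper proceeds by induction on $n$, writing $D_1=H_1-H_2$ with $H_i$ general very ample and restricting one divisor at a time, whereas you cut all the way down to $S$ at once; your dimension count ensuring $S\cap Z=\emptyset$ is precisely what (implicitly) makes the paper's induction hypothesis applicable at each step.
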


\begin{proof}
If $n=2$, then clearly $(D\cdot D')\in \mathbb{Z}$ as $D$ is a Cartier divisor and $D'$ is a Weil divisor.

In general, we can express $D_1=H_1-H_2$ where $H_1$ and $H_2$ are general very ample divisors on $X$.
Then by induction on $n$, for $i=1,2$,
\[(H_i\cdot D_2\cdots D_{n-2}\cdot D\cdot D')=(D_2|_{H_i}\cdots D_{n-2}|_{H_i}\cdot D|_{H_i}\cdot D'|_{H_i})\in \mathbb{Z}. \]
This proves the assertion.
\end{proof}

\begin{thm}\label{thm.key}
 Let $r$ and $q$ be positive integers. Let $X$ be a canonical Fano variety of dimension $n\geq 2$ such that $-rK_X$ is Cartier. 
 Let $A$ be a $\mathbb{Q}$-Cartier Weil divisor such that $-K_X\equiv qA$. Take $J_A$ to be the smallest positive integer such that $J_AA$ is Cartier in codimension $2$. 
 Then 
 \begin{enumerate}
 \item $ J_Ar^{n-2}(-K_X)^n/q^2$ is a positive integer; 

 \item if $-K_X\sim qA$, then $J_A\mid q$ and $q \mid r^{n-2}(-K_X)^n$.
 \end{enumerate}
\end{thm}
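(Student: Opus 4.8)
The plan is to reduce both statements to Lemma~\ref{lem DD is integer} by rewriting the relevant intersection numbers as a product of $n-2$ Cartier divisors with two Weil divisors, one of which is Cartier in codimension~$2$.

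For part~(1), I would first use that intersection numbers only depend on numerical classes, so from $-K_X\equiv qA$ I may replace two of the $n$ factors of $(-K_X)^n$ by $qA$, obtaining $(-K_X)^n=q^2\,(-K_X)^{n-2}\cdot A\cdot A$. Absorbing the factor $r^{n-2}$ into the remaining $n-2$ anticanonical factors gives
\[
\frac{J_A r^{n-2}(-K_X)^n}{q^2}=J_A\,r^{n-2}(-K_X)^{n-2}\cdot A\cdot A=(-rK_X)^{n-2}\cdot(J_AA)\cdot A.
\]
Here $-rK_X$ is Cartier by hypothesis, $J_AA$ is Cartier in codimension~$2$ by the very definition of $J_A$, and $A$ is a Weil divisor; thus Lemma~\ref{lem DD is integer} applies and the quantity is an integer. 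It is positive because it equals $\tfrac{J_A r^{n-2}}{q^2}(-K_X)^n$ and $(-K_X)^n>0$, which finishes~(1).

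For part~(2) the key input will be that \emph{$K_X$ is Cartier in codimension~$2$} for a canonical variety, equivalently that the non-Gorenstein locus of $X$ has codimension $\geq 3$. In the threefold case this is immediate from Reid's basket, whose points are isolated; in general it follows by taking a general complete intersection surface section $S=H_1\cap\dots\cap H_{n-2}$, which is again canonical, hence Du Val, hence Gorenstein, so that localizing at the generic point of a codimension-$2$ subvariety yields a $2$-dimensional canonical (Du Val, Gorenstein) singularity. Granting this, since $-K_X\sim qA$ the class $qA$ is Cartier in codimension~$2$; because the set $\{m\in\mathbb Z\mid mA\ \text{Cartier in codim }2\}$ is a subgroup of $\mathbb Z$, it equals $J_A\mathbb Z$, and $qA$ Cartier in codimension~$2$ gives $J_A\mid q$. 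For the second divisibility I would now invoke that $-K_X$ itself is Cartier in codimension~$2$: replacing one factor of $(-K_X)^n$ by $qA$ yields
\[
r^{n-2}(-K_X)^n=q\,(-rK_X)^{n-2}\cdot(-K_X)\cdot A,
\]
and Lemma~\ref{lem DD is integer}, applied with the codimension-$2$-Cartier divisor $-K_X$ and the Weil divisor $A$, shows the remaining factor is an integer, whence $q\mid r^{n-2}(-K_X)^n$. (Alternatively, this divisibility drops out formally from~(1) together with $J_A\mid q$.)

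The main obstacle is precisely the statement that $K_X$ is Cartier in codimension~$2$ for canonical $X$; everything else is bookkeeping layered on top of Lemma~\ref{lem DD is integer}. I would take care to justify it at the level of the class $qA\sim -K_X$ on $X$ itself, rather than only after restriction to a surface section, and to keep the distinction between numerical equivalence (sufficient and all that is used in~(1) and in the computations of~(2)) and linear equivalence (needed in~(2) to transport the Cartier-in-codimension-$2$ property between $qA$ and $-K_X$).
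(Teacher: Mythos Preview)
Your argument is correct and essentially identical to the paper's: both prove~(1) by rewriting $J_Ar^{n-2}(-K_X)^n/q^2=(-rK_X)^{n-2}\cdot J_AA\cdot A$ and invoking Lemma~\ref{lem DD is integer}, and both prove~(2) by noting that $-K_X$ (hence $qA$) is Cartier in codimension~$2$ for canonical $X$---the paper cites \cite[Corollary~5.18]{KollarMori1998} for this, where you sketch the surface-section argument---so that $J_A\mid q$, and then $q\mid r^{n-2}(-K_X)^n$ follows formally from~(1). The paper in fact leaves the last divisibility implicit, so your write-up is slightly more detailed there.
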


\begin{proof}
By Lemma~\ref{lem DD is integer}, \[\frac{J_Ar^{n-2}}{q^2}(-K_X)^n=(-rK_X)^{n-2}\cdot J_AA\cdot A\] is an integer, and it is positive as $-K_X$ is ample. This proves the first assertion.

For the second assertion, just notice that $qA\sim -K_X$ is Cartier in codimension $2$ by \cite[{Corollary~5.18}]{KollarMori1998}, hence $J_A\mid q$ by the definition of $J_A$. 
\end{proof}


In practice, we just take $r=r_X$ and $q=\qW(X)$ in Theorem~\ref{thm.key}.

\begin{ex}[{\cite[{Table~1}]{DedieuSernesi2023}}]\label{ex.gor}
$X=\mathbb P(1,6,14,21)$ is a Gorenstein weighted projective $3$-fold. Then $-K_X\sim 42A$ where $A=\mathcal{O}_X(1)$ and $J_A=\qW(X)=r_X(-K_X)^3=42$.
\end{ex}

\subsection{Difference between Chern classes and generalized Chern classes}

In this subsection, we study the difference between Chern classes and generalized Chern classes for canonical Fano $3$-folds. 
 
\begin{defn}\label{def ecgc}
Let $X$ be a normal projective $3$-fold with canonical singularities.
For an irreducible curve $C\subset \textrm{Sing}(X)$, we say that $C\subset X$ is {\it of type $\mathsf{T}$} if at a general point of $C$, $X$ is analytically isomophic to $\mathbb{A}^1\times S_C$ where $S_C$ is a Du Val singularity of type $\mathsf{T}$, or equivalently, for a general hyperplane $H$ on $X$, $H$ has Du Val singularities of type $\mathsf{T}$ in a neighborhood of any point of $H\cap C$. Here $\mathsf{T}\in \{\mathsf{A}_n, \mathsf{D}_m, \mathsf{E}_k\mid n\geq 1, m\geq 4, k=6,7,8\}.$ 

We define $e_C$ to be $1$ plus the number of exceptional curves on the minimal resolution of $S_C$, and define $g_C$ to be 
 the order of the local fundamental group of $S_C$, and define $j_C$ to be 
 the order of the Weil divisor class group of $S_C$ (see \cite[{Remark~4.2.9}]{Kawakita2024}). 
 Namely, 
\begin{align}(e_C, g_C, j_C)=\begin{cases} (n+1, n+1, n+1) & \text{ if } C\subset X \text{ is of type } \mathsf{A}_n;\\
(m+1, 4m-8, 4) & \text{ if } C\subset X \text{ is of type } \mathsf{D}_m;\\
(7, 24, 3) & \text{ if } C\subset X \text{ is of type } \mathsf{E}_6;\\
(8, 48, 2) & \text{ if } C\subset X \text{ is of type } \mathsf{E}_7;\\
(9, 120,1) & \text{ if } C\subset X \text{ is of type } \mathsf{E}_8.
 \end{cases}\label{eq ecgc}
\end{align}
\end{defn}

We have the following easy inequality. 
\begin{lem}\label{lem j<e-1/g}
 In \eqref{eq ecgc}, we have $j_C-\frac{1}{j_C}\leq e_C-\frac{1}{g_C}.$
\end{lem}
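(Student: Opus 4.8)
The plan is to reduce the five-case table to two elementary comparisons and then exploit the monotonicity of the map $t\mapsto t-\tfrac1t$ on $(0,\infty)$. Concretely, I would first record the two inequalities
\[
j_C\le e_C \qquad\text{and}\qquad j_C\le g_C,
\]
both of which can be read off directly from \eqref{eq ecgc}. Granting these, the desired bound follows at once from the chain
\[
e_C-\frac{1}{g_C}\;\ge\; j_C-\frac{1}{g_C}\;\ge\; j_C-\frac{1}{j_C},
\]
where the first step uses $e_C\ge j_C$ and the second uses $g_C\ge j_C$ (so that $\tfrac1{g_C}\le\tfrac1{j_C}$). The point of splitting the estimate this way is that it avoids having to bound the mixed quantity $e_C-\tfrac1{g_C}$ directly; instead one only compares $j_C$ separately against $e_C$ and against $g_C$.

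It then remains to verify the two inequalities $j_C\le e_C$ and $j_C\le g_C$ type by type. For $\mathsf A_n$ one has $e_C=g_C=j_C=n+1$, so both are equalities and the lemma itself holds with equality. For $\mathsf D_m$ with $m\ge 4$ one has $e_C=m+1\ge 5$ and $g_C=4m-8\ge 8$, both exceeding $j_C=4$. For $\mathsf E_6,\mathsf E_7,\mathsf E_8$ the values $j_C=3,2,1$ are visibly dominated by $e_C=7,8,9$ and by $g_C=24,48,120$, respectively. This completes the verification.

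The argument presents no real obstacle: the only point requiring attention is the standing constraint $m\ge 4$ in type $\mathsf D_m$, which guarantees $g_C=4m-8>0$ as well as $j_C=4\le e_C=m+1$. The genuine content is simply the observation that separating the inequality into the two monotone pieces above turns a brute-force five-case estimate into two transparent comparisons.
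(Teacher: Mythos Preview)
Your proof is correct. The paper itself provides no proof at all for this lemma, labeling it simply an ``easy inequality'' and leaving the verification to the reader; presumably a direct check of the five cases is intended. Your argument is a slight refinement of such a check: rather than comparing the mixed quantities $j_C-\tfrac{1}{j_C}$ and $e_C-\tfrac{1}{g_C}$ directly in each case, you extract the two cleaner inequalities $j_C\le e_C$ and $j_C\le g_C$ from the table and then combine them via the monotonicity of $t\mapsto t-\tfrac1t$. This is a nice organizing observation that makes the underlying reason transparent (the Weil divisor class group order is bounded by both the Milnor number plus one and the fundamental group order), though in the end both approaches amount to reading off the table \eqref{eq ecgc}.
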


\begin{thm}\label{thm.diffcandgc}
 Let $X$ be a normal projective $3$-fold with canonical singularities. 
 Then for any $\mathbb{Q}$-Cartier $\mathbb{Q}$-divisor $H$ on $X$, we have
 \[
 c_2(X)\cdot H-\hat c_2(X) \cdot H=\sum_{C\subset \text{\rm Sing}(X)}\left(e_C-\frac{1}{g_C}\right)(H\cdot C),
 \]
 where the sum runs over irreducible curves $C\subset \text{\rm Sing}(X)$ and $e_C, g_C$ are defined in Definition~\ref{def ecgc}.
\end{thm}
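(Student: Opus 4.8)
The plan is to reduce the asserted identity to a two-dimensional statement about Du Val surfaces by cutting $X$ with a general hyperplane, and then to compare the ordinary Euler number with the orbifold Euler number of such a surface.

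First I would reduce to the case where $H$ is a general very ample divisor. Both sides are $\mathbb{Q}$-linear in the class of $H$ (the left because $c_2(X)\cdot(-)$ and $\hat c_2(X)\cdot(-)$ are linear, the right because $H\mapsto H\cdot C$ is) and each term depends only on the numerical class of $H$; since very ample classes span $N^1(X)_{\mathbb Q}$, it suffices to treat a general member $S:=H$ of a very ample system. By the description of curves of type $\mathsf T$ in Definition~\ref{def ecgc} together with Bertini, such an $S$ is a normal surface whose singular points are exactly the points of $\bigsqcup_{C}(H\cap C)$: at each of the $H\cdot C$ points of $H\cap C$ the surface $S$ acquires a Du Val singularity of the same type as $C$, and $S$ avoids the finitely many isolated singular points of $X$.

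Next I would establish two parallel adjunction formulas. Fix a resolution $f\colon Y\to X$ that is crepant over the generic point of each curve $C$, i.e.\ the minimal resolution of the transverse Du Val singularity in codimension two. For general $S$ one has $f^{*}H=\tilde S$, the strict transform, which is disjoint from the point-exceptional divisors and realizes the minimal resolution $\mu\colon\tilde S\to S$. Applying the Whitney formula to $0\to\mathcal T_{\tilde S}\to\mathcal T_{Y}|_{\tilde S}\to\mathcal O_{\tilde S}(\tilde S)\to0$, together with $K_{\tilde S}=\mu^{*}K_S$ (crepancy) and $\tilde S|_{\tilde S}=\mu^{*}(H|_S)$ and the projection formula, gives
\[
c_2(X)\cdot H=c_2(Y)\cdot f^{*}H=e(\tilde S)-K_{\tilde S}\cdot(\tilde S|_{\tilde S})=c_2(S)-K_S\cdot (H|_S),
\]
where $c_2(S):=e(\tilde S)$ is the Euler number of the minimal resolution. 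Running the same computation inside the $\mathbb{Q}$-variety structure of \cite{GrebKebekusPeternellTaji2019a} on $X\setminus Z$ — where $S$ inherits a compatible structure because it is transverse to the $\mathbb{A}^{1}$-factor of the local product $\mathbb{A}^1\times S_C$, so the orbifold normal bundle is the honest Cartier class $H|_S$ and $\hat c_1(\mathcal T_S)=-K_S$ — yields the orbifold adjunction $\hat c_2(X)\cdot H=\hat c_2(S)-K_S\cdot(H|_S)$ with the \emph{same} correction term. Subtracting, the normal-bundle contributions cancel and I obtain $c_2(X)\cdot H-\hat c_2(X)\cdot H=c_2(S)-\hat c_2(S)$.

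It then remains to compute this surface difference. Resolving a Du Val point of type $\mathsf T$ replaces it by a tree of $e_p-1$ rational curves and so raises the Euler number by $e_p-1$, giving $c_2(S)=e(S)+\sum_{p}(e_p-1)$; on the other hand $\hat c_2(S)$ is the orbifold Euler number $e(S)-\sum_{p}(1-\tfrac1{g_p})$, since a quotient point whose local fundamental group has order $g_p$ contributes $1/g_p$ in place of $1$. Hence $c_2(S)-\hat c_2(S)=\sum_{p\in\Sing(S)}(e_p-\tfrac1{g_p})$, and grouping the singular points of $S$ by the curve $C$ containing them — there are $H\cdot C$ of them, each of type $\mathsf T_C$ with invariants $e_C,g_C$ — produces exactly $\sum_{C}(e_C-\tfrac1{g_C})(H\cdot C)$. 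The main obstacle is the orbifold adjunction in the third paragraph: whereas the ordinary case is a direct Whitney-formula computation on the resolution $Y$, the class $\hat c_2$ exists only through the $\mathbb{Q}$-variety structure of \cite{GrebKebekusPeternellTaji2019a}, so one must verify that cutting by a general $H$ is compatible with that structure and that the orbifold Whitney sequence holds with the expected Cartier normal bundle — equivalently, one must carry out the local computation of $\hat c_2(X)\cdot H$ at the generic point of each $C$ in the quotient chart $\mathbb{A}^1\times(\mathbb C^2/G)$, the one place where the orbifold formalism, rather than an honest resolution, is genuinely required.
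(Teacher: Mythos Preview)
Your argument is correct and follows essentially the same route as the paper's own proof: reduce to a general very ample $H$ by linearity, construct a resolution inducing the minimal resolution of $H$, apply adjunction on both the resolution and in the $\mathbb{Q}$-variety setting so that the normal-bundle terms cancel, and then compare $e(\tilde S)$ with the orbifold Euler number of $S$. The one step you flag as the obstacle---compatibility of the $\mathbb{Q}$-variety structure with restriction to $H$ and the orbifold Whitney sequence $0\to\mathcal T_H\to\mathcal T_X|_H\to\mathcal O_H(H)\to0$---is handled in the paper by invoking \cite[Proposition~3.11]{GrebKebekusPeternellTaji2019a} and \cite[Lemmas~2.1, 2.2]{Kawamata1992a}, and the identification $\hat c_2(S)=e_{\mathrm{orb}}(S)$ by \cite[Definition~10.7, Theorem~10.8]{1992}.
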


\begin{proof}
Note that the desired equality is linear in $H$. 
As any $\mathbb{Q}$-Cartier $\mathbb{Q}$-divisor can be written as a linear combination of very ample divisors, 
we may assume that $H$ is very ample and sufficiently general. By Bertini's theorem, $H$ has canonical singularities. 

 Let $f\colon Y\to X$ be a resolution of singularities such that $f|_{H'}\colon H'\to H$ is the minimal resolution of $H$, where $H'=f^*H=f^{-1}(H)$.
 Here we explain that such resolution exists: 
first take a terminalization $f_1\colon Y_1\to X$ where $Y_1$ has terminal singularities and $f_1^*K_X=K_{Y_1}$ (see \cite[Theorem~6.23]{KollarMori1998}), then take $f_2\colon Y\to Y_1$ to be a resolution of singularities which is isomorphic outside $\textrm{Sing}(Y_1)$. Recall that $\textrm{Sing}(Y_1)$ is a finite set of points. Then as $H$ is a general hyperplane section, $f_1$ induces the minimal resolution $H'\to H$ and $f_2$ induces an isomorphism over $H'$.

 Then $c_2(X)\cdot H=c_2(Y)\cdot H'=c_2(\mathcal T_Y|_{H'})$.
 From the exact sequence
 \[
 0\to \mathcal T_{H'}\to \mathcal T_Y|_{H'}\to \mathcal O_{H'}(H')\to 0,
 \]
 we have 
 \begin{align}
 c_2(\mathcal T_Y|_{H'})=c_2(\mathcal T_{H'})+c_1(\mathcal T_{H'})\cdot H'|_{H'}. \label{eq c21}
 \end{align} 

According to \cite[Proposition~3.11]{GrebKebekusPeternellTaji2019a}, the surface $H$ admits a structure of a $\mathbb{Q}$-variety, compatible with that over a big open subset of $X$, such that the following sequence of $\mathbb{Q}$-vector bundles 
 \[
 0\to \mathcal T_H\to \mathcal T_X|_H\to \mathcal O_H(H)\to 0
 \]
 is $\mathbb{Q}$-exact (cf. \cite{Kawamata1992a}*{\S 2} and \cite[Construction 3.8]{GrebKebekusPeternellTaji2019a}) and $\hat c_2(X)\cdot H=\hat c_2(\mathcal T_X|_H)$. By \cite{Kawamata1992a}*{Lemmas 2.1 and 2.2}, we have 
\begin{align}
 \hat c_2(\mathcal T_X|_H)=\hat c_2(\mathcal T_H)+\hat c_2(\mathcal O_H(H)) + c_1(\mathcal T_{H})\cdot H|_H.
\label{eq c22}
 \end{align} 
 Notice that $\hat c_2(\mathcal O_H(H))=0$ as $\mathcal{O}_H(H)$ is invertible and $c_1(\mathcal T_{H'})\cdot H'|_{H'}=c_1(\mathcal T_{H})\cdot H|_H$ by the projection formula. Hence by \eqref{eq c21} and \eqref{eq c22}, we have
 \begin{align}\label{eq c2-c2}
 c_2(X)\cdot H-\hat c_2(X)\cdot H=c_2(\mathcal T_{H'})-\hat c_2(\mathcal T_H).
 \end{align}
 By \cite[{Definition~10.7, Theorem~10.8}]{1992}, we have 
 \begin{align}\label{eq c2=eorb}
 \hat c_2(\mathcal T_H)=e_{orb}(H)=e_{top}(H)-\sum_{C\subset \text{\rm Sing}(X)}\left(1-\frac{1}{g_C}\right)(C \cdot H).
 \end{align}
 For an irreducible curve ${C\subset \text{\rm Sing}(X)}$, over any point in $C\cap H$, the exceptional set of $f|_{H'}\colon H'\to H$ is a tree of $e_C-1$ rational curves, whose topological Euler number is $e_C$. So we get
 \begin{align}\label{eq etop-etop}
e_{top}(H')-e_{top}(H)=\sum_{C\subset \text{\rm Sing}(X)}(e_C-1)(C \cdot H).
 \end{align}
 As $c_2(\mathcal T_{H'})=e_{top}(H')$, the conclusion follows from combining \eqref{eq c2-c2}, \eqref{eq c2=eorb}, and \eqref{eq etop-etop}.
\end{proof}

Applying Theorem~\ref{thm.diffcandgc} to $H=-r_XK_X$, we get the following by Theorem~\ref{thm.kmineqfor3fold}: 

\begin{cor}\label{cor.rangeofJ}
 Let $X$ be a $\mathbb Q$-factorial canonical Fano $3$-fold of Picard number $1$. Let $A$ be an ample Weil divisor generating $\Cl (X)/\sim_{\mathbb{Q}}$. Take $J_A$ to be the smallest positive integer such that $J_AA$ is Cartier in codimension $2$. Let $J_A=p_1^{a_1}p_2^{a_2}\cdots p_k^{a_k}$ be the prime factorization, where $p_i$ are distinct prime numbers. Then
 \begin{align}
 \sum_{i=1}^k\left(p_i^{a_i}-\frac{1}{p_i^{a_i}}\right)< r_Xc_2(X) \cdot c_1(X)- \frac{r_X}{4}c_1(X)^3.\label{eq upper J1}
 \end{align}
 Moreover, if $q:=\qQ(X)\geq 6$, then
 \begin{align}\sum_{i=1}^k\left(p_i^{a_i}-\frac{1}{p_i^{a_i}}\right)\leq r_Xc_2(X) \cdot c_1(X)- \frac{r_X(q^2+2q-4)}{4q^2}c_1(X)^3.\label{eq upper J2}
 \end{align}
\end{cor}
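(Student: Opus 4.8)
The plan is to combine the three structural results already established—Theorem~\ref{thm.kmineqfor3fold} (the Kawamata--Miyaoka type inequality), Theorem~\ref{thm.diffcandgc} (the difference formula between $c_2$ and $\hat c_2$), and Lemma~\ref{lem j<e-1/g} (the elementary singularity inequality)—and to control the left-hand side $\sum_i (p_i^{a_i} - p_i^{-a_i})$ by relating it, through $J_A$, to the Du Val singularities along curves in $\operatorname{Sing}(X)$. First I would apply Theorem~\ref{thm.diffcandgc} with $H = -r_X K_X = r_X c_1(X)$, which is Cartier by the definition of the Gorenstein index $r_X$, to obtain
\[
 r_X c_2(X)\cdot c_1(X) - r_X\hat c_2(X)\cdot c_1(X) = \sum_{C\subset \operatorname{Sing}(X)} \Bigl(e_C - \frac{1}{g_C}\Bigr)(r_X c_1(X)\cdot C).
\]
Then Theorem~\ref{thm.kmineqfor3fold} gives a lower bound for $r_X\hat c_2(X)\cdot c_1(X)$ in terms of $c_1(X)^3$: in the regime $q\le 5$ one has $\hat c_2(X)\cdot c_1(X) \ge \tfrac{5}{16}c_1(X)^3$, hence $\tfrac{1}{4}c_1(X)^3 < \hat c_2(X)\cdot c_1(X)$, and substituting yields
\[
 \sum_{C} \Bigl(e_C - \frac{1}{g_C}\Bigr)(r_X c_1(X)\cdot C) < r_X c_2(X)\cdot c_1(X) - \frac{r_X}{4}c_1(X)^3,
\]
with the analogous (non-strict) bound $\tfrac{q^2+2q-4}{4q^2}c_1(X)^3 \le \hat c_2(X)\cdot c_1(X)$ driving \eqref{eq upper J2} when $q\ge 6$.

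The crux is therefore to show that the left-hand side of the desired inequality is dominated by the geometric sum over singular curves, that is,
\[
 \sum_{i=1}^k \Bigl(p_i^{a_i} - \frac{1}{p_i^{a_i}}\Bigr) \le \sum_{C\subset \operatorname{Sing}(X)} \Bigl(e_C - \frac{1}{g_C}\Bigr)(r_X c_1(X)\cdot C).
\]
For this I would first bound the inner sum below by invoking Lemma~\ref{lem j<e-1/g}, which replaces $e_C - \tfrac{1}{g_C}$ by the smaller quantity $j_C - \tfrac{1}{j_C}$, where $j_C$ is the order of the local Weil divisor class group of the Du Val singularity $S_C$. The function $t\mapsto t - t^{-1}$ is increasing and superadditive on integers in the sense that $\sum_i (p_i^{a_i} - p_i^{-a_i})$ is bounded by a single term $J_A - J_A^{-1}$ reorganized over prime-power pieces, so the real content is a divisibility/local-to-global argument: the number $J_A$, being the smallest integer making $J_A A$ Cartier in codimension $2$, measures exactly the failure of $A$ to be Cartier along the singular curves, and this failure is governed by the local class groups of order $j_C$.

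The main obstacle I expect is precisely this last matching step—showing that each prime power $p_i^{a_i}$ dividing $J_A$ is ``witnessed'' by a singular curve $C$ whose local class-group order $j_C$ is divisible by $p_i^{a_i}$ and which meets $r_X c_1(X)$ positively. Concretely, since $A$ fails to be Cartier in codimension $2$ exactly along some union of curves in $\operatorname{Sing}(X)$, the image of $A$ in the local class group $\operatorname{Cl}(S_C)\cong \mathbb Z/j_C$ has some order, and $J_A$ is the least common multiple of these local orders; one must then distribute the prime-power factors $p_i^{a_i}$ among distinct curves (or bound them against a single curve) and check that $r_X c_1(X)\cdot C \ge 1$ for each contributing curve, using that $r_X c_1(X)$ is an ample Cartier divisor so the intersection number is a positive integer. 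Once this combinatorial accounting is in place together with Lemma~\ref{lem j<e-1/g}, chaining the inequalities produces \eqref{eq upper J1}, and the sharper Theorem~\ref{thm.kmineqfor3fold} bound for $q\ge 6$ produces \eqref{eq upper J2}; I would handle the strict versus non-strict distinction by tracking whether the Kawamata--Miyaoka inequality is strict (it is, for $q\le 5$) or merely non-strict.
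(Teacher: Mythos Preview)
Your proposal is correct and follows essentially the same approach as the paper's proof: apply Theorem~\ref{thm.diffcandgc} with $H=-r_XK_X$, use $(-r_XK_X)\cdot C\geq 1$ to drop the intersection multiplicities, invoke Lemma~\ref{lem j<e-1/g} to pass from $e_C-1/g_C$ to $j_C-1/j_C$, observe that $J_A\mid\lcm\{j_C\}$ so each $p_i^{a_i}$ divides some $j_C$ (the paper cites \cite[Page~65, (2.2)]{ChenJiang2016} for the resulting inequality, which is exactly the combinatorial accounting you describe), and finish with Theorem~\ref{thm.kmineqfor3fold}. One small correction: the strictness in \eqref{eq upper J1} does not come from the Kawamata--Miyaoka inequality itself (Theorem~\ref{thm.kmineqfor3fold} is stated with $\leq$) but from the gap $\tfrac{5}{16}>\tfrac14$ when $q\leq 5$ and $\tfrac{q^2+2q-4}{4q^2}>\tfrac14$ when $q\geq 6$, so \eqref{eq upper J1} holds strictly for all $q$.
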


\begin{proof}
Applying Theorem~\ref{thm.diffcandgc} to $H=-r_XK_X$, as $(-r_XK_X)\cdot C\geq 1$ for any curve $C$ on $X$, 
we have 
\begin{align}
 \sum_{C\subset \text{\rm Sing}(X)}\left(e_C-\frac{1}{g_C}\right) \leq 
 r_Xc_2(X) \cdot c_1(X)-r_X\hat c_2(X) \cdot c_1(X). \label{eq J1.1}
\end{align}
 By Lemma~\ref{lem j<e-1/g}, we have 
\begin{align} \sum_{C\subset \text{\rm Sing}(X)}\left(e_C-\frac{1}{g_C}\right)\geq \sum_{C\subset \text{\rm Sing}(X)}\left(j_C-\frac{1}{j_C}\right).
\label{eq J1.2}
\end{align}
By the definitions of $J_A$ and $j_C$, we know that $J_A$ divides $\lcm\{j_C\mid C\subset \text{\rm Sing}(X)\}$, so each $p_i^{a_i}$ divides at least one $j_C$ for some $C\subset \text{\rm Sing}(X)$. So by \cite[Page~65, (2.2)]{ChenJiang2016}, 
\begin{align}
 \sum_{C\subset \text{\rm Sing}(X)}\left(j_C-\frac{1}{j_C}\right)\geq \sum_{i=1}^k\left(p_i^{a_i}-\frac{1}{p_i^{a_i}}\right).\label{eq J1.3}
\end{align}
So we get the desired inequalities by combining \eqref{eq J1.1}, \eqref{eq J1.2}, \eqref{eq J1.3}, and Theorem~\ref{thm.kmineqfor3fold}. 
\end{proof}

\begin{lem}\label{lem J<9.5}
 Let $J=p_1^{a_1}p_2^{a_2}\cdots p_k^{a_k}$ be the prime factorization of a positive integer $J$, where $p_i$ are distinct prime numbers.
 If $ \sum_{i=1}^k(p_i^{a_i}-\frac{1}{p_i^{a_i}})<9.5$,
 then
 \[J\in \{1,2,3,4,5,6,7,8,9,10,12,14,15,20,30\}.\]
\end{lem}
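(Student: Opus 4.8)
The plan is to treat this as an elementary finite check built on the monotonicity of the auxiliary function $f(m) := m - \tfrac{1}{m}$, which is strictly increasing for real $m \geq 1$. First I would record the values of $f$ on the small prime powers as exact fractions:
\[
f(2) = \tfrac32,\quad f(3) = \tfrac83,\quad f(4) = \tfrac{15}{4},\quad f(5) = \tfrac{24}{5},\quad f(7) = \tfrac{48}{7},\quad f(8) = \tfrac{63}{8},\quad f(9) = \tfrac{80}{9},
\]
and note that $f(9) = \tfrac{80}{9} < 9.5$ while the next prime power $11$ already gives $f(11) = \tfrac{120}{11} > 9.5$. Since every summand $f(p_i^{a_i})$ is positive and their total is $< 9.5$, each individual term is $< 9.5$; by monotonicity of $f$ this forces $p_i^{a_i} \le 9$, so each prime-power factor lies in $\{2,3,4,5,7,8,9\}$. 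In particular the primes $p_i$ occurring in $J$ are confined to $\{2,3,5,7\}$, and for each such prime at most one prime power may appear, namely one element of $\{2,4,8\}$, $\{3,9\}$, $\{5\}$, $\{7\}$ respectively.

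Next I would carry out the finite enumeration over all ways of selecting at most one prime power from each of these four pools, writing $J$ as the product of the chosen powers and testing the constraint $\sum f < 9.5$ with exact arithmetic. The admissible combinations produce exactly the fifteen values $\{1,2,3,4,5,6,7,8,9,10,12,14,15,20,30\}$; everything larger is ruled out because adjoining another factor pushes the sum over the threshold (for instance $8\cdot 3 = 24$ gives $\tfrac{63}{8}+\tfrac83 = \tfrac{253}{24} > 9.5$, and $9\cdot 2 = 18$ gives $\tfrac{80}{9}+\tfrac32 = \tfrac{187}{18} > 9.5$).

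The delicate point, and the only place where rounding could mislead, is the boundary case $J = 21 = 3\cdot 7$, whose sum is $f(3)+f(7) = \tfrac83 + \tfrac{48}{7} = \tfrac{200}{21} \approx 9.524$, strictly exceeding $9.5$, so $21$ must be excluded; by contrast $J = 30 = 2\cdot 3\cdot 5$ yields $f(2)+f(3)+f(5) = \tfrac32 + \tfrac83 + \tfrac{24}{5} = \tfrac{269}{30} < 9.5$ and is retained. For this reason I would conduct the whole enumeration with common-denominator fractions rather than decimals, so that no inequality is decided by an approximate value. The main obstacle is thus not conceptual but purely bookkeeping: organizing the $4\cdot 3\cdot 2\cdot 2 = 48$ candidate selections and verifying each comparison exactly, paying special attention to the near-threshold cases $18$, $21$, and $24$.
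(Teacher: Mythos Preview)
Your proposal is correct and follows essentially the same approach as the paper: first observe that each prime-power factor $p_i^{a_i}$ must lie in $\{2,3,4,5,7,8,9\}$ since $f(m)=m-\tfrac1m$ is increasing and $f(11)>9.5$, and then finish by direct enumeration. The paper's own proof is a two-line remark to this effect; your version simply supplies the explicit fraction computations (including the borderline exclusions of $18$, $21$, $24$) that the paper leaves to the reader.
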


\begin{proof}
 It is clear that $p_i^{a_i}\in \{2,3,4,5,7,8,9\}$. So all possibilities of $J$ can be obtained by a direct computation.
\end{proof}

 \section{Upper bound of degrees}

In this section, we study the upper bound for degrees of $\mathbb Q$-factorial canonical Fano $3$-folds of Picard number $1$. 

First we give a reduction which works for all canonical weak Fano $3$-folds with large $\qQ(X)$.
\begin{lem}\label{lem modification}
 Let $X$ be a canonical weak Fano $3$-fold with $\qQ(X)\geq 7$. 
 Suppose that $\Cl (X)$ has an $s$-torsion element, where $s\geq 1$ is a positive integer.
 Then there exsits a $3$-fold $X'$ with the following properties:
 \begin{enumerate}
 \item $X'$ is a $\mathbb Q$-factorial canonical Fano $3$-fold of Picard number $1$;
 
 \item $\qQ(X')\geq \qQ(X)\geq 7$;

 \item $(-K_{X'})^3\geq s(-K_X)^3$. 
 \end{enumerate}
\end{lem}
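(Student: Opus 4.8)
The plan is to first absorb the torsion into a multiplicative gain in the degree by passing to a cyclic cover, and then to reduce the Picard number to $1$ by running a minimal model program, using the hypothesis $\qQ(X)\geq 7$ in an essential way to prevent the program from terminating in a genuine fibration.

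First I would use the $s$-torsion element $T\in \Cl(X)$ to build the associated quasi-\'etale cyclic cover $\pi\colon \tilde X\to X$ of degree $s$ (the normalized cover attached to the algebra $\bigoplus_{i=0}^{s-1}\mathcal O_X(iT)$, using $sT\sim 0$). Since $\pi$ is \'etale in codimension $1$ it is crepant, $K_{\tilde X}=\pi^*K_X$, so $\tilde X$ is again canonical and $-K_{\tilde X}=\pi^*(-K_X)$ is nef and big; thus $\tilde X$ is a canonical weak Fano $3$-fold. Pulling back a relation $-K_X\sim_{\mathbb Q}\qQ(X)A$ gives $-K_{\tilde X}\sim_{\mathbb Q}\qQ(X)\,\pi^*A$, so $\qQ(\tilde X)\geq \qQ(X)\geq 7$, and the projection formula yields $(-K_{\tilde X})^3=\deg(\pi)\,(-K_X)^3=s(-K_X)^3$. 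This reduces the statement to the following: out of a canonical weak Fano $3$-fold $W$ with $\qQ(W)\geq 7$, produce a $\mathbb Q$-factorial canonical Fano $3$-fold of Picard number $1$ whose degree and $\mathbb Q$-Fano index do not drop.

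Next I would take a small $\mathbb Q$-factorialization of $\tilde X$, which is crepant and an isomorphism in codimension $1$, hence preserves canonicity, $(-K)^3$, and $\qQ$; call it $W$. Then I run a $K_W$-MMP, which terminates for $3$-folds. Because $-K_W$ is big, $K_W$ is not pseudoeffective, so the program ends with a Mori fiber space $\psi\colon Z\to S$. Along the way canonical singularities and $\mathbb Q$-factoriality are preserved; $\qQ$ is nondecreasing (invariant under flips, which are isomorphisms in codimension $1$, and nondecreasing under divisorial contractions by pushing forward the relation $-K\sim_{\mathbb Q}\qQ A$); and the anti-canonical volume is nondecreasing. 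Indeed, for a divisorial contraction $\phi$ one has $\phi^*(-K)=-K+aE$ with $a>0$ and $E$ effective, while for a flip the discrepancies increase, so on a common resolution the pullback of $-K$ from the flipped model dominates that from the source; in both cases monotonicity of $\Vol(\cdot)$ under adding an effective divisor gives $\Vol(-K_Z)\geq \Vol(-K_W)=(-K_W)^3=s(-K_X)^3$.

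The crux is to force $\dim S=0$: then $Z$ is a $\mathbb Q$-factorial canonical Fano of Picard number $1$ with $-K_Z$ ample, so $(-K_Z)^3=\Vol(-K_Z)\geq s(-K_X)^3$ and $\qQ(Z)\geq \qQ(W)\geq 7$, and I take $X'=Z$. Suppose instead $\dim S\geq 1$. A general fiber $F$ of $\psi$ is a canonical Fano variety of dimension $1$ or $2$, and restricting $-K_Z\sim_{\mathbb Q}\qQ(Z)A$ (with $-K_Z|_F=-K_F$ by adjunction on a general fiber) gives $-K_F\sim_{\mathbb Q}\qQ(Z)(A|_F)$, whence $\qQ(F)\geq \qQ(Z)\geq 7$. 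But if $\dim S=2$ then $F\cong\mathbb P^1$ and $\qQ(F)=2$, while if $\dim S=1$ then $F$ is a canonical, hence Gorenstein, del Pezzo surface, whose $\mathbb Q$-Fano index is at most $6$; either way $\qQ(F)<7$, a contradiction. I expect this final step---ruling out positive-dimensional $S$, i.e.\ forcing Picard number $1$ rather than a del Pezzo or conic fibration---to be the main obstacle, since it is the only place where $\qQ(X)\geq 7$ is used, and it rests on the index bound for canonical del Pezzo surfaces (which can be reproved from $\qQ(F)\mid (-K_F)^2\leq 9$ together with the short list of Gorenstein del Pezzo surfaces of degree $7,8,9$).
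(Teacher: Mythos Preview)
Your proposal is correct and follows essentially the same route as the paper: cyclic cover from the torsion element, $\mathbb{Q}$-factorialization, run a $K$-MMP to a Mori fiber space, and use $\qQ\geq 7$ to exclude a positive-dimensional base. The only differences are in what is outsourced to the literature: where you argue directly that the anticanonical volume is nondecreasing along each MMP step, the paper cites \cite[Lemma~4.4]{Jiang2021}; and where you sketch the bound $\qQ(F)\leq 6$ for canonical del Pezzo surfaces via $\qQ(F)\mid(-K_F)^2\leq 9$ plus the short list in degrees $7,8,9$, the paper cites \cite[Proposition~3.3]{WangCX2024}.
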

\begin{proof}
Set $q\coloneq \qQ(X)$. Then we have $-K_X\sim_{\mathbb{Q}}qA$ where $A$ is a nef and big $\mathbb{Q}$-Cartier Weil divisor.

The $s$-torsion Weil divisor in $\Cl (X)$ induces a quasi-\'etale morphism $\pi\colon W\to X$ with $\deg \pi=s$ (\cite[Definition~2.52]{KollarMori1998}). In particular, $\pi^*K_X=K_W$. Hence $W$ is a canonical weak Fano $3$-fold by \cite[Proposition~5.20]{KollarMori1998} and $(-K_{W})^3=s(-K_{X })^3$. 

Let $\phi\colon W'\to W$ be a $\mathbb{Q}$-factorialization such that $W'$ is a $\mathbb{Q}$-factorial canonical weak Fano $3$-fold with $-K_{W'}\sim_{\mathbb{Q}}q \phi^* \pi^*A$ where $\phi^* \pi^*A$ is a Weil divisor as $\pi$ is finite and $\phi$ is small. We can run a $K$-MMP on $W'$ which ends up with a Mori fiber space $X'\to T$ where $X'$ is $\mathbb{Q}$-factorial and canonical. Then $-K_{X'}\sim_{\mathbb{Q}}q A'$ where $A'$ is the strict transform of $\phi^* \pi^*A$ on $X'$.

If $\dim T\geq 1$, then for a general fiber $F$ of $X'\to T$, we have $-K_F\sim_{\mathbb{Q}}q A'|_F$ and $F$ is either $\mathbb{P}^1$ or a canonical del Pezzo surface, but this contradicts \cite[Proposition~3.3]{WangCX2024} as $q\geq 7$.

Hence $\dim T=0$, which means that $X'$ is a $\mathbb Q$-factorial canonical Fano $3$-fold of Picard number $1$. From the construction, we get $ \qQ(X')\geq q$, and 
\[
 (-K_{X'})^3\geq (-K_{W'})^3=(-K_{W})^3=s(-K_{X })^3,
\] 
where the first inequality is by \cite[Lemma~4.4]{Jiang2021}.
\end{proof}

\begin{prop}\label{prop modification}
 Let $X$ be a canonical weak Fano $3$-fold with $\qQ(X)\geq 7$.
 Then there exists a $3$-fold $Y$ with the following properties:
 \begin{enumerate}
 \item $Y$ is a $\mathbb Q$-factorial canonical Fano $3$-fold of Picard number $1$;

 \item $\Cl (Y)$ is torsion-free;
 
 \item $\qW(Y)=\qQ(Y)\geq \qQ(X)\geq 7$;

 \item $(-K_Y)^3\geq (-K_X)^3$. 
 \end{enumerate}
\end{prop}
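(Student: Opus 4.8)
The plan is to iterate Lemma~\ref{lem modification} until the class group becomes torsion-free, while controlling the degree from below and keeping the $\mathbb{Q}$-Fano index large. First I would start from $X$ itself; if $\Cl(X)$ is already torsion-free we may simply take $Y=X$, so assume there is nontrivial torsion. The key observation is that Lemma~\ref{lem modification} takes a canonical weak Fano $3$-fold $X$ with $\qQ(X)\geq 7$ and an $s$-torsion element, and produces a $\mathbb{Q}$-factorial canonical Fano $3$-fold $X'$ of Picard number $1$ with $\qQ(X')\geq \qQ(X)\geq 7$ and $(-K_{X'})^3\geq s(-K_X)^3$. Since $X'$ is again a canonical weak Fano $3$-fold with $\qQ(X')\geq 7$, we are in a position to reapply the lemma if $\Cl(X')$ still has torsion.

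The main issue is to guarantee that this process terminates, that is, that after finitely many applications we reach a variety with torsion-free class group. For this I would take $s$ in each step to be the full torsion order of $\Cl(X^{(i)})$ (or a maximal prime-power torsion order), so that the quasi-\'etale cover $\pi$ kills as much torsion as possible at once; alternatively one processes one prime at a time. The termination argument should rest on a numerical bound: at each stage the degree strictly increases by a factor of $s\geq 2$ whenever nontrivial torsion is removed, while the degree of a canonical Fano $3$-fold is bounded above by an absolute constant (for instance by the bound $324$ of Jiang--Zou cited in the introduction, or by the degree bounds available once $\qQ\geq 7$). Hence the torsion order $s$ can be $\geq 2$ only finitely many times, and the procedure must stop at a variety $Y$ whose class group is torsion-free.

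Once $\Cl(Y)$ is torsion-free, property~(2) holds by construction, and then $\qW(Y)=\qQ(Y)$ follows from the general fact recorded in \S\ref{subsec.fi} that $\qW=\qQ$ when $\Cl(X)$ is torsion-free. Properties~(1) and~(3) are preserved at every step by Lemma~\ref{lem modification}(1),(2), and property~(4) follows because the degree is nondecreasing throughout: $(-K_Y)^3\geq (-K_X)^3$ since each step multiplies the degree by $s\geq 1$. The hard part will be setting up the termination of the iteration cleanly, in particular verifying that after removing the torsion detected by $\pi$ the successive varieties genuinely have strictly smaller torsion (so that we do not loop), which relies on the quasi-\'etale cover associated to a torsion class trivializing exactly that class; combined with the absolute upper bound on degrees, this forces the process to halt.
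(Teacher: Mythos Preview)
Your approach is essentially the paper's own: iterate Lemma~\ref{lem modification} and use the absolute degree bound $(-K_{X'})^3\leq 324$ from \cite{JiangZou2023} to force termination. Two points deserve correction.

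First, there is a small but genuine gap in your opening move. You write that if $\Cl(X)$ is already torsion-free one may simply take $Y=X$. But $X$ is only assumed to be a canonical \emph{weak} Fano $3$-fold; it need not be $\mathbb{Q}$-factorial, Fano, or of Picard number~$1$, so property~(1) can fail for $X$. The fix is immediate: Lemma~\ref{lem modification} allows $s=1$ (the zero class is a $1$-torsion element), so one applies the lemma once with $s=1$ to replace $X$ by a $\mathbb{Q}$-factorial canonical Fano $3$-fold of Picard number~$1$ without losing degree or index, and only then begins the torsion-killing iteration.

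Second, your termination argument is cleaner than you fear. You do \emph{not} need to verify that the quasi-\'etale cover strictly shrinks the torsion subgroup, nor do you need to choose $s$ to be the full torsion order. It suffices to observe that whenever $\Cl(X^{(i)})$ has nontrivial torsion one may take $s\geq 2$, so the degree at least doubles; since the degree is bounded above by $324$, this can happen at most finitely many times, and the terminal variety $Y$ must have torsion-free class group (else one could double the degree again). The paper's proof consists of exactly this sentence.
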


\begin{proof}
We can apply Lemma~\ref{lem modification} repeatedly to get such $Y$ with $\Cl (Y)$ torsion-free. Here this process stops as $(-K_{X'})^3\leq 324$ for a canonical Fano $3$-fold $X'$ by \cite[{Theorem~1.1}]{JiangZou2023}.
\end{proof}

\begin{lem}\label{lem.upbisoforq<=6}
 Let $X$ be a $\mathbb Q$-factorial canonical Fano $3$-fold of Picard number $1$ with $\qQ(X)\leq 6$. Then $(-K_X)^3\leq 72$, and the equality holds if and only if $X\cong \mathbb P(1,1,1,3)$. 
\end{lem}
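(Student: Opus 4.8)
The plan is to use the Kawamata--Miyaoka type inequality from Theorem~\ref{thm.kmineqfor3fold} to bound $c_1(X)^3=(-K_X)^3$ in terms of $c_2(X)\cdot c_1(X)$, and then to feed this into Reid's Riemann--Roch formula \eqref{eq.range}, which controls $c_2(X)\cdot c_1(X)$ via the basket contribution $\sum_{r_i}(r_i-1/r_i)$. First I would set $A$ to be an ample Weil divisor generating $\Cl(X)/\sim_{\mathbb{Q}}$ and write $-K_X\equiv qA$ with $q=\qQ(X)\leq 6$. By Theorem~\ref{thm.key}(1), the quantity $J_A r_X^{n-2}(-K_X)^3/q^2$ is a positive integer, which constrains $(-K_X)^3$ to a discrete set once $J_A$, $r_X$, and $q$ are pinned down. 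The goal is to show that among all admissible numerical possibilities, the degree never exceeds $72$, with equality forced only by the weighted projective space $\mathbb{P}(1,1,1,3)$.

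The key analytic input is Corollary~\ref{cor.rangeofJ}: applying Theorem~\ref{thm.kmineqfor3fold} in the range $q\leq 5$ gives $c_1(X)^3<\frac{16}{5}\hat{c}_2(X)\cdot c_1(X)$, whence the difference $c_2(X)\cdot c_1(X)-\frac14 c_1(X)^3$ bounds $\sum_i(p_i^{a_i}-1/p_i^{a_i})$ from above, and by Lemma~\ref{lem J<9.5} this pins $J_A$ to the explicit finite list $\{1,2,\dots,30\}$. Combining \eqref{eq.range} (which gives $c_2(X)\cdot c_1(X)\leq 24$) with the Kawamata--Miyaoka inequality yields $c_1(X)^3<\frac{16}{5}\cdot 24=\frac{384}{5}=76.8$, already close to $72$. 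The strategy is then to run a case analysis over the finitely many values of $(q,J_A,r_X)$ compatible with these bounds, using the integrality of $J_A r_X^{n-2}(-K_X)^3/q^2$ to list the finitely many candidate degrees, and to exclude every value strictly between $72$ and the crude bound $76.8$. For the equality case $(-K_X)^3=72$ I would identify the resulting numerical invariants with those of $\mathbb{P}(1,1,1,3)$ (where $q=\qQ=6$, so this sits at the boundary of the range and I would treat $q=6$ with the sharper inequality $\frac{4q^2}{q^2+2q-4}=\frac{144}{44}=\frac{36}{11}$), and then invoke a rigidity/classification argument to conclude the variety is actually $\mathbb{P}(1,1,1,3)$.

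I would organize the argument by splitting on $q=\qQ(X)\in\{1,2,3,4,5,6\}$. For small $q$ the Fano index is close to $1$ and the inequality $(-K_X)^3<\frac{16}{5}c_2(X)\cdot c_1(X)\leq 76.8$ together with the integrality constraints and the Riemann--Roch formula \eqref{eq.RR-Fano} (forcing $h^0(X,-K_X)\in\mathbb{Z}_{\geq 0}$ and hence $\frac12 c_1(X)^3+3-\sum\frac{b_i(r_i-b_i)}{2r_i}\in\mathbb{Z}_{\geq 0}$) should rule out degrees above $72$ directly. The largest degrees are expected to occur at the boundary $q=6$, which is exactly where $\mathbb{P}(1,1,1,3)$ lives: here $-K_X\sim 6A$, and I would use Theorem~\ref{thm.key}(2) to get $q\mid r_X(-K_X)^3$ and $J_A\mid q$, sharply restricting the possible degrees to complete the identification.

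The hard part will be the equality case and the borderline value $q=6$: the Kawamata--Miyaoka inequality alone gives a strict bound when $\mathcal{T}_X$ is unstable, so to reach equality $(-K_X)^3=72$ one must control the semistable situation and the exact slopes in the Harder--Narasimhan filtration, then translate the resulting Chern-class equalities into a statement forcing $X\cong\mathbb{P}(1,1,1,3)$. Establishing this isomorphism --- rather than merely matching numerical invariants --- is where I expect to need either an explicit analysis of $\Cl(X)$, the basket, and $h^0(X,-K_X)$ via \eqref{eq.RR-Fano}, or a reference to the classification of canonical Fano $3$-folds of index $6$. Pinning down that the equality is rigid, and not achieved by any other degeneration, is the main obstacle.
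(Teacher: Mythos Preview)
Your overall strategy---Kawamata--Miyaoka plus Reid's formula \eqref{eq.range} plus integrality---is the right shape, and for the non-Gorenstein case it matches the paper's approach, though you bring in more machinery than needed. The paper does not touch $J_A$, Corollary~\ref{cor.rangeofJ}, or Lemma~\ref{lem J<9.5} in this lemma at all. For $q\leq 5$ and $X$ non-Gorenstein it simply notes $\sum_{r_i}(r_i-1/r_i)\geq 3/2$, so $c_2c_1\leq 22.5$ and the $16/5$ bound gives $c_1^3\leq 72$; equality forces $B_X=\{(2,1)\}$, which contradicts integrality in \eqref{eq.RR-Fano}. For $q=6$ a similar short computation pins down $r_Xc_1^3=145$, hence $\qW(X)=1$, and then Lemma~\ref{lem ex torsion} plus an index-one cover produce a canonical Fano $3$-fold of degree $>324$, contradicting \cite{JiangZou2023}.

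The genuine gap in your plan is the Gorenstein case, which you never isolate. When $r_X=1$ the basket is empty, \eqref{eq.range} gives $c_2c_1=24$ on the nose, and the KM bound yields only $c_1^3<76.8$ (or $<78.6$ for $q=6$); \eqref{eq.RR-Fano} merely says $c_1^3$ is even. If moreover $\Sing(X)$ contains no curve then $J_A=1$, so your integrality constraint $J_A r_X c_1^3/q^2\in\mathbb Z$ collapses to $q^2\mid c_1^3$, which for $q\in\{1,2\}$ does not rule out $c_1^3\in\{74,76\}$. The paper closes this by directly invoking Prokhorov's classification \cite[Theorem~1.5]{Prokhorov2005} of Gorenstein canonical Fano $3$-folds with $(-K_X)^3\geq 72$. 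That same theorem is also what delivers the equality characterization $X\cong\mathbb P(1,1,1,3)$: numerical matching cannot produce an isomorphism, so your unspecified ``rigidity/classification argument'' must in the end be Prokhorov's theorem or something of equivalent strength. (The paper does sketch, in a later remark, a way to recover the bound $\leq 72$ without Prokhorov---via Theorem~\ref{thm.diffcandgc} when $\Sing(X)$ contains a curve, and via stability of $\mathcal T_X$ plus a parity lemma when it does not---but even that route differs from your $J_A$ enumeration, and it does not recover the equality case.)
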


\begin{proof}
 If $X$ is Gorenstein, then by \cite[Theorem~1.5]{Prokhorov2005}, we have $(-K_X)^3\leq 72$ and the equality holds only if $X\cong \mathbb P(1,1,1,3)$ or $\mathbb P(1,1,4,6)$, where $\qQ(X)=6$ or $12$ respectively. 

 From now on, assume that $(-K_X)^3\geq 72$ and $X$ is not Gorenstein, that is, $\mathcal{R}_X\neq \emptyset$.
 
 If $\qQ(X)\leq 5$, then by Theorems~\ref{thm.kmineqfor3fold} and~\ref{thm.diffcandgc}, we have
 \[
 22.5\leq \frac{5}{16} c_1(X)^3\leq c_2(X)\cdot c_1(X)\stackrel{\eqref{eq.range}}{=} 24-\sum_{r_i\in \mathcal{R}_X}(r_i-\frac{1}{r_i}) \leq 24-\frac{3}{2}=22.5. 
 \]
Then $c_1(X)^3=72$, $\mathcal{R}_X=\{2\}$, and $B_X=\{(2,1)\}$. But then $h^0(X,-K_X)=36+3-1/4\in \mathbb Z$ by \eqref{eq.RR-Fano}, which is absurd.

 If $\qQ(X)=6$, then by Theorems~\ref{thm.kmineqfor3fold} and~\ref{thm.diffcandgc}, we have
 \[
 22\leq \frac{11}{36}c_1(X)^3\leq c_2(X)\cdot c_1(X) \stackrel{\eqref{eq.range}}{=}24-\sum_{r_i\in \mathcal{R}_X}(r_i-\frac{1}{r_i})\leq 22.5.
 \]
 It follows that $\mathcal{R}_X=\{2\}$. Then the above inequality and 
 \eqref{eq.RR-Fano} implies that $r_Xc_1(X)^3=145.$
 By Theorem~\ref{thm.key}, 
 $\qW(X)\mid r_Xc_1(X)^3$ while $\qW(X)\mid \qQ(X)=6$, so $\qW(X)=1$. Hence there exists a torsion element in $\Cl (X)$ with order $6$ by Lemma~\ref{lem ex torsion}. Then by the index one cover,
 there is a canonical Fano $3$-fold $X'$ with $(-K_{X'})^3\geq 6(-K_X)^3>324$, contradicting \cite[{Theorem~1.1}]{JiangZou2023}.
\end{proof}

\begin{thm}\label{thm.sharpboundofdegree}
 Let $X$ be a $\mathbb Q$-factorial canonical Fano $3$-fold of Picard number $1$. Then $(-K_X)^3\leq 72$ and the equality holds if and only if $X\cong \mathbb P(1,1,1,3)$ or $\mathbb P(1,1,4,6)$.
\end{thm}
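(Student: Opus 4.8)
The plan is to split the argument according to the $\mathbb{Q}$-Fano index $q:=\qQ(X)$. When $q\le 6$, Lemma~\ref{lem.upbisoforq<=6} already gives $(-K_X)^3\le 72$ with equality exactly for $X\cong\mathbb P(1,1,1,3)$, so that range is settled. The bulk of the work is $q\ge 7$. Here I would first reduce to the case that $\Cl(X)$ is torsion-free: Proposition~\ref{prop modification} produces a $\mathbb Q$-factorial canonical Fano $3$-fold $Y$ of Picard number $1$ with $\Cl(Y)$ torsion-free, $\qQ(Y)\ge q\ge 7$, and $(-K_Y)^3\ge (-K_X)^3$. Since $X$ is already $\mathbb Q$-factorial of Picard number $1$, we may take $Y=X$ whenever $\Cl(X)$ is torsion-free; otherwise the $s$-torsion invoked in Lemma~\ref{lem modification} has order $s\ge 2$, giving $(-K_Y)^3\ge s(-K_X)^3\ge 2(-K_X)^3$. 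Thus it suffices to prove the bound for torsion-free $Y$, and moreover the equality $(-K_X)^3=72$ can only occur when $\Cl(X)$ is torsion-free with $X=Y$ (any nontrivial torsion would force $(-K_Y)^3\ge 2(-K_X)^3>72$).

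From now on I assume $\Cl(X)$ is torsion-free and $q\ge 7$, so $\qW(X)=\qQ(X)=q$ and $-K_X\sim qA$ for an ample generator $A$ of $\Cl (X)/\sim_{\mathbb{Q}}$. I would separate the Gorenstein and non-Gorenstein cases. If $X$ is Gorenstein ($r_X=1$), Prokhorov's theorem \cite{Prokhorov2005} gives $(-K_X)^3\le 72$ with equality only for $\mathbb P(1,1,1,3)$ (index $6$) or $\mathbb P(1,1,4,6)$ (index $12$); as $q\ge 7$, the equality case here is precisely $\mathbb P(1,1,4,6)$. It then remains to prove the \emph{strict} inequality $(-K_X)^3<72$ when $X$ is non-Gorenstein, which excludes equality outside $\mathbb P(1,1,4,6)$.

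For the non-Gorenstein case I argue by contradiction, assuming $d:=(-K_X)^3\ge 72$. The sharpened Kawamata--Miyaoka inequality (Theorem~\ref{thm.kmineqfor3fold}, case $q\ge 6$), together with $\hat c_2(X)\cdot c_1(X)\le c_2(X)\cdot c_1(X)$ (Theorem~\ref{thm.diffcandgc}) and Reid's formula \eqref{eq.range}, yields $c_2(X)\cdot c_1(X)=24-\sum_{r_i\in\mathcal R_X}(r_i-\tfrac1{r_i})\ge \tfrac{q^2+2q-4}{4q^2}\,d>18$, forcing the basket to be small: $\sum_{r_i\in\mathcal R_X}(r_i-\tfrac1{r_i})<6$. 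This leaves only finitely many baskets, hence finitely many values of $r_X=\lcm\{r_i\}$. Next, Corollary~\ref{cor.rangeofJ} combined with Lemma~\ref{lem J<9.5} confines $J_A$ to an explicit finite list, and Theorem~\ref{thm.key} supplies $J_A\mid q$ and $J_Ar_Xd/q^2\in\mathbb Z_{\ge1}$; the latter gives $d\ge q^2/(J_Ar_X)$, which with the Kawamata--Miyaoka upper bound $d<96$ and the finiteness of $J_A,r_X$ bounds $q$ from above. One is thus left with finitely many tuples $(q,r_X,B_X)$, and for each I would test integrality of $h^0(X,-K_X)$ through Reid's Riemann--Roch formula \eqref{eq.RR-Fano}: a denominator/parity obstruction eliminates the surviving cases. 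For instance the basket $\{(2,1)\}$ alone forces $h^0(X,-K_X)=\tfrac{2d+11}{4}\notin\mathbb Z$, since $2d\in\mathbb Z$ is even. This contradicts $d\ge 72$. Finally, the ``if'' direction is the direct computation $(-K_{\mathbb P(1,1,1,3)})^3=(-K_{\mathbb P(1,1,4,6)})^3=72$.

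The main obstacle is this last step: organizing the finite but delicate arithmetic so that every admissible $(q,r_X,B_X,J_A)$ is excluded in the non-Gorenstein case. The decisive new input is the improved bound of Theorem~\ref{thm.kmineqfor3fold}, which pins $c_2(X)\cdot c_1(X)$ close to $24$ and thereby forces the singularity basket to be extremely restricted; what remains is a balancing act between the Fano-index divisibility of Theorem~\ref{thm.key}, the bound on $J_A$ from Corollary~\ref{cor.rangeofJ}, and the integrality of Reid's Riemann--Roch, together with the verification that the presence of torsion or of non-Gorenstein singularities drops the degree strictly below $72$.
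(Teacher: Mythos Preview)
Your overall architecture matches the paper's proof almost exactly: reduce to $q\ge 7$ via Lemma~\ref{lem.upbisoforq<=6}, pass to torsion-free $\Cl(X)$ via Lemma~\ref{lem modification}, dispose of the Gorenstein case by Prokhorov, and in the non-Gorenstein case use Theorem~\ref{thm.kmineqfor3fold} with Theorem~\ref{thm.diffcandgc} and \eqref{eq.range} to force $\sum_{r_i\in\mathcal R_X}(r_i-\tfrac1{r_i})<6$, leaving a short list of baskets. The divisibility from Theorem~\ref{thm.key} and the constraint on $J_A$ from Corollary~\ref{cor.rangeofJ} are also the right ingredients.

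The gap is in your final elimination step. You propose to kill the surviving tuples by testing integrality of $h^0(X,-K_X)$ via \eqref{eq.RR-Fano}, but this is precisely the constraint the paper uses to \emph{generate} the candidate values of $r_Xc_1(X)^3$ (its Table~1), not to eliminate them. Concretely, take $\mathcal R_X=\{3\}$, $B_X=\{(3,1)\}$, $r_X=3$, $r_Xc_1(X)^3=224$ (so $d=224/3$), and $q=J_A=7$. Then $J_A\mid q$, $J_A r_X d/q^2=32\in\mathbb Z$, $J_A=7$ lies in the list of Lemma~\ref{lem J<9.5}, the weaker bound \eqref{eq upper J1} reads $7-\tfrac17<8$ and is satisfied, and Reid's formula gives $h^0(X,-K_X)=\tfrac{224}{6}+3-\tfrac13=40\in\mathbb Z$. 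Every test you list passes; your scheme does not exclude this case. The paper kills it (and the other survivors in its Tables~2 and~4) with the \emph{$q$-dependent} inequality \eqref{eq upper J2} of Corollary~\ref{cor.rangeofJ}, which for $q=7$ gives $7-\tfrac17\le 64-\tfrac{59}{196}\cdot 224<0$, a contradiction. So the decisive final weapon is the sharpened Kawamata--Miyaoka bound applied once more, with the specific value of $q$, not Riemann--Roch integrality.

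Your illustrative example is also incorrect: for $B_X=\{(2,1)\}$ one has $r_X=2$ and the admissible values of $r_Xc_1(X)^3=2d$ coming from \eqref{eq.RR-Fano} are $145,149,\dots$, all \emph{odd}; thus $h^0(X,-K_X)=\tfrac{2d+11}{4}$ is an integer (e.g.\ $39$ for $2d=145$), contrary to your parity claim that $2d$ is even.
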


\begin{proof}We may assume that $(-K_X)^3\geq 72$.
By Lemma~\ref{lem.upbisoforq<=6}, we may assume that $\qQ(X)\geq 7$. By applying Lemma~\ref{lem modification} repeatedly, we may assume that $\Cl (X)$ is torsion-free and $q:=\qW(X)=\qQ(X)\geq 7$. 
 By \cite[Theorem~1.5]{Prokhorov2005}, we may assume that $X$ is not Gorenstein. 
 Then Theorems~\ref{thm.kmineqfor3fold} and~\ref{thm.diffcandgc} imply that
 \[
 18\leq \frac{1}{4}c_1(X)^3< c_2(X) \cdot c_1(X)\stackrel{\eqref{eq.range}}{=} 24-\sum_{r_i\in \mathcal{R}_X}(r_i-\frac{1}{r_i}). 
 \]
 That is, $\sum_{r_i\in \mathcal{R}_X}(r_i-\frac{1}{r_i})<6$. 
 So there are in total 11 possibilities for $\mathcal R_X$: 
 \[
 \mathcal R_X \in \{ \{2\}, \{3\},\{4\},\{5\},\{6\},\{2,2\},\{2,3\},\{2,4\},\{3,3\},\{2,2,2\},\{2,2,3\}\}.
 \]
 For each $\mathcal{R}_X$, the corresponding $B_X$ consists of some of $(2,1)$, $(3,1)$, $(4,1)$, $(5,1)$, $(5,2)$, $(6,1)$.
So we can list all possibilities of $r_Xc_1^3$ satisfying \eqref{eq.RR-Fano} in Table~\ref{tab1}.

 {
\begin{longtable}{LLLLL}
\caption{}\label{tab1}\\
\hline
 \mathcal{R}_X & r_X& r_Xc_2c_1 & r_Xc_1^3 & r_Xc_2c_1-r_Xc_1^3/4\\
\hline
\endfirsthead
\multicolumn{4}{l}{{ {\bf \tablename\ \thetable{}} \textrm{-- continued}}}
\\
\hline 
 \mathcal{R}_X & r_X& r_Xc_2c_1 & r_Xc_1^3& r_Xc_2c_1-r_Xc_1^3/4\\
\endhead
\hline
\hline \multicolumn{4}{c}{{\textrm{Continued on next page}}} \\ \hline
\endfoot

\hline \hline
\endlastfoot
 \{2\} & 2& 45 & 145, 149,153,157,161,165,169,173,177 & \leq 8.75\\
 \{3\}& 3& 64 & 218, 224, 230, 236, 242, 248, 254 & \leq 9.5\\
 \{4\}& 4& 81 & 291, 299, 307, 315, 323 & \leq 8.25\\
 \{5\}& 5& 96 &364, 366, 374, 376 & \leq 5\\
 \{6\}& 6& 109 & \text{NO}\\
 \{2,2\}& 2& 42 & 146, 150, 154, 158, 162, 166& \leq 5.5\\
 \{2,3\}& 6& 119 & 439, 451, 463, 475& \leq 9.25\\
 \{2,4\}& 4& 75 & 293& 1.75\\
 \{3,3\}& 3& 56&220 & 1\\
 \{2,2,2\}&2& 39 & 147, 151, 155& \leq 2.25\\
 \{2,2,3\}&6& 110 & \text{NO} 
\end{longtable}
} 

Let $A$ be an ample Weil divisor generating $\Cl (X)$. Take $J_A$ to be the smallest positive integer such that $J_AA$ is Cartier in codimension $2$. Let $J_A=p_1^{a_1}p_2^{a_2}\cdots p_k^{a_k}$ be the prime factorization, where $p_i$ are distinct prime numbers.
Then by the last column of Table~\ref{tab1} and Corollary~\ref{cor.rangeofJ}, $J_A$ sastifies Lemma~\ref{lem J<9.5}.

If $J_A=q$, then we have 
\begin{itemize}
 \item $J_A=q\geq 7$ is a factor of $r_Xc_1(X)^3$ by Theorem~\ref{thm.key};
 \item $J_A\in \{7,8,9,10,12,14,15,20,30\}$ 
 by Lemma~\ref{lem J<9.5}, which implies that $\sum_{i=1}^k(p_i^{a_i}-\frac{1}{p_i^{a_i}})>6$;
 \item $r_Xc_2(X) \cdot c_1(X)- \frac{r_X}{4}c_1(X)^3>6$ by Corollary~\ref{cor.rangeofJ}.
\end{itemize}
All possibilities in Table~\ref{tab1}
satisfying these three properties
are picked out in Table~\ref{tab2}. But all of them contradict \eqref{eq upper J2}.

 {
\begin{longtable}{LLLLL}
\caption{}\label{tab2}\\
\hline
 \mathcal{R}_X & r_X& r_Xc_2c_1 & r_Xc_1^3 & q\\
\hline
\endfirsthead
\multicolumn{4}{l}{{ {\bf \tablename\ \thetable{}} \textrm{-- continued}}}
\\
\hline 
 \mathcal{R}_X & r_X& r_Xc_2c_1 & r_Xc_1^3 &  q\\
 \hline 
\endhead
\hline
\hline \multicolumn{4}{c}{{\textrm{Continued on next page}}} \\ \hline
\endfoot

\hline \hline
\endlastfoot
 \{2\} & 2& 45 & 153 & 9 \\
 \{3\}& 3& 64 & 224 & 7, 8, 14\\
  \{3\}& 3& 64 &  230 & 10  
\end{longtable}
} 

If $J_A\neq q$, then by Theorem~\ref{thm.key},
\begin{itemize}
 \item $r_Xc_1(X)^3$ contains a square factor $(\frac{q}{J_A})^2$;
 \item $J_A$ is a factor of $r_Xc_1(X)^3/(\frac{q}{J_A})^2$ satisfying Lemma~\ref{lem J<9.5}.
\end{itemize}
 All possibilities in Table~\ref{tab1} with these two properties are listed in Table~\ref{tab3}, where we cross-out $J_A$ which does not satisfy 
\eqref{eq upper J1}.
Then we can pick out those possibilities with $q\geq 7$ in Table~\ref{tab4}, but all of them contradict \eqref{eq upper J2}.
 
{
\begin{longtable}{LLLLLLL}
 \caption{}\label{tab3}\\
 \hline
 \mathcal{R}_X & r_X& r_Xc_2c_1 & r_Xc_1^3 & q/J_A& J_A & r_Xc_2c_1-r_Xc_1^3/4\\
 \hline
 \endfirsthead
 \multicolumn{4}{l}{{ {\bf \tablename\ \thetable{}} \textrm{-- continued}}}
 \\
 \hline 
 \mathcal{R}_X & r_X& r_Xc_2c_1 & r_Xc_1^3 & q/J_A& J_A & r_Xc_2c_1-r_Xc_1^3/4\\
 \endhead
 \hline
 \hline \multicolumn{4}{c}{{\textrm{Continued on next page}}} \\ \hline
 \endfoot
 
 \hline \hline
 \endlastfoot
 \{2\} & 2& 45 & 153=3^2\cdot 17 & 3& 1 \\
 \{2\} & 2& 45 & 169=13^2 & 13& 1 \\
 \{3\}& 3& 64 & 224=2^5\cdot 7 & 2& 1, 2, 4, 7, 8, \cancel{14} & 8\\
 \{3\}& 3& 64 & 224=2^5\cdot 7 & 4& 1, 2, 7, \cancel{14}& 8\\
 \{3\}& 3& 64 & 236=2^2 \cdot 59 &2 & 1\\ \{3\}& 3& 64 & 242=2\cdot 11^2 &11& 1,2\\ 
 \{3\}& 3& 64 & 248=2^3\cdot 31 &2 & 1,2\\
 \{4\}& 4& 81 & 315=3^2\cdot 5\cdot 7 &3 & 1,\cancel{5},\cancel{7} & 2.25\\
 \{5\}& 5& 96 &364=2^2\cdot 7\cdot 13 &2& 1,\cancel{7} & 5\\
 \{5\}& 5& 96 & 376=2^3\cdot 47 &2& 1,2 \\
 \{2,2\}& 2& 42 & 150=2\cdot 3\cdot 5^2 &5 & 1,2,3,6 \\
 \{2,2\}& 2& 42 & 162=2\cdot 3^4 & 3& 1,\cancel{2},\cancel{3},\cancel{6},\cancel{9} & 1.5\\
 \{2,2\}& 2& 42 & 162=2\cdot 3^4 & 9& 1,\cancel{2}& 1.5\\
 \{2,3\}& 6& 119 & 475=5^2\cdot 19 & 5& 1\\
 \{3,3\}& 3& 56&220=2^2\cdot 5\cdot 11 & 2& 1,\cancel{5} & 1\\
 \{2,2,2\}&2& 39 & 147=3\cdot 7^2 &7& 1,\cancel{3} & 2.25
\end{longtable}
} 
 
{
\begin{longtable}{LLLLL}
 \caption{}\label{tab4}\\
 \hline
 \mathcal{R}_X & r_X& r_Xc_2c_1 & r_Xc_1^3 & (q, J_A) \\
 \hline
 \endfirsthead
 \multicolumn{4}{l}{{ {\bf \tablename\ \thetable{}} \textrm{-- continued}}}
 \\
 \hline 
 \mathcal{R}_X & r_X& r_Xc_2c_1 & r_Xc_1^3 & (q, J_A) \\
 \endhead
 \hline
 \hline \multicolumn{4}{c}{{\textrm{Continued on next page}}} \\ \hline
 \endfoot
 
 \hline \hline
 \endlastfoot
 \{2\} & 2& 45 & 169=13^2 & (13, 1) \\
 \{3\}& 3& 64 & 224=2^5\cdot 7 & (8,4), (14, 7), (16, 8)\\
 \{3\}& 3& 64 & 224=2^5\cdot 7 & (8, 2), (28, 7)\\
 \{3\}& 3& 64 & 242=2\cdot 11^2 &(11, 1), (22,2)\\ 
 \{2,2\}& 2& 42 & 150=2\cdot 3\cdot 5^2 &(10, 2), (15, 3), (30, 6) \\
 \{2,2\}& 2& 42 & 162=2\cdot 3^4 & (9, 1)\\
 \{2,2,2\}&2& 39 & 147=3\cdot 7^2 &(7, 1)
\end{longtable}
} 
\end{proof}

\begin{cor}
 Let $X$ be a canonical weak Fano $3$-fold with $\qQ(X)\geq 7$. Then $(-K_X)^3\leq 72$.
\end{cor}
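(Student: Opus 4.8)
The plan is to reduce the weak-Fano case to the Fano case of Picard number one that has already been settled in Theorem~\ref{thm.sharpboundofdegree}. The key observation is that Proposition~\ref{prop modification} was designed precisely for this: starting from a canonical \emph{weak} Fano $3$-fold $X$ with $\qQ(X)\geq 7$, it produces a $3$-fold $Y$ which is a $\mathbb{Q}$-factorial canonical Fano $3$-fold of Picard number $1$, with $\Cl(Y)$ torsion-free, $\qW(Y)=\qQ(Y)\geq \qQ(X)\geq 7$, and most importantly $(-K_Y)^3\geq (-K_X)^3$. Thus the degree can only go up under this modification.

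The argument then proceeds as follows. First I would invoke Proposition~\ref{prop modification} to obtain such a $Y$ from the given $X$, using the hypothesis $\qQ(X)\geq 7$. Since $Y$ is a $\mathbb{Q}$-factorial canonical Fano $3$-fold of Picard number $1$, Theorem~\ref{thm.sharpboundofdegree} applies directly to $Y$ and gives $(-K_Y)^3\leq 72$. Chaining the two inequalities yields
\[
(-K_X)^3\leq (-K_Y)^3\leq 72,
\]
which is exactly the desired conclusion.

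Because all the substantive work is packaged inside Proposition~\ref{prop modification} and Theorem~\ref{thm.sharpboundofdegree}, there is no genuine obstacle left at this stage; the corollary is essentially a one-line deduction. If anything, the only point that deserves a moment's care is confirming that the hypotheses of Proposition~\ref{prop modification} are met verbatim—namely that $X$ is a canonical weak Fano $3$-fold with $\qQ(X)\geq 7$, which is precisely what is assumed in the corollary—so that the passage to $Y$ is legitimate. Once that is noted, the inequality $(-K_X)^3\leq(-K_Y)^3$ from conclusion~(4) of the proposition and the bound $(-K_Y)^3\leq 72$ from the theorem combine immediately.
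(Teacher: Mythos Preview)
Your proof is correct and matches the paper's own argument exactly: the paper simply states that the corollary follows directly from Proposition~\ref{prop modification} and Theorem~\ref{thm.sharpboundofdegree}, which is precisely the chain $(-K_X)^3\leq(-K_Y)^3\leq 72$ you describe.
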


 \begin{proof}
 This directly follows from Proposition~\ref{prop modification} and Theorem~\ref{thm.sharpboundofdegree}.
 \end{proof}
 
As a corollary, we can solve the remaining case in \cite[Main~Theorem]{Lai2021} for $\mathbb{Q}$-factorial terminal weak Fano $3$-folds of Picard number $2$.

\begin{cor}[{cf. \cite[Main~Theorem]{Lai2021}}]\label{cor.pic2}
 Let $X$ be a $\mathbb{Q}$-factorial terminal weak Fano $3$-fold of Picard number $2$. Then $(-K_X)^3\leq 72$.
\end{cor}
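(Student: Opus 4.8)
The plan is to deduce the statement formally from \cite[Main~Theorem]{Lai2021} together with the canonical weak Fano bound for large index established immediately above. Lai's argument already gives $(-K_X)^3\le 72$ for every $\mathbb{Q}$-factorial terminal weak Fano $3$-fold of Picard number $2$ outside of a single exceptional configuration, and the whole content of the corollary is that this last configuration can now be ruled out. So I would first invoke \cite[Main~Theorem]{Lai2021} to reduce to the case where $X$ lies in this remaining configuration, and then trace through Lai's extremal-contraction analysis to read off the numerical consequence it imposes on $X$.

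Before doing so, I would record why the reduction to Picard number $1$ is essentially free with the present tools. Since $\rho(X)=2$, the cone $\NE(X)$ has exactly two extremal rays, and at least one of them is $K_X$-negative because $-K_X$ is big. Suppose a divisorial contraction of such a ray produces a $3$-fold $X_0$ of Picard number $1$. Then $X_0$ is again $\mathbb{Q}$-factorial (divisorial contractions of extremal rays preserve $\mathbb{Q}$-factoriality) and terminal, and on a variety of Picard number $1$ a nef and big anticanonical class is automatically ample, so $X_0$ is a $\mathbb{Q}$-factorial canonical Fano $3$-fold of Picard number $1$; thus Theorem~\ref{thm.sharpboundofdegree} applies directly. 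The discrepancy bookkeeping $K_X=f^*K_{X_0}+\sum a_iE_i$ with $a_i\ge 0$ forces $(-K_{X_0})^3\ge (-K_X)^3$, and we are done. Consequently the only genuinely new input is required precisely when the reduction leaves the $\mathbb{Q}$-factorial terminal category -- for instance through a small ($-K_X$-trivial) contraction, which does not preserve $\mathbb{Q}$-factoriality -- and this is exactly the situation isolated as Lai's remaining case.

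The key point is then that in Lai's remaining case the reduction yields a \emph{canonical weak Fano} $3$-fold $X_0$, no longer necessarily $\mathbb{Q}$-factorial, with $(-K_{X_0})^3\ge (-K_X)^3$ and with large $\mathbb{Q}$-Fano index $\qQ(X_0)\ge 7$. I would invoke the corollary immediately preceding this one, whose whole purpose is that the bound $(-K_{X_0})^3\le 72$ holds for \emph{arbitrary} Picard number and \emph{without} any $\mathbb{Q}$-factoriality hypothesis as soon as $\qQ\ge 7$; indeed Proposition~\ref{prop modification} first replaces $X_0$ by a $\mathbb{Q}$-factorial canonical Fano model of Picard number $1$ with torsion-free class group and index still at least $7$, to which Theorem~\ref{thm.sharpboundofdegree} then applies. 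Chaining the two inequalities gives $(-K_X)^3\le (-K_{X_0})^3\le 72$.

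The hard part will be matching conventions with \cite{Lai2021}: one must verify that the variety emerging from Lai's remaining case really is a canonical weak Fano threefold with $\qQ\ge 7$ -- as opposed to, say, a del Pezzo fibration or conic bundle, for which a separate direct estimate of $(-K_X)^3$ would be needed -- and that the anticanonical degree genuinely does not decrease along the intervening flips and contractions. Once these compatibility checks are in place, the corollary follows formally by combining \cite[Main~Theorem]{Lai2021} with the preceding corollary, and no further geometric input is required.
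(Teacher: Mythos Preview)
Your reduction to Lai's remaining case is correct, but your treatment of that case has a genuine gap. You assert that the reduction produces a canonical weak Fano $X_0$ with $\qQ(X_0)\geq 7$, intending then to invoke the preceding corollary. But nothing in Lai's setup forces a large index: his remaining case is characterised purely by the geometry of the extremal contraction, not by any arithmetic condition on $-K$. You yourself flag that ``one must verify'' the bound $\qQ\geq 7$, yet you never do so, and in fact there is no reason for it to hold. Without that hypothesis, the preceding corollary does not apply and your argument does not close.

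The paper's argument is both simpler and different in kind, and it sidesteps the index issue entirely. Lai's remaining case is precisely that $X$ admits a $K$-trivial extremal contraction $f\colon X\to Y$. Here $Y$ is a $\mathbb{Q}$-factorial canonical Fano $3$-fold of Picard number $1$, and because the contraction is crepant one has $K_X=f^*K_Y$, hence $(-K_Y)^3=(-K_X)^3$ (equality, not merely the inequality you write). Theorem~\ref{thm.sharpboundofdegree} applied directly to $Y$ then gives $(-K_X)^3=(-K_Y)^3\leq 72$. Your worry that the target might fail to be $\mathbb{Q}$-factorial, and your attempted workaround via the $\qQ\geq 7$ corollary, are therefore both unnecessary; the second paragraph of your proposal, while correct, is also redundant, as those subcases are already absorbed into Lai's theorem.
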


\begin{proof}
 In \cite[Main~Theorem]{Lai2021}, the remaining case is when $X$ admits a $K$-trivial extremal contraction $X\to Y$. In this case, $Y$ is a $\mathbb Q$-factorial canonical Fano $3$-fold of Picard number $1$ and $(-K_Y)^3=(-K_X)^3$, so it follows from Theorem~\ref{thm.sharpboundofdegree}.
\end{proof}

Right after this paper putting on arXiv, the authors are informed kindly by  Ching-Jui Lai that the remaining case in \cite[Main~Theorem]{Lai2021} has been settled down by him and his colleagues. Their approach develops the two rays method in \cite{Lai2021}, which is different from ours in Corollary \ref{cor.pic2}.

\begin{rem}
In the proof of Theorem~\ref{thm.sharpboundofdegree}, we use \cite[Theorem~1.5]{Prokhorov2005} to treat the Gorenstein case. However, if we just want to prove the upper bound $(-K_X)^3\leq 72$ without the characterization of the equality case, we can avoid the use of \cite[Theorem~1.5]{Prokhorov2005} by the following modification in the proofs of Lemma~\ref{lem.upbisoforq<=6} and Theorem~\ref{thm.sharpboundofdegree}.

From now on, suppose that $X$ is Gorenstein and $c_1(X)^3>72$. Take $q=\qQ(X)$.  Let $A$ be a Weil divisor such that $-K_X\equiv qA$. 
Recall that if $X$ is Gorenstein, then $c_2(X)\cdot c_1(X)=24$ and $c_1(X)^3$ is an even integer by Theorem~\ref{thm.euler}. In particular, $c_1(X)^3\geq 74.$

(1) In the proof of Lemma~\ref{lem.upbisoforq<=6}, we need to further treat the case that $q\leq 6$ and $X$ is Gorenstein. 

If $\Sing(X)$ contains a curve, then Theorems~\ref{thm.kmineqfor3fold} and~\ref{thm.diffcandgc} imply that
\[
 \frac{11}{36}c_1(X)^3\leq c_2(X)\cdot c_1(X)-\sum_{C\subset \text{\rm Sing}(X)}\left(e_C-\frac{1}{g_C}\right)(c_1(X)\cdot C)  \leq 24-(2-\frac{1}{2})= 22.5.
 \]
 In particular, $c_1(X)^3<74$. This is absurd.

If $\Sing(X)$ contains no curve, 
then the above inequality gives 
\[
 \frac{11}{36}c_1(X)^3\leq c_2(X)\cdot c_1(X)=24,
 \]
 which implies that $c_1(X)^3\in\{74,76, 78\}$. 
 On the other hand, by Lemma~\ref{lem A2K even}, $\frac{1}{q^2}c_1(X)^3=(A^2\cdot K_X)$ is an even integer. This means that $q=1$ and then \cite[Proposition 3.6]{LiuLiu2023} says that $-c_1(\mathcal{F})$ is nef for any proper subsheaf $\mathcal{F}$ of $\mathcal{T}_X$. So $\mathcal{T}_X$ is stable and the $\mathbb{Q}$-Bogomolov--Gieseker inequality implies 
\[
c_1(X)^3\leq 3\hat{c}_2(X)\cdot c_1(X) = 3c_2(X)\cdot c_1(X) = 72,
\]
which is a contradiction.

 (2) In the proof of Theorem~\ref{thm.sharpboundofdegree}, we need to further treat the case that $q=\qW(X)=\qQ(X)\geq 7$ and $X$ is Gorenstein. 

 Let $A$ be an ample Weil divisor generating $\Cl (X)$. Take $J_A$ to be the smallest positive integer such that $J_AA$ is Cartier in codimension $2$. Let $J_A=p_1^{a_1}p_2^{a_2}\cdots p_k^{a_k}$ be the prime factorization, where $p_i$ are distinct prime numbers.

Then by  Corollary~\ref{cor.rangeofJ},
\[\sum_{i=1}^k\left(p_i^{a_i}-\frac{1}{p_i^{a_i}}\right)< c_2(X) \cdot c_1(X)- \frac{1}{4}c_1(X)^3\leq 5.5.\]
This implies that $J_A\leq 6$ by direct computation. Also this inequality gives $c_1(X)^3<96$. So $c_1(X)^3\in \{74, 76, \dots, 94\}.$
As $J_A\neq q$, by Theorem~\ref{thm.key},
\begin{itemize}
 \item $c_1(X)^3$ contains a square factor $(\frac{q}{J_A})^2$;
 \item $J_A\leq 6$ is a factor of $c_1(X)^3/(\frac{q}{J_A})^2$.
\end{itemize}
 All possibilities with these two properties are listed in Table~\ref{tab5}, where we cross-out $J_A$ which does not satisfy 
\eqref{eq upper J1}. Hence the only possible case with $q\geq 7$ is $(c_1^3, q, J_A)=(80, 8, 4)$,  but this  contradicts \eqref{eq upper J2}.

{
\begin{longtable}{LLLL}
 \caption{}\label{tab5}\\
 \hline
 c_1^3 & q/J_A& J_A & c_2c_1-c_1^3/4\\
 \hline
 \endfirsthead
 \multicolumn{4}{l}{{ {\bf \tablename\ \thetable{}} \textrm{-- continued}}}
 \\
 \hline 
  c_1^3 & q/J_A& J_A & c_2c_1-c_1^3/4\\
  \hline 
 \endhead
 \hline
 \hline \multicolumn{4}{c}{{\textrm{Continued on next page}}} \\ \hline
 \endfoot
 
 \hline \hline
 \endlastfoot
 76=2^2\cdot 19 & 2& 1& \\
  80=2^4\cdot 5 &  2& 1, 2, {4}, \cancel{5}& 4\\
    80=2^4\cdot 5 & 4 & 1, \cancel{5} & 4\\
 84=2^2\cdot 3\cdot 7 & 2& 1,3 & \\
88=2^3\cdot 11 & 2 & 1, 2 & \\
90=2\cdot 3^2\cdot 5 & 3& 1,2, \cancel{5} & 1.5\\
 92=2^2\cdot 23 &2 & 1,2 & 
\end{longtable}
} 

\end{rem}



   

\begin{lem}\label{lem A2K even}
   Let $X$ be a Gorenstein canonical Fano $3$-fold.
   Suppose that $\Sing(X)$ contains no curve. Then for any $\mathbb{Q}$-Cartier Weil divisor $D$ on $X$, $(D^2\cdot K_X)$ is an even integer.  
\end{lem}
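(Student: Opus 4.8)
The plan is to reduce the parity statement to the classical fact that on a smooth projective surface $S$ one has $C\cdot(C-K_S)\in 2\mathbb{Z}$ for every Cartier divisor $C$, which is immediate from the surface Riemann--Roch formula $\chi(\mathcal{O}_S(C))=\chi(\mathcal{O}_S)+\tfrac12 C\cdot(C-K_S)$. First I would record that, since $\Sing(X)$ contains no curve, it is a finite set; hence $K_X$ is Cartier (Gorenstein), $D$ is Cartier away from finitely many points and in particular Cartier in codimension $2$, so $(D^2\cdot K_X)$ and $(D\cdot K_X^2)$ are integers by Lemma~\ref{lem DD is integer}. The strategy is then to cut $X$ by a general member of a suitable multiple of $-K_X$ and transport the surface parity to $X$.

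Next I would choose the surface. Since $-K_X$ is ample, $\mathcal{O}_X(-mK_X)$ is very ample for all $m\gg 0$ (writing $-mK_X$ as the sum of a very ample multiple and a globally generated multiple), so I may fix an \emph{odd} integer $m$ with $-mK_X$ very ample. Embedding $X$ by $|-mK_X|$ and using that $\Sing(X)$ is finite together with Bertini's theorem, a general member $S\sim -mK_X$ is an irreducible smooth surface disjoint from $\Sing(X)$; thus $S\subset X_{\mathrm{reg}}$ and $D|_S$ is a genuine Cartier divisor on $S$. Applying the surface parity to $C=D|_S$ gives that $D|_S\cdot(D|_S-K_S)$ is even.

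Finally I would expand this using adjunction $K_S=(K_X+S)|_S$ and $S\sim -mK_X$, together with the projection formula $(D^2\cdot S)=(D|_S)^2$ and its analogues, to obtain
\[
D|_S\cdot(D|_S-K_S)=(D^2\cdot S)-(D\cdot K_X\cdot S)-(D\cdot S^2)=-m\,(D^2\cdot K_X)+(m-m^2)(D\cdot K_X^2).
\]
Since $m-m^2=m(1-m)$ is even and $(D\cdot K_X^2)\in\mathbb{Z}$, the last term is even; as the left-hand side is even, so is $m\,(D^2\cdot K_X)$, and the oddness of $m$ forces $(D^2\cdot K_X)$ to be even, as desired.

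The main points to be careful about are: (i) producing an \emph{odd} $m$ with $-mK_X$ very ample, which is why I invoke eventual very ampleness of all high multiples rather than a single unspecified one; and (ii) justifying that the general $S$ is smooth and avoids the isolated singularities, so that $D|_S$ is Cartier and the projection-formula identities relating intersection numbers on $X$ to those on $S$ are valid. Both are standard, and with them in hand the parity computation is routine.
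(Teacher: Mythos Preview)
Your proof is correct, and it takes a genuinely different route from the paper's. The paper works directly with a general $S\in|-K_X|$: by Reid's general elephant theorem \cite{Reid1983}, such $S$ has at worst Du Val singularities, so $(X,S)$ is canonical and $S$ meets $\Sing(X)$ only at \emph{terminal} points of $X$; then Kawamata's factoriality result \cite[Lemma~5.1]{Kawamata1988} ensures $D$ is Cartier there, hence $D|_S$ is Cartier on the K3 surface $S$, and Riemann--Roch on $S$ gives $(D|_S)^2\in 2\mathbb{Z}$ immediately since $K_S=0$.

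Your approach trades these two nontrivial inputs (general elephant and factoriality at terminal points) for a parity trick: by passing to $|-mK_X|$ with $m$ large and \emph{odd}, you get a smooth $S$ disjoint from the finite set $\Sing(X)$ via Bertini alone, and then extract the parity of $(D^2\cdot K_X)$ from $D|_S\cdot(D|_S-K_S)\in 2\mathbb{Z}$ together with the evenness of $m-m^2$. This is more elementary and self-contained; the paper's version is shorter once the cited results are granted, and uses the cleaner identity $(D^2\cdot(-K_X))=(D|_S)^2$ without any correction terms.
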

\begin{proof}
    By \cite{Reid1983}, a general element $S\in |-K_X|$ has at worst Du Val singularities, which means that $(X, S)$ is plt by the inversion of adjunction \cite[Theorem~5.50]{KollarMori1998}, which is equivalent to that $(X, S)$ is a canonical pair as $K_X+S\sim 0$ is Cartier. So $S\cap \Sing(X)$ contains only terminal singularities of $X$. As $D$ is Cartier near terminal singularities of $X$ by \cite[Lemma~5.1]{Kawamata1988}, $D|_S$ is a Cartier divisor on $S$. Hence $(D^2\cdot S)=(D|_S^2)$ is an even integer by the Riemann--Roch formula $\chi(S, D|_S)= \frac{1}{2}(D|_S^2)+2$, where $S$ is a K3 surface with Du Val singularities. 
\end{proof}

\section*{Acknowledgments} 
The authors would like to thank Yong Hu, Wenyou Li and Y.~G.~Prokhorov for helpful discussions. C.~Jiang was supported by National Key Research and Development Program of China (No. 2023YFA1010600, No. 2020YFA0713200) and NSFC for Innovative Research Groups (No. 12121001). C.~Jiang is a member of the Key Laboratory of Mathematics for Nonlinear Sciences, Fudan University. H.~Liu is supported in part by the National Key Research and Development Program of China (No. 2023YFA1009801). J.~Liu is supported by the Youth Innovation Promotion Association CAS, the CAS Project for Young Scientists in Basic Research (No. YSBR-033) and the NSFC grant (No. 12288201).

\bibliography{JLLcanonical}

\end{document}